\theoremstyle{plain}
\newtheorem{theorem}{Theorem}[section]
\newtheorem{lemma}[theorem]{Lemma}
\newtheorem{proposition}[theorem]{Proposition}
\newtheorem{corollary}[theorem]{Corollary}
\newtheorem{conjecture} [theorem] {Conjecture}
\theoremstyle{definition}
\newtheorem{definition}[theorem]{Definition}
\newtheorem{example}[theorem]{Example}
\newtheorem{remark}[theorem]{Remark}
\newcommand{\Man}{\operatorname{Man}}
\newcommand{\Top}{\operatorname{Top}}
\newcommand{\supp}{\operatorname{supp}}
\newcommand{\R}{{\mathbb{R}}}
\newcommand{\Z}{{\mathbb{Z}}}
\def\from{\leftarrow}
\def\xto{\xrightarrow}
\def\xfrom{\xleftarrow}
\def\toto{\rightrightarrows}
\def\id{{\rm id}}
\def\action{\curvearrowright}
\def\cosk{{\rm cosk}}
\def\sk{{\rm sk}}
\begin{document}

\title{\bf On the cohomology of differentiable stacks}
	\author{Matias del Hoyo \and Cristian Ortiz \and Fernando Studzinski}
	%
	%
	%

\date{}

	\maketitle

\begin{abstract}
Morita equivalence classes of Lie groupoids serve as models for differentiable stacks, which are higher spaces in differential geometry, generalizing manifolds and orbifolds. 
Representations up to homotopy of Lie groupoids provide a higher analog of classical representations and play a significant role in Poisson geometry.
In this paper, we prove that the cohomology with coefficients in a representation up to homotopy is a Morita invariant, and therefore an invariant of the underlying stack. This result was inspired by the 2-term case, previously developed by del Hoyo and Ortiz, and it relies on the simplicial approach to representations up to homotopy, recently introduced by del Hoyo and Trentinaglia. As a subsidiary result, we include a proof of the cohomological descent for higher Lie groupoids.
\end{abstract}

\tableofcontents


\section{Introduction} 


{\bf Lie groupoids} have shown to be useful in a variety of situations: as the global counterpart for Lie algebroids, as geometric models for noncommutative algebras, as unifying frameworks for classic geometries, and also as an intermediate step in constructing differentiable stacks. Each Lie groupoid $G$ gives rise to a {\bf differentiable stack} $[G]$, and two differentiable stacks are isomorphic, $[G]\cong[G']$, if and only if the corresponding Lie groupoids are {\bf Morita equivalent}. These equivalences can be described either by principal bi-bundles or by fractions of Morita fibrations or {\bf hypercovers} $\phi:G'\to G$. Thus, the geometry of differentiable stacks is determined by the properties and invariants of a Lie groupoid which are preserved by hypercovers. 


We can canonically identify a Lie groupoid with a simplicial manifold via its nerve. The {\bf (differentiable) cohomology} $H(G)$ is one of the main invariants of Lie groupoids, and it naturally arises from the simplicial structure. 
One can work with coefficients in a {\bf representation} $R:G \action E$ on a vector bundle over the units $E\to G_0$, and define the cohomology with {\bf coefficients} $H(G, E)$, which generalizes the previous one. Given $\phi:G'\to G$ a morphism, we can consider the {\bf pullback} representation $\phi^*R:G'\action \phi^*E$, and the induced morphism $H(G,E)\to H(G',\phi^*E)$. 
A fundamental result of the theory of Lie groupoids, originally proven by M. Crainic via bi-bundles, is the Morita invariance of cohomology:

\begin{theorem}[\cite{crainic}]\label{thm:1}
If $G',G$ are Lie groupoids, $\phi:G'\to G$ is a hypercover and $G\action E$ is a representation, then 
$\phi^*:H(G,E)\to H(G',\phi^*E)$ is an isomorphism.
\end{theorem}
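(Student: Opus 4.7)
The plan is to work entirely within the simplicial framework, reducing the statement to a cohomological descent theorem for hypercovers of simplicial manifolds with coefficients in a simplicial vector bundle.

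First I would encode the pair $(G, E)$ simplicially. The nerve $NG$ is the simplicial manifold whose $n$-simplices are composable $n$-chains of arrows in $G$; the representation $R : G \action E$ corresponds to a simplicial vector bundle $\mathcal{E} \to NG$, built by pulling $E$ back over each $N_n G$ and twisting the face maps by $R$. One then checks that the differentiable cohomology $H(G, E)$ is computed by the cochain complex $C^n(NG, \mathcal{E}) = \Gamma(N_n G, \mathcal{E}_n)$ with the alternating sum differential, and that the morphism $\phi : G' \to G$ induces the map $\phi^*$ of the statement on these cochains.

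Next I would translate the hypothesis. The condition that $\phi$ is a hypercover of Lie groupoids unwinds into the condition that $N\phi : NG' \to NG$ is a hypercover of simplicial manifolds in the standard sense: the matching maps $N_n G' \to N_n G \times_{M_n NG} M_n NG'$ are surjective submersions for every $n \geq 0$. Moreover $(N\phi)^* \mathcal{E}$ is canonically the simplicial vector bundle associated to the pullback representation $\phi^* R$, so the theorem reduces to the assertion that such a simplicial hypercover induces an isomorphism on cohomology with simplicial vector bundle coefficients.

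The final step is to invoke the cohomological descent theorem for higher Lie groupoids announced in the abstract. The natural route is via the \v{C}ech bisimplicial object of $N\phi$: form the double complex of sections with coefficients in the pulled-back bundle and run two spectral sequences. One filtration collapses to $H(G, E)$ using acyclicity of the \v{C}ech complex of a surjective submersion with values in a vector bundle; the other, applied levelwise in combination with the matching conditions of the hypercover, collapses to $H(G', \phi^* E)$. Both spectral sequences converge to the total cohomology of the double complex, and $\phi^*$ is identified with the resulting edge map, so it is an isomorphism. The main obstacle is precisely this descent step: establishing acyclicity of the \v{C}ech complex for surjective submersions on sections of a vector bundle, and then bootstrapping this levelwise fact through the matching conditions while keeping the bisimplicial combinatorics and convergence under control. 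This is exactly what the paper's subsidiary cohomological descent theorem is designed to provide, and it is the technical heart of the argument.
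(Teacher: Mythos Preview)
The paper does not give a stand-alone proof of this theorem; it is cited from \cite{crainic}. The relevant comparison is to the paper's proof of Theorem~\ref{thm:main}, which specializes to this statement when $E=E_0$.

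Your route and the paper's diverge at the key step, and there is a genuine mismatch in what you invoke. You want to appeal to ``the paper's subsidiary cohomological descent theorem'' for sections of a simplicial vector bundle $\mathcal{E}\to NG$. But Theorem~\ref{thm:higher-descent} is stated and proven only for \emph{trivial} coefficients, i.e.\ for the cochain algebra $C(X)$ of smooth functions, and its proof is not a \v{C}ech/spectral-sequence argument at all: it goes via the coskeletal tower, integration along the fibers using a vertical density, and the relative path space. So you cannot simply cite it for your $\mathcal{E}$-valued complex.

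The paper's bridge from coefficients to trivial coefficients is quite different from your proposal. It passes through the semi-direct product $G\ltimes E^*$: one identifies $H(G,E)$ with the linear cohomology $H_{lin}(G\ltimes E^*)$, which by Lemma~\ref{lemma:splitting} is a natural direct summand of the full $H(G\ltimes E^*)$, and then applies Theorem~\ref{thm:higher-descent} to the hypercover $G'\ltimes\phi^*E^*\to G\ltimes E^*$ (Corollary~\ref{coro:cohomology-hypercover}). For an ordinary representation this specialization is particularly clean: $E^*$ sits in degree $0$, the only index is $\iota_0$, so projectable equals linear and the hard work of \S4--5 becomes trivial.

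Your \v{C}ech bisimplicial strategy is essentially Crainic's original approach and is sound in principle, but then it is self-contained and does not use the paper's descent theorem. The part you gloss over---the second spectral sequence ``collapsing to $H(G',\phi^*E)$'' via ``matching conditions''---is exactly where the work lies; for Lie groupoid hypercovers the simplification comes from the explicit pullback description $G'\cong\phi_0^*G$, which you should exploit directly rather than invoking abstract matching conditions. Either carry that argument through on its own, or adopt the paper's reduction via the semi-direct product.
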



Lie groupoid representations on vector bundles are too strict. A key generalization, introduced by C. Arias Abad and M. Crainic  \cite{ac}, is that of a {\bf representation up to homotopy} $R: G\action E$ on a (bounded) graded vector bundle $E=\bigoplus_{n\in\Z}E_n\to G_0$. 
In such a representation, each object is associated with a chain complex, each arrow with a chain map, and each commutative triangle with a chain homotopy. 
Then the cohomology $H(G,E)$ with coefficients in a representation up to homotopy is defined.
When $E=E_0$ this recovers the usual representations, and when $E=E_1\oplus E_0$ this is intimately related to the theory of {\bf VB-groupoids}, double structures mixing vector bundles and Lie groupoids \cite{mackenzie}. A representation up to homotopy $R:G \action (E_1\oplus E_0)$ gives rise to a VB-groupoid via the Grothendieck construction or {\bf semi-direct product} $G\ltimes_RE\to G$, and this yields an equivalence of categories \cite{gsm,dho}.


The present paper addresses the natural problem of studying the Morita invariance of cohomology with coefficients in representations up to homotopy. This has previously been achieved in the 2-term case in \cite{dho}, which is an inspiration and a motivation for the present work. Let us review the arguments there. If $V\to G$ is a VB-groupoid, the cochains of the total Lie groupoid $V$ admit two distinguished subcomplexes: the {\bf linear cochains} $C_{lin}(V)\subset C(V)$ are those which are fiberwise linear, and the {\bf projectable cochains} $C_{proj}(V)\subset C_{lin}(V)$ are those $c$ such that $c(v)=0$ and $\delta(c)(v')=0$ whenever $v,v'$ are chains of arrows in $V$ starting with a zero. The following are the key results regarding linear and projectable cochains:
\begin{enumerate}[a)]
    \item Given $V\to G$ a VB-groupoid, there is a cochain isomorphism $C(G,E^*)\cong C_{proj}(V)$, where $R:G\action E$ is the 2-term representation up to homotopy arising from $V$. \cite[Thm 5.6]{gsm};
    \item Given $V\to G$ a VB-groupoid, the inclusion $\nu:C_{proj}(V)\to C_{lin}(V)$ is a quasi-isomorphism \cite[Lemma 3.1]{cd};
    \item A linear hypercover $\phi:V'\to V$ between VB-groupoids yields an isomorphism between the linear cohomologies $\phi^*:H_{lin}(V)\cong H_{lin}(V')$ \cite[Thm 4.2]{dho}.
\end{enumerate}

There is no shift in a), unlike the original \cite{gsm}, because the dual is taken at the level of representations and not of VB-groupoids. The above results easily combine to give the following:

\begin{theorem}[\cite{dho}]
If $G',G$ are Lie groupoids, $\phi:G'\to G$ is a hypercover and $G\action E=E_1\oplus E_0$ is a representation up to homotopy concentrated in degrees 1 and 0
, then 
$\phi^*:H(G,E)\to H(G',\phi^*E)$ is an isomorphism.
\end{theorem}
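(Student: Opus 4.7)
The plan is to combine ingredients (a), (b), and (c) recalled above via the equivalence between $2$-term representations up to homotopy and VB-groupoids, transferring the problem into the linear-cohomology machinery of VB-groupoids where the hypercover invariance of (c) can be applied directly.

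Given the representation $R: G \action E$ concentrated in degrees $1$ and $0$, I would first pass to the dual representation $R^*: G \action E^*$ and form the associated VB-groupoid $V := G \ltimes_{R^*} E^* \to G$ via the semi-direct product. Because of the duality convention in (a), this choice produces a cochain isomorphism $C(G,E) \cong C_{proj}(V)$, and analogously $C(G',\phi^*E) \cong C_{proj}(V')$ for $V' := G' \ltimes_{\phi^*R^*} \phi^*E^*$. The hypercover $\phi:G'\to G$ together with the tautological identification $V' = \phi^*V$ produces a canonical linear lift $\tilde\phi: V' \to V$ covering $\phi$; since $\tilde\phi$ is fibrewise a linear isomorphism and $\phi$ is a hypercover, $\tilde\phi$ is itself a linear hypercover.

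The three ingredients now assemble cleanly. By (b) the inclusions $\nu_V: C_{proj}(V)\hookrightarrow C_{lin}(V)$ and $\nu_{V'}: C_{proj}(V')\hookrightarrow C_{lin}(V')$ are quasi-isomorphisms, and by (c) the linear hypercover $\tilde\phi$ induces an isomorphism $\tilde\phi^*: H_{lin}(V)\to H_{lin}(V')$. Chaining,
\[
H(G,E) \cong H_{proj}(V) \xrightarrow{\nu_V} H_{lin}(V) \xrightarrow{\tilde\phi^*} H_{lin}(V') \xleftarrow{\nu_{V'}} H_{proj}(V') \cong H(G',\phi^*E),
\]
every arrow being an isomorphism, yields the claimed isomorphism.

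The only point that genuinely requires care is naturality: this composite must coincide with the pullback map $\phi^*$ of the statement. Concretely, one has to check that the identifications in (a) and the inclusions $\nu$ of (b) are natural with respect to linear morphisms of VB-groupoids, and that $\tilde\phi$ realizes the representation-level pullback $\phi^*R^*$ on fibres. None of these compatibilities is deep, as each follows from unfolding the definitions of projectable cochains, linear cochains, and the semi-direct product; the main obstacle is essentially the bookkeeping of duality conventions and gradings needed to align the representation-level pullback $\phi^*R$ with the VB-groupoid-level pullback $\phi^*V$, which I would handle by fixing a sign and grading convention at the outset and verifying each square at the cochain level.
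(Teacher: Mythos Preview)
Your proposal is correct and follows essentially the same route as the paper: form the VB-groupoid $V=G\ltimes E^*$ from the dual representation, identify $H(G,E)\cong H_{proj}(V)$ via (a), pass to $H_{lin}(V)$ via (b), and use the linear hypercover $\phi^*V\to V$ together with (c) to conclude. The paper simply states that (a), (b), (c) ``easily combine'' to give the theorem, and your chain of isomorphisms is exactly that combination; your added remark on naturality is a reasonable bit of caution but, as you note, reduces to unfolding definitions.
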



This paper generalizes the previous theorem to arbitrary representations up to homotopy. An earlier version was presented in Studzinski's Thesis \cite{studzinski}, and this work expands on those findings. 
The proof follows the same lines as the 2-term case, using higher analogs for the constructions and results. The key ingredient is the semi-direct product $G\ltimes_R E$ for general representations up to homotopy $G\action E=\bigoplus_{n\geq0}E_n$, recently developed \cite{dht}, which produces a {\bf higher vector bundle} $V\to G$, a simplicial vector bundle over the nerve, generalizing VB-groupoids. We introduce {\bf projectable cochains} of a semi-direct product in Definition \ref{def:projectable}, and establish the following generalizations:
\begin{enumerate}[a')]
    \item Given $R:G\action E=\bigoplus_{n\geq0}E_n$ a  representation up to homotopy,  there is a cochain complex isomorphism $\lambda:C(G,E^*)\cong C_{proj}(G\ltimes E)$ (Proposition \ref{prop:lambda-morphism});
    \item Given $R:G\action E=\bigoplus_{n\geq0}E_n$ a representation up to homotopy, the inclusion $\nu:C_{proj}(G\ltimes_RE)\to C_{lin}(G\ltimes_RE)$ is a quasi-isomorphism (Proposition \ref{prop:proj-lin});
    \item A linear hypercover $\phi:V'\to V$ between higher vector bundles yields an isomorphism between the linear cohomologies $\phi^*:H_{lin}(V)\cong H_{lin}(V')$ (Corollary \ref{coro:cohomology-hypercover}).
\end{enumerate}


Every VB-groupoid is a semi-direct product, so our points a') and b') do generalize a) and b) above. Note that our definition of projectable cochains is only valid for semi-direct products, not general higher vector bundles. 
Point c') is a corollary of Theorem \ref{thm:higher-descent}, stating that a hypercover between higher Lie groupoids yields an isomorphism in cohomology. This fundamental result is proven on E. Getzler's slides \cite{getzler}, based on arguments of \cite{dhi}. We provide an original proof that borrows elements from there and stands for its simplicity and geometric nature, even in the Lie groupoid case. Combining a'), b') and c') we conclude:

\begin{theorem}\label{thm:main}
If $G', G$ are Lie groupoids, $\phi:G' \to G$ is a hypercover and $G\action E$ is a general representation up to homotopy, then 
$\phi^*:H(G,E)\to H(G',\phi^*E)$ is an isomorphism.
\end{theorem}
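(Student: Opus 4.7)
The plan is to combine the three ingredients a'), b'), c') spelled out in the introduction with the naturality of all constructions involved, mirroring the 2-term strategy of \cite{dho} with higher vector bundles and higher hypercovers replacing VB-groupoids and VB-hypercovers.

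First, I would apply Proposition \ref{prop:lambda-morphism} (point a') to the dual representation $G\action E^*$. Since $(E^*)^*\cong E$ for a graded vector bundle of finite rank in each degree, this yields a natural cochain isomorphism $\lambda:C(G,E)\cong C_{proj}(G\ltimes E^*)$. Composing with the quasi-isomorphism $\nu$ of Proposition \ref{prop:proj-lin} gives
\[
C(G,E)\xrightarrow{\lambda} C_{proj}(G\ltimes E^*)\xrightarrow{\nu} C_{lin}(G\ltimes E^*),
\]
which is a quasi-isomorphism, and likewise for $G'$ in place of $G$. The theorem thus reduces to the Morita invariance of linear cohomology for the associated higher vector bundles.

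Second, the hypercover $\phi:G'\to G$ together with the pullback representation $\phi^*E^*$ induces a linear morphism $\tilde\phi:G'\ltimes\phi^*E^*\to G\ltimes E^*$ of higher vector bundles covering $\phi$. I would verify that $\tilde\phi$ is itself a hypercover of higher Lie groupoids: in each simplicial degree $\tilde\phi$ fits into a pullback square with $\phi$, so the acyclic-fibration conditions defining a hypercover transfer from $\phi$ to $\tilde\phi$ using that pullbacks of surjective submersions along vector bundle projections remain surjective submersions. Corollary \ref{coro:cohomology-hypercover} (point c') then yields that $\tilde\phi^*:H_{lin}(G\ltimes E^*)\to H_{lin}(G'\ltimes\phi^*E^*)$ is an isomorphism.

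Finally, I would assemble the commutative square
\[
\begin{array}{ccc}
H(G,E) & \xrightarrow{\ \sim\ } & H_{lin}(G\ltimes E^*) \\
\phi^*\,\downarrow & & \downarrow\,\tilde\phi^* \\
H(G',\phi^*E) & \xrightarrow{\ \sim\ } & H_{lin}(G'\ltimes\phi^*E^*)
\end{array}
\]
whose horizontal arrows are the composites of step one applied to $G$ and $G'$. The right vertical is an isomorphism by step two, hence so is the left vertical $\phi^*$. The step I expect to be the main obstacle is the commutativity of this square, i.e.\ the naturality of $\lambda$ and $\nu$ with respect to $\phi$; this should follow by unwinding the definitions, since the semi-direct product construction is functorial in morphisms covering $\phi$, and the projectable and linear subcomplexes are characterized by conditions that are preserved under pullback.
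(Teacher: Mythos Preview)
Your proposal follows essentially the same route as the paper's proof: reduce to linear cohomology of the semi-direct product via $\lambda$ and $\nu$, observe that $G'\ltimes\phi^*E^*\to G\ltimes E^*$ is a base-change of the hypercover $\phi$ (the paper phrases this as ``the semi-direct product commutes with pullbacks'' and invokes Lemma~\ref{lemma:nice-family}), and then apply Corollary~\ref{coro:cohomology-hypercover}. The one technical point you omit and the paper makes explicit is that the semi-direct product of Definition~\ref{def:sdp} is only defined for \emph{non-negatively} graded bundles, so before forming $G\ltimes E^*$ you must shift the grading of $E$ to place $E^*$ in non-negative degrees; this only shifts the cohomology and is harmless, but without it the construction is not available for a general bounded $E$.
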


\begin{proof}
Without loss of generality, by eventually shifting the grading, we can assume that $E^*$ and $\phi^*E^*$ are on non-negative degrees, so we can build their semi-direct products, and combining a') and b'), we get $H(G,E)\cong H_{lin}(G\ltimes E^*)$
and 
$H(G',\phi^*E)\cong H_{lin}(G'\ltimes \phi^*E^*)$.
The semi-direct product commutes with pullbacks, so the following diagram is cartesian
$$\xymatrix{G'\ltimes \phi^*E^* \ar[r] \ar[d]& G\ltimes E^* \ar[d] \\ G'\ar[r]^\phi & G}$$
and $(G'\ltimes \phi^*E^*) \to (G\ltimes E^*)$ is a linear hypercover, see Lemma \ref{lemma:nice-family}.
The result follows by c').
\end{proof}


We close by commenting on two open questions. 
One could expect a generalization of Theorem \ref{thm:main} to the case when $G',G$ are higher Lie groupoids, since representations up to homotopy still make sense. Our proofs of a') and b') strongly use that the base has an inversion map, when defining $\lambda$, and when building a homotopy inverse for the inclusion $\nu$. Thus other arguments are needed. 
The second open question is the Morita invariance of the derived category $Rep^\infty[G]$, which is the localization of the category of representations up to homotopy by quasi-isomorphisms \cite{ac}. Given $G',G$ Lie groupoids and $\phi:G'\to G$ a hypercover, we believe our Theorem \ref{thm:main}
may play a role in proving that the pullback functor $\phi^*:Rep^\infty[G]\to Rep^\infty[G']$ is fully faithful, using and inner-hom construction. 

\medskip


{\bf Organization.}
Sections 2 and 3 review the background material on Lie groupoids, setting notations and conventions, and serving as a quick reference. Section 4 introduces projectable cochains and sets the isomorphism of a'). Section 5 introduces a filtration on the linear cochains and proves that each step is a quasi-isomorphism, achieving b'). Section 6, which can be read independently, reviews the fundamental notions on higher Lie groupoids, improving and simplifying some results, and showing the cohomological descent, of which c') is a corollary. 

\medskip

{\bf Acknowledgments.}
M. del Hoyo and C. Ortiz were supported by Brazil National Council for Scientific and Technological Development - CNPq, grants 310289/2020-3, 315502/2020-7 and 402320/2023-9. 
M. del Hoyo was also supported by Rio de Janeiro Research Foundation - FAPERJ, grant E-26/201.305/2021. 
F. Studzinski was partially supported by a CAPES PhD Fellowship and São Paulo Research Foundation - FAPESP, grant 2015/01698-7.


\section{Lie groupoids, hypercovers and cohomology}

Let us start with an overview of fundamental notions of Lie groupoids, hypercovers and cohomology, to set notations and conventions, and to serve the reader as a quick reference. Our references, where further details can be consulted, are \cite{crainic,mackenzie,dh,gsm,cd,bdh}.

\medskip

Given a {\bf Lie groupoid} $G$, we write $G_0$ and $G_1$ for its manifolds of objects and arrows, respectively, $s,t: G_1\to G_0$ for the source and target map, which are surjective submersions, and $m: G_2\to G_1$, $u: G_0\to G_1$ and $i: G_1 \to G_1$ for the multiplication, unit and inverse maps, which are smooth. Here $G_2\subset G_1\times G_1$ denotes the manifold of pairs of composable arrows.
A {\bf morphism} $\phi: G\to H$ is a smooth functor. We write $\phi_0,\phi_1$ for the smooth maps at the level of objects and arrows. Two morphisms $\phi,\psi: G \to H$ are {\bf isomorphic}, notation $\phi\cong\psi$, if there is a smooth natural isomorphism $\alpha: G_0\to H_1$ between them. Two Lie groupoids $G,H$ are {\bf equivalent} if there are morphisms $\phi:G\to H$, $\psi:G\to H$ such that $\psi\phi\cong\id_G$ and $\phi\psi\cong\id_H$. 



A morphism $\phi: G \to H$ is {\bf Morita} if it is smoothly fully faithful and essentially surjective, namely
(ES) the map $H_1\tensor[_s]{\times}{_{\phi_0}}G_0\rightarrow H_0$,  $(h,x)\mapsto t(h)$, is a surjective submersion, and
(FF) the following is a (good) cartesian square of manifolds $$\xymatrix{G_1 \ar[r]^{\phi_1} \ar[d]_{(t,s)} & H_1 \ar[d]^{(t,s)} \\ G_0\times G_0 \ar[r]_{\phi_0\times \phi_0} & H_0\times H_0}$$
Morita morphisms can be characterized as those inducing isomorphisms on the isotropy, the orbit space, and the normal directions \cite{dh}. While every morphism invertible up to isomorphism is Morita, not every Morita morphism admits a quasi-inverse.  
A Lie groupoid morphism $\phi:G\to H$ is a {\bf fibration} if it satisfies 
(F1) the map $\phi_0:G_0\to H_0$ is a surjective submersion, and (F2) the map $(s,\phi_1):G_1\to G_0\times_{H_0}H_1$ is a surjective submersion. 
Note that (F1) is stronger than (ES) and (F2) is weaker than (FF).
A {\bf hypercover} $\phi: G\to H$ is by definition a Morita morphism and a fibration, or in other words, a morphism satisfying (F1) and (FF).

\begin{example}
\begin{enumerate}[a)]
    \item Given $f: M\to N$ a submersion, the {\bf submersion groupoid} $M\times_NM\toto M$ has an arrow from $x$ to $y$ if and only if they belong to the same fiber. Write $\phi:(M\times_NM\toto M)\to (N\toto N)$ for the projection. Then $\phi$ is a hypercover if and only if $f$ is surjective, and $\phi$ is an equivalence if and only if $f$ admits a global section.
    \item Given $\pi:P\to M$ a principal bundle with group $K$, its {\bf gauge groupoid} $P\times^KP\toto M$ has the points of $M$ as objects and the $K$-isomorphisms as arrows.  Write $\phi:(K_x\toto x)\to(P\times^K P\toto M)$ for the inclusion of an isotropy group. Then $\phi$ is always Morita, and $\phi$ is an equivalence if and only if $P\to M$ is trivial as a bundle.
\end{enumerate}
\end{example}


Two Lie groupoids $G, H$ are {\bf Morita equivalent} if there is a third one $K$ together with hypercovers $G\from K\to H$. This is a well-defined equivalence relation, for hypercovers are closed under composition and under base-change, see e.g. \cite{dh}.  A {\bf differentiable stack} $[G]$ is the class of a Lie groupoid $G$ up to Morita equivalence. 
The geometry of differentiable stacks is determined by the properties and invariants of Lie groupoids which are preserved by hypercovers. For instance, $[G]$ is {\bf separated} if $G_1\to G_0\times G_0$ is proper and
the {\bf dimension} of $[G]$ is
$2\dim G_0-\dim G_1$, are examples of Morita invariants. 


\begin{example}
\begin{enumerate}[a)]
\item We can see a manifold $M$ as a differentiable stack via the unit Lie groupoid $M\toto M$. 
It is separated and $\dim [M]=\dim M$. 
A Lie groupoid is Morita equivalent to a manifold if and only if it is a submersion groupoid.
\item A Lie group $K\toto \ast$ yields the so-called {\bf classifying stack} $[K]$, for it classifies principal bundles via stacky maps. This is a finite-dimensional model for Milnor's classifying space $BK$. It is separated if and only if $K$ is compact, and $\dim[K]=-\dim K$. A Lie groupoid is Morita equivalent to a Lie group if and only if it is a gauge groupoid.
\end{enumerate}
\end{example}



The cohomology of a Lie groupoid is defined via its nerve, which is a simplicial manifold. Let us quickly review these notions. Write $\Delta$ for the category of finite ordinals $[n]=\{n,\dots,1,0\}$, $\delta_i:[n-1]\to[n]$ for the injection missing $i$, and $\sigma_j:[n+1]\to[n]$ for the surjection repeating $j$.
 A {\bf simplicial manifold} is a contravariant functor $X:\Delta^\circ\to\Man$, or equivalently, a system $(X_n,d_i,s_j)$ where $X_n$ are manifolds, $n\geq0$, and $d_i=\delta_i^*:X_n\to X_{n-1}$ and $s_j=\sigma_j^*:X_n\to X_{n+1}$, $0\leq i,j\leq n$, are {\bf faces} and {\bf degeneracies} subject to the usual simplicial identities \cite{gj}. 
An $n$-simplex $x\in X_n$ can be regarded as a map $x:\Delta^n\to X$, where $\Delta^n=\Delta(-,[n])$ is the representable {\bf $n$-simplex}. 
Write $\iota_p:[p]\to[n]$ and $\tau_q:[q]\to[n]$ for the first and last inclusions. 
Thus $\tau_q^*(x)=x\tau_q\in X_q$ is the $q$-th back face of $x$, and $\iota_p^*(x)=x\iota_p\in X_p$ is the $p$-th front face. The {\bf cochain algebra} $C(X)=\bigoplus_{n\geq0} C^\infty(X_n)$ is the differential graded algebra given by
\begin{enumerate}[$\bullet$]
    \item $\cup:C^\infty(X_q)\times C^\infty(X_p)\to C^\infty(X_{p+q})$, $(f\cup g)(x)=f(x\tau_q)g(x\iota_p)$;
    \item $\delta:C^\infty(X_n)\to C^\infty(X_{n+1})$, $\delta(f)(x)=(-1)^{n+1}\sum_{i=0}^{n+1}(-1)^{i} f(x\delta_i)$.
\end{enumerate}
The {\bf cohomology} $H(X)$ is that of its cochain algebra. A cochain $f\in C_N(X)$ is {\bf normalized} if it vanishes over degenerate simplices. The normalized cochains define a differential graded subalgebra, and by the Dold-Kan Theorem, the inclusion $C_N(X)\to C(X)$ is a quasi-isomorphism, so $H(X)$ can be computed using just normalized cochains, see \cite{gj}.

Given $G$ a Lie groupoid, its {\bf nerve} $NG=(G_m,d_i,s_j)$, is the simplicial manifold whose $m$-simplices are maps $g:[n]\to G$, where we regard $[n]$ as a (discrete) category with an arrow $j\from i$ if and only if $j\geq i$, or equivalently, sequences of $n$ composable arrows:
$$g=(x_n\xfrom{g_n} x_{n-1}\from \cdots \from x_2\xfrom{g_2}x_1\xfrom{g_1}x_0).$$
Then the face $d_i:G_n\to G_{n-1}$ deletes $x_i$, erasing the first or last arrow or composing the two adjacent ones, and the degeneracy $s_j: G_n\to G_{n+1}$ repeats $x_j$, inserting an identity.
Moreover, $g\tau_q\in G_q$ is formed by the last $q$ arrows and $g\iota_p\in G_p$ is formed by the first $p$ arrows:
$$g\tau_q=(x_n\xfrom{g_n} x_{n-1}\xfrom{g_{n-1}} \cdots \xfrom{g_{n-q+1}} x_{n-q})
\qquad
g\iota_p=(x_p\xfrom{g_p} x_{p-1}\xfrom{g_{p-1}} \cdots \xfrom{g_{1}} x_{0}).$$
The cochain algebra and the cohomology of a Lie groupoid $C(G)$ and $H(G)$ are those of $NG$. Since the nerve is fully faithful, by an abuse of notation, we often write $G$ instead of $NG$.

\begin{example}
\begin{enumerate}[a)]
    \item If $M$ is a manifold, viewed as a unit groupoid, then $C_N(M)=H(M)=C^\infty(M)$ is the algebra of smooth functions concentrated in degree 0.
    \item If $K$ is a Lie group, viewed as a groupoid with a single object, then $H^0(K)=0$, $H^1(K)=\hom(K,\R)$ and $H^2(K)$ controls the central extensions $0\to\R\to\tilde K\to K\to 0$. 
\end{enumerate}    
\end{example}


Let $G$ be a Lie groupoid and $E\to G_0$ be a (real, finite-dimensional) vector bundle. A {\bf representation} $R:G\action E$ is a smooth map $R:G_1 \times_\pi E\to E$, $R(g,x)=R_g(x)$, such that for every $y\xfrom g x$ the map $R_g:E_x\to E_y$ is linear, $R_h R_g=R_{hg}$, and  $R_{u(x)}=\id_{E_x}$.
Given $R:G\action E$, the {\bf $E$-valued cochain complex} 
$C(G,E)=\bigoplus_n \Gamma(G_n,t^*E)$ is the differential graded $C(G)$-module, where $t=\tau_0^*:G_n\to G_0$ is the generalized target map, and 
\begin{enumerate}[$\bullet$]
    \item $\cdot:C^q(G,E)\times C^p(G)\to C^{p+q}(G,E)$, $(c\cdot f)(g)=c(g\tau_q)f(g\iota_p)$.
    \item $\delta_R:C^n(G,E)\to C^{n+1}(G,E)$, $\delta(c)(g)=R_{g_{n+1}}(c(d_{n+1}g))+ (-1)^{n+1}\sum_{i=0}^{n}(-1)^{i}c(d_{i}g)$.
\end{enumerate}
The {\bf cohomology with coefficients} $H(G,E)$ is that of $C(G,E)$. When $E=\R_{G_0}$ and $R_g=\id_{\R}$ for every $g$ this recovers $H(G)$.
Note that the graded $C(G)$-module $C(G,E)$ does not depend on the representation $R$, and that $R$ is completely determined by the differential $\delta_R$, see \cite{gsm}.

\begin{example}
\begin{enumerate}[a)]
    \item A representation of a submersion groupoid $(M\times_NM\toto M)\action (E\to M)$ is the same as a descent data for $E$. In particular, if $U\to M$ is an open cover, 
    a representation $(U\times_MU\toto U)\action \R^k_U$ is the same as a cocycle defining a vector bundle over $M$.
    \item Representations of a Lie group $K\toto\ast$ are the usual ones, and a representation of a gauge groupoid $P\times^KP\toto M$ on a vector bundle $E\to M$ corresponds to a reduction of the structure group. 
\end{enumerate}   
\end{example}

Given $\phi:G'\to G$ a Lie groupoid map and $R:G\action E$ a representation, the {\bf pullback representation} $\phi^*R:G\action\phi^*E$ is given by $(\phi^*R)(g',e)=R(\phi_1(g),e)$. There is a canonical map $\phi^*:H(G,E)\to H(G',\phi^*E)$, and if $\phi$ is a hypercover, then by Theorem \ref{thm:1} this is an isomorphism. This can be seen as a first instance of the Morita invariance of cohomology.


A {\bf VB-groupoid} $V\to G$ is a groupoid morphism such that $V_i\to G_i$ are vector bundles, and the structure maps of $V$ are linear over those of $G$. 
A VB-groupoid is a fibration. We define its {\bf core} as $E=E_1\oplus E_0$, where $E_0=V_0$ and $E_1=(\ker s:V_1\to V_0)|_{G_0}$. Its {\bf anchor map} is $t:E_1\to E_0$. 

\begin{example}
 \begin{enumerate}[a)]
     \item The tangent $TG_1\toto TG_0$ and the cotangent $T^*G_1\toto A_G^*$ of a Lie groupoid $G$ are the paradigmatic examples of VB-groupoids.
     \item Given $G\action E$ a representation, its {\bf semi-direct product} $G\ltimes E$ is a new Lie groupoid, with $(G\ltimes E)_0=E$, $(G\ltimes E)_1=G_1\times_{G_0}E$, source the projection, target the action, and multiplication, units and inverses coming from those of $G$. The projection $(G\ltimes E)\to G$ is a VB-groupoid. Moreover, a VB-groupoid is a semi-direct product if and only if $E_1=0$.
\end{enumerate}   
\end{example}

%

Given $V\to G$ a VB-groupoid, the {\bf linear cochains} $C_{lin}(V)\subset C(V)$ are the fiber-wise linear, so they identify with sections of the dual bundle, $C_{lin}^k(V)=\Gamma(V_k^*)$.
This inclusion admits a natural retraction, the {\bf linearization map} $L_V:C(V)\to C_{lin}(V)$, $L(f)(v)=\odv{}{t}_{t=0}{f(tv)}$, which commutes with the differential. Writing $K_V$ for the kernel of $L_V$, we conclude:

\begin{lemma}[\cite{cd}, Prop 2.3]\label{lemma:splitting-vb}
Given $V\to G$ a VB-groupoid, the linearization map yields natural direct sum decompositions:
$$C(V)=C_{lin}(V)\oplus K_V \qquad H(V)=H_{lin}(V)\oplus H(K_V).$$
\end{lemma}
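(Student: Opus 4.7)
The plan is to reduce the claim to three elementary facts about the linearization map: (1) $L_V$ is a chain map, (2) $L_V|_{C_{lin}(V)} = \id$ (so $L_V$ is an idempotent retraction onto $C_{lin}(V)$), and (3) $K_V = \ker L_V$ by definition. In any abelian category, a chain complex together with an idempotent chain endomorphism splits as the direct sum of the image and the kernel; applying this to $L_V$ yields the cochain decomposition $C(V) = C_{lin}(V) \oplus K_V$, and cohomology, being additive, then splits accordingly as $H(V) = H_{lin}(V) \oplus H(K_V)$.

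For (2), if $f \in C^\infty(V_k)$ is fiberwise linear over $G_k$, then $f(tv) = t f(v)$ for $v$ in the fiber, so $\odv{}{t}_{t=0} f(tv) = f(v)$, proving $L_V(f) = f$. For the image to actually land in $C_{lin}(V)$, I would check that $L_V(f)$ is fiberwise linear: fiberwise $1$-homogeneity follows from the chain rule applied to $s \mapsto t\lambda v$, and additivity (for $v,w$ in the same fiber of $V_k \to G_k$) is the statement that the differential of $f$ at the zero section, restricted to the fiber, is linear.

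For (1), the essential observation is that, because $V \to G$ is a VB-groupoid, every face $d_i$ and degeneracy $s_j$ of $NV$ is a vector bundle morphism covering the corresponding structure map of $NG$. In particular $d_i(tv) = t \cdot d_i(v)$ for the scalar action on the fibers. Substituting into the Alexander–Whitney differential and interchanging $\odv{}{t}_{t=0}$ with the finite alternating sum, one obtains $L_V(\delta f)(v) = \delta(L_V f)(v)$; the multiplicative sign coming from $\delta$ is inert under the derivative in $t$.

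I do not anticipate a real obstacle here: the only subtle point is that $L_V(f)$ is genuinely fiberwise linear rather than merely positively homogeneous, which requires the additivity argument above rather than just pulling out scalars. Everything else is a transparent consequence of the fact that the simplicial structure of $NV$ is \emph{linear} over that of $NG$, which is exactly the definition of a VB-groupoid.
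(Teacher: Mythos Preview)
Your proposal is correct and follows the same approach as the paper, which merely records (without detailed proof) that $L_V$ is a retraction onto $C_{lin}(V)$ commuting with the differential and then cites \cite{cd}. Your verification of points (1)--(3) supplies exactly the details the paper omits, and the observation that $L_V(f)$ is fiberwise linear because it is the fiber differential of $f$ at the zero section is the right way to handle the only delicate step.
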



Given $V\to G$ a VB-groupoid, there is yet another subcomplex of the cochain algebra of $V$ that we will work with. Let $V^0_n=\ker ((\iota_1)^*:V_n\to V_1)\subset V_n$ be the chains of composable arrows in $V$ starting with a 0. Then a cochain $c\in C^n_{lin}(V)$ is {\bf projectable} if $c$ and $\delta(c)$ vanish over $V^0_n$ and $V^0_{n+1}$, respectively. In other words, it must satisfy $c(v_p,\dots,v_2,0_{g_1})=0$ and $\delta(c)(v_{p+1},\dots,v_2,0_{g_1})=0$. 
Assuming that $c$ vanishes over $V^0_n$, it is easy to see that $\delta(c)$ vanishes over $V^0_{n+1}$ if and only if $c$ satisfies 
$c(v_{p},\dots,v_2,v_1)=c(v_{p},\dots,v_2,v_10_{g})$, where $0_g\in G_1\subset V_1$ is an arrow over the zero section. This equivariant condition will be crucial in our generalization for higher vector bundles. The complex of projectable cochains $C_{proj}(V)$ was introduced in \cite{gsm} with the notation $C_{VB}(V^*)$, while \cite{cd} and \cite{dho} denote it by $C_{VB}(V)$. 
It provides a smaller model for the linear cohomology, namely:

\begin{lemma}[\cite{cd}, Lemma 3.1]\label{lemma:proj-lin}
Given $V\to G$ a VB-groupoid, the inclusion $C_{proj}(V)\subset C_{lin}(V)$ is a quasi-isomorphism.
\end{lemma}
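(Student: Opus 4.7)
The plan is to exhibit the inclusion $C_{proj}(V) \hookrightarrow C_{lin}(V)$ as a deformation retract by constructing an explicit cochain homotopy. The structural resource is the zero section $z: G \hookrightarrow V$ of the VB-groupoid, which together with the multiplication of $V$ yields the two natural operations needed to isolate the projectable part of a linear cochain: replacing the first arrow in a chain by a zero, and right-multiplying the first arrow by a zero over $G$.

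First I would spell out the characterization of projectability highlighted in the paragraph preceding the lemma: $c \in C_{lin}^n(V)$ is projectable iff both (i) $c(v_n,\ldots,v_2,0_{g_1}) = 0$ for every chain whose first arrow is a zero, and (ii) $c$ is right-invariant under multiplication by zeros on the first arrow, meaning $c(v_n,\ldots,v_1) = c(v_n,\ldots,v_2,v_1 \cdot 0_g)$ for every $g$ composable with $v_1$. Neither condition alone cuts out a subcomplex, but their conjunction does, because (ii) is precisely what forces $\delta c$ to inherit the vanishing (i).

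Next I would construct the homotopy in two stages, in the spirit of the extra-degeneracy argument proving that normalized simplicial cochains compute the full cohomology. Stage one uses the smooth map $\rho: V_n \to V_n$, $(v_n,\ldots,v_1) \mapsto (v_n,\ldots,v_2,0_{g_1})$, together with a standard alternating-sum formula, to produce a homotopy $h_1$ satisfying $\delta h_1 + h_1 \delta = \id - P_1$ where $P_1 c := c - \rho^* c$ enforces condition (i). Stage two, applied on the image of $P_1$, uses the right-multiplication-by-zero operation to produce a homotopy $h_2$ satisfying $\delta h_2 + h_2 \delta = \id - P_2$ enforcing condition (ii). The composition $P := P_2 P_1$ lands in $C_{proj}(V)$, and combining $h_1$ and $h_2 P_1$ with appropriate signs furnishes the desired homotopy $\id_{C_{lin}} \simeq \iota P$.

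The main obstacle will be the combinatorial bookkeeping in Stage two: once we restrict to cochains vanishing on $V_n^0$, the right-action by zeros is only partially defined on chains, and verifying the homotopy identity requires carefully chasing how the faces $d_i$ interact with the right action, particularly at $i = 0$ and $i = n+1$. As the authors note in their closing remarks, this step \emph{``strongly uses that the base has an inversion map''}, so one cannot simply transplant a purely simplicial argument; one must exploit that the groupoid inverse $i: G_1 \to G_1$ (lifted through the zero section) permits inverting by zeros, which is what ultimately makes the right-invariance homotopy well-defined and justifies that the two stages can be combined coherently without interfering.
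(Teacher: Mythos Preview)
Your two-stage plan has a genuine gap at Stage~1. You set $P_1 c := c - \rho^* c$ and assert the existence of an $h_1$ with $\delta h_1 + h_1 \delta = \id - P_1$; but any such identity forces $P_1$, hence $\rho^*$, to be a chain map, and it is not. For $v = (v_{n+1},\dots,v_1) \in V_{n+1}$ one computes $d_0 \rho(v) = (v_{n+1},\dots,v_2)$ versus $\rho d_0(v) = (v_{n+1},\dots,v_3, 0_{g_2})$, and $d_1\rho(v) = (v_{n+1},\dots,v_3, v_2\cdot 0_{g_1})$ versus $\rho d_1(v) = (v_{n+1},\dots,v_3, 0_{g_2 g_1})$; these discrepancies do not cancel in the alternating sum, so $\rho^*\delta \neq \delta\rho^*$ on $C_{lin}(V)$. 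The ``standard alternating-sum formula'' you invoke belongs to the extra-degeneracy argument, where the relevant operators \emph{raise} simplicial degree and satisfy the simplicial identities against the faces; your $\rho$ is a degree-preserving endomorphism with no such compatibility, so the normalized-cochain analogy breaks down. The same obstruction reappears in Stage~2: the right-translation-by-zero projector is not a chain map for the same reason, so even granting Stage~1 the composite $P_2 P_1$ would not be one either.

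The paper does not prove this lemma (it is quoted from \cite{cd}) but does prove its generalization, Proposition~\ref{prop:proj-lin}, by a mechanism that sidesteps exactly this trap. Instead of seeking an idempotent retraction onto $C_{proj}$, one builds a single degree~$-1$ operator $h^*$ from a map $h:(G\ltimes E)_n \to (G\ltimes E)_{n+1}$ (this is where the groupoid inversion enters), and sets $I = \id + h^*\delta + \delta h^*$. Then $I$ is tautologically a chain map homotopic to $\id$; the substance lies in showing that $I$ preserves the $m$-projectable subcomplex and strictly decreases a numerical defect (Lemma~\ref{lemma:I-filtration} and Proposition~\ref{prop:defect}), so that \emph{iterating} $I$ finitely many times lands in the next filtration step. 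The key difference from your plan is that one application of $I$ is not a projection and does not enforce either of your conditions (i) or (ii) on its own --- it only nudges a cochain closer to projectability --- and this relaxation is precisely what allows the homotopy to exist.
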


\section{Representations up to homotopy}

Representations up to homotopy are representations of Lie groupoids on graded vector bundles. They were introduced in \cite{ac} to make sense of the adjoint and coadjoint representation. The 2-term case was carefully studied in \cite{gsm}. We review these notions here, as well as the simplicial approach via the semi-direct product recently developed in \cite{dht}. We work only with graded vector bundles which are bounded from above and below. 

\medskip


Given $G$ a Lie groupoid, write $s=\iota_0^*:G_n\to G_0$ and $t=\tau_0^*G_n\to G_0$ for the generalized source and target maps. Given $E= \bigoplus_{n\in\Z} E_n$ a graded vector bundle over $G_0$, a {\bf representation up to homotopy}
$R=(R_m)_{m\geq0}:G\action E$ is given by sections 
$R_m\in\Gamma(G_m,\hom^{m-1}(s^*E,t^*E))$, so for each $g\in G_m$ and $n\geq 0$ we have 
$(R_m^g)_n:E^{s(g)}_n\to E^{t(g)}_{n+m-1}$,
such that:
\begin{enumerate}[(RH1)]
    \item $R_1(u(x))=\id_{E^x}$ and $R^g_m=0$ whenever $m\geq 2$ and $g$ is degenerate.
    \item $\sum_{k=1}^{m-1}(-1)^kR_{m-1}^{g\delta_k}=\sum_{k=0}^m(-1)^kR_{m-k}^{g\tau_{m-k}}\circ 
R_k^{g\iota_k}$ for every $g\in G_m$, $m\geq 0$.
\end{enumerate}


Let us briefly describe the sections $R_m$. When $m=0$, for each $x_0$ in $G_0$, we have a linear maps $R^{x_0}_0:E^{x_0}_n\to E^{x_0}_{n-1}$ turning $(E_{\bullet},R_0)$ into a chain complex of vector bundles over $G_0$; when $m=1$, for each arrow $x_1\xfrom {g_1} x_0$ arrow in $G_1$ we have
linear maps  $R_1^{g_1}:E_n^{x_0}\to E_n^{x_1}$ defining a chain maps between the fibers; when $m=2$, for each pair of composable arrows $(x_2 \xfrom{g_2} x_1\xfrom{g_1} x_0)$ in $G_2$, we have linear maps $R_2^{g_2,g_1}:E_n^{x_0}\to E_{n+1}^{x_2}$ giving a chain homotopy $R_1^{g_2}\circ R_1^{g_1}\cong R_1^{g_2g_1}$.
 When $m>2$, $R^g_m$ can be thought of as a higher homotopy giving constraints for the lower homotopies. 

\begin{example}
\begin{enumerate}[a)]
    \item When $E=E_0$ this recovers the previous notion of a representation of a Lie groupoid on a vector bundle;
    \item When $M$ is a manifold, viewed as a unit groupoid, then a representation up to homotopy $M\action E$ is the same as a differential $\partial:E_n\to E_{n-1}$.
    \item Given a Lie groupoid $G$, its tangent VB-groupoid $TG$ can be split via the choice of a connection or cleavage $\Sigma$, giving rise to the {\bf adjoint representation} $Ad_\Sigma:G\action (A\oplus TM)$ \cite{ac}, this is a 2-term representation and one of the motivating examples. 
    \item
    Generalizing the previous example, given a VB-groupoid $V\to G$, it is possible to define a 2-term representation up to homotopy $R_\Sigma: G\action E=E_1\oplus E_0$, by choosing a {\bf cleavage} $\Sigma$, namely a section of the source $s: V_1\to s^*V_0$ preserving the units. See \cite{gsm,dho}.
\end{enumerate}
\end{example}


There is an alternative equivalent approach to representations up to homotopy using differential graded algebras \cite{ac}. We review it here to set notations and conventions, some of ours are different than those there. 
Given $G$ a Lie groupoid and $E = \bigoplus_{n\in\Z} E_n$ a (bounded) graded vector bundle over $G_0$, an {\bf $E$-valued cochain} of bi-degree $(i,j)$ is an element in 
$$c\in C^{i,j}(G, E) =C^i(G, E_{-j})=\Gamma(G_i,t^*E_{-j})$$
A cochain is {\bf normalized} if it vanishes over the degenerate simplices.
We write $C^n(G,E)=\bigoplus_{i+j=n}C^{i,j}(G, E)$, and $C(G, E)=\bigoplus_{n\in\Z}C^n(G, E)$. 
This is a $C^{\bullet}(G)$-module, with 
$$(c\cdot f) (g) = c(g\tau_q) f(g\iota_p) \qquad g\in G_{p+q},\ c\in C^q(G, E),\ f\in C^p(G).$$

\begin{lemma}[\cite{ac}]\label{lemma:R-d}
A representation up to homotopy $R:G\action E$ is the same as a degree 1 differential $d$ on $C(G,E)$ preserving the normalized cochains. The {\bf cohomology with coefficients} $H(G,E)$ is that of the complex $(C(G,E),d)$.
\end{lemma}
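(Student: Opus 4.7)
The plan is to exhibit an explicit bijection between the data $R=(R_m)_{m\geq 0}$ and differentials $d$ on $C(G,E)$, and then match the structure equations. First, working with the bigrading $C^{i,j}(G,E)=\Gamma(G_i, t^*E_{-j})$, any degree-one endomorphism $d$ that is local over $G$ (in the sense that its value on $c$ at $g$ depends only on $c$ at faces of $g$) decomposes as $d=\sum_{m\geq 0}D_m$, with $D_m$ of bidegree $(m,1-m)$. Each component $D_m$ is uniquely determined by a section $R_m\in\Gamma(G_m,\hom^{m-1}(s^*E,t^*E))$, together with a simplicial face contribution for the $m=1$ piece. Explicitly, for $c\in C^i(G,E_{-j})$ and $g\in G_{i+m}$ I set
\[
(D_m c)(g) \;=\; R_m^{\,g\tau_m}\bigl(c(g\iota_i)\bigr) \;+\; \varepsilon_m\,(-1)^{i+j+1}\sum_{k=1}^{i}(-1)^{k}\,c(d_k g),
\]
where $\varepsilon_m=1$ if $m=1$ and $0$ otherwise. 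This specifies $d_R$ from $R$, and conversely recovers $R$ from any such $d$ by reading off its bigraded components.

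Next I compute $d_R\circ d_R$ and collect terms by bidegree. The component of bidegree $(p,1-p)$ of $d_R^2$ applied to $c\in C^{i,j}$ involves compositions $R_{m-k}\circ R_k$ with $m=p+1$, with the front/back face arguments determined by the simplicial identities $\tau_{m-k}\circ\iota_k$ compatibilities, together with expressions of the form $R_{m-1}^{g\delta_k}$ arising from the interaction of $D_1$ with itself through the internal face sum. Matching the two sides via the simplicial identities yields, for each $m\geq 0$, the relation
\[
\sum_{k=0}^{m}(-1)^{k}\,R_{m-k}^{\,g\tau_{m-k}}\circ R_k^{\,g\iota_k}\;=\;\sum_{k=1}^{m-1}(-1)^{k}\,R_{m-1}^{\,g\delta_k},
\]
which is exactly axiom (RH2). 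Hence $d_R^2=0$ is equivalent to (RH2). The normalization condition (RH1) translates by direct inspection into $d_R$ preserving the normalized subcomplex: the vanishing of $R_m$ on degenerate simplices for $m\geq 2$ ensures that $D_m c$ is normalized when $c$ is, while the unit condition $R_1(u(x))=\id_{E^x}$ is forced by the requirement that $D_1$ restrict to the standard coboundary along units.

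The main obstacle is sign bookkeeping. With the Koszul convention on the total complex, the signs produced by $d_R^2$, which come from both the bidegree of $c$ and the alternating face sum in $D_1$, must match those in (RH2). I would pin these down by checking the low cases $m=0,1,2$ — which recover respectively the chain complex identity $R_0^2=0$, the chain map property $[R_0,R_1]=0$, and the homotopy $R_1^{g_2}R_1^{g_1}-R_1^{g_2g_1}=[R_0,R_2^{g_2,g_1}]$ — against the explicit evaluation of $d_R^2 c$ on cochains concentrated in bidegrees $(0,j)$ and $(i,0)$. Once conventions are fixed in those cases, the general identity propagates by the same pattern, since the signs introduced by iterating $D_m\circ D_{m'}$ depend only on the simplicial degree and the Koszul grading, both of which are preserved under the bidegree decomposition.
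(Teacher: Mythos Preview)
Your approach is essentially the same as the paper's: decompose $d=\sum_m D_m$ by bidegree, set up the dictionary $D_m\leftrightarrow R_m$, then match (RH1) with preservation of normalized cochains and (RH2) with $d^2=0$. The paper's proof is equally brief and just records the explicit formulas, so structurally you are on target.

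Two concrete issues are worth fixing. First, in your formula for $D_1$ the alternating face sum runs over $k=1,\dots,i$, but it must include $k=0$: for $g\in G_{i+1}$ the top face $d_{i+1}g=g\iota_i$ is absorbed into the $R_1$ term, while \emph{all} of $d_0g,\dots,d_ig$ must appear in the simplicial part. With your range the differential does not square to zero even for a trivial $E$. The paper's formula is
\[
d_1(c)(g)=(-1)^{j}R_1^{g\tau_1}c(d_{i+1}g)+(-1)^{i+j+1}\sum_{r=0}^{i}(-1)^r c(d_r g),
\]
and the Koszul sign $(-1)^j$ in front of the $R_m$ term (for all $m$) is also necessary; without it the cross terms in $d^2$ do not cancel against (RH2). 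You anticipate this under ``sign bookkeeping'', but the missing $d_0$ term is a genuine omission, not a sign choice.

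Second, your hypothesis that $d$ is ``local over $G$'' is not the right condition to ensure each bigraded component $D_m$ is determined by a section $R_m$. What forces this is that $d$ makes $C(G,E)$ a differential graded module over $(C(G),\delta)$, i.e.\ the graded Leibniz rule $d(c\cdot f)=d(c)\cdot f+(-1)^{|c|}c\cdot\delta(f)$. From Leibniz one reads off $R_m$ by evaluating on cochains pulled back from $G_0$, and conversely the formulas above automatically satisfy Leibniz. Locality in your sense (dependence only on faces) does not by itself give $C^\infty(G_0)$-linearity at the level of $R_m$, so the inverse direction of the bijection is not justified as written.
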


\begin{proof}
A map $d_m:C^{i,j}(G,E)\to C^{i+m,j-m+1}(G,E)$ define and is defined by a map $R_m:s^*E_j\to t^*E_{j+m-1}$ via the following formulas:
\begin{itemize}
\item $d_m(c)(g) = (-1)^{j}R_m^{g\tau_m}c(g\iota_i)$, where $g\in G_{m+i}$, $m\neq 1$
\item $d_1(c)(g) = (-1)^{j} R_1^{g\tau_1}c(d_{i+1}(g)) + (-1)^{i+j+1}\sum_{r=0}^i (-1)^{r} c(d_{r}(g))$, where $g\in G_{1+i}$
\end{itemize}
With these correspondences, setting $d=\sum_{m\geq0} d_m$, we can directly check that (RH1) holds if and only if $d$ preserves the normalized cochains, and that (RH2) holds if and only if $d^2=0$.
\end{proof}







Let $E=\bigoplus_{n\in\Z} E_n$ be a graded vector bundle over $G_0$, and $R: G\action E$ be a representation up to homotopy. We write $E^*= \bigoplus_{n\in\Z} E^*_{-j}$ for the dual vector bundle, so $(E^*)_j=(E_{-j})^*$. Given $g=(g_n,\dots,g_1)\in G_n$, write $g^{-1}=(g_1^{-1},\dots,g_n^{-1})\in G_n$ for the inverse chain of composable arrows. The {\bf dual representation} $R^*:G\action E^*$ is the one given by the following formula: 
$$(R^*)_{m}^g(\varphi)(v)=\varphi(R^{g^{-1}}_m(v)) \qquad g\in G_m,\ \varphi\in (E^{x_0}_{-j})^*=(E^*)_j^{x_0},\ v\in E^{x_m}_{-j-m+1}$$
We have
$C^{i,j}(G,E^*) = C^{i}(G, (E^*)_{-j})  =  
\Gamma(G_i, t^*E^*_{j})$
and $C^n(G,E^*)= \bigoplus_{i+j=n}\Gamma(G_i,t^*E^*_{j})$.
Our sign convention differs from that in \cite{ac}. 

\begin{lemma}\label{lemma:d-dual}
If $c\in C^{i,j}(G,E^*)=\Gamma(G_i,t^*E^*_{j})$ is an $E^*$-valued cochain of bidegree $(i,j)$, then  $d_m(c)\in C^{i+m,j-m+1}(G,E^*)=\Gamma(G_{i+m},t^*E^*_{j-m+1})$ is given by
\begin{itemize}
    \item  $d_m(c)(g)(v) =  (-1)^{j}c(g\iota_i) (R_m^{(g\tau_m)^{-1}}(v))$, where $g\in G_{m+i}$, $m\neq 1$
    \item $d_1(c)(g)(v) =  (-1)^{j}c(g\iota_i) (R_1^{(g\tau_1)^{-1}}(v)) + (-1)^{i+j+1}\sum_{r=0}^{i}(-1)^r c(d_rg)(v)$, where $g\in G_{1+i}$.
\end{itemize}
\end{lemma}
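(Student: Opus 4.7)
The plan is to derive the formulas for $d_m$ on $E^*$-valued cochains by applying Lemma \ref{lemma:R-d} directly to the dual representation $R^*:G\action E^*$, and then unwinding the defining formula $(R^*)_m^g(\varphi)(v)=\varphi(R_m^{g^{-1}}(v))$. Since Lemma \ref{lemma:R-d} is a general statement about representations up to homotopy, and $R^*$ is itself such a representation on the graded bundle $E^*$, the formulas for its induced differential are already available, and the only real content is rewriting them in terms of $R$.

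Concretely, I would first take $c\in C^{i,j}(G,E^*)=\Gamma(G_i,t^*E^*_j)$ and plug $R^*$ into the two bulleted formulas of Lemma \ref{lemma:R-d}, obtaining
\[
d_m(c)(g) = (-1)^{j}(R^*)_m^{g\tau_m}\,c(g\iota_i) \qquad (m\neq 1),
\]
\[
d_1(c)(g) = (-1)^{j}(R^*)_1^{g\tau_1}\,c(d_{i+1}g) + (-1)^{i+j+1}\sum_{r=0}^{i}(-1)^r c(d_r g).
\]
Next, I would evaluate both sides at an arbitrary $v\in E^{x_{i+m}}_{-j-m+1}$. Applying the definition of the dual representation with $h=g\tau_m$ and $\varphi=c(g\iota_i)$ yields $(R^*)_m^{g\tau_m}(c(g\iota_i))(v)=c(g\iota_i)(R_m^{(g\tau_m)^{-1}}(v))$, which gives the two formulas in the statement. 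The face terms $c(d_r g)$ in the $m=1$ case do not involve the representation at all, so they pass through the evaluation at $v$ unchanged and keep their signs from Lemma \ref{lemma:R-d}.

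The main technical point to check is that the bidegree bookkeeping is coherent: the operator $(R^*)_m^{g\tau_m}$ must send elements of $(E^*)_{-j}=E^*_j$ to $(E^*)_{-j+m-1}=E^*_{j-m+1}$, so that the resulting section lies in $\Gamma(G_{i+m},t^*E^*_{j-m+1})=C^{i+m,j-m+1}(G,E^*)$ and so that the pairing with $v\in E_{-j-m+1}$ is defined. This is automatic from the definition: $R_m$ raises the degree on $E$ by $m-1$, hence $R_m^*$ lowers the degree on $E^*$ by $m-1$, matching the bidegree shift of $d_m$ inherited from Lemma \ref{lemma:R-d}. The sign conventions carry over verbatim.

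The main obstacle here is purely clerical rather than conceptual: one must be careful that the chosen convention for the inverse chain $g^{-1}=(g_1^{-1},\dots,g_n^{-1})\in G_n$ is the one that makes $R_m^{(g\tau_m)^{-1}}(v)$ land in the fiber on which $\varphi=c(g\iota_i)$ is defined, so that the pairing in the right-hand side makes sense. Once the source/target bookkeeping is tracked and one confirms that $R^*$ satisfies (RH1)--(RH2)—a verification that is standard and was already used implicitly in defining $R^*$—the lemma reduces to substitution into Lemma \ref{lemma:R-d}.
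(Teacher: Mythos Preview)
Your proposal is correct and follows exactly the approach of the paper, whose proof is the single sentence ``It follows by combining the formulas in Lemma~\ref{lemma:R-d} and the definition of $R^*_m$.'' Your write-up simply makes that substitution explicit, including the identification $d_{i+1}g=g\iota_i$ and the bidegree check, which are the only things one has to unpack.
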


\begin{proof}
It follows by combining the formulas in Lemma \ref{lemma:R-d} and the definition of $R^*_m$.
\end{proof}


We mentioned that any VB-groupoid can be split into a 2-term representation up to homotopy via a cleavage.
On the other direction, if $R:G\action E_1\oplus E_0$ is a 2-term representation up to homotopy, then its {\bf semi-direct product} or Grothendieck construction is a VB-groupoid $(G\ltimes_RE)\to G$ with a canonical cleavage \cite{gsm}. This construction sets an equivalence of categories between VB-groupoids and 2-term representations up to homotopy \cite[Thm 2.7]{dho}. 
The next result relates the semi-direct product with the projectable cochains.

\begin{lemma}[\cite{gsm}, Thm 5.6]
Given $G\action (E_1\oplus E_0)$ a 2-term representation up to homotopy, there is a canonical isomorphism $\lambda:C(G,E^*)\cong C_{proj}(G\ltimes E)$.
\end{lemma}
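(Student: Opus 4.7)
My plan is to construct $\lambda$ from an explicit fiberwise description of the nerve of $V = G\ltimes E$, and then match the resulting decomposition of projectable cochains with the bidegree decomposition of $C(G,E^*)$.

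The main preliminary is to describe $V_n\to G_n$ concretely. By the Grothendieck construction for a 2-term representation, a chain in $V_n$ over $g=(g_n,\dots,g_1)\in G_n$ with vertices $x_0,\dots,x_n$ is freely parameterized by a source $e_0\in E_0^{x_0}$ together with homotopy data $\xi_k\in E_1^{x_k}$, $k=1,\dots,n$; composability is automatic because each arrow's target is determined by the representation formulas. Hence $V_n\to G_n$ is a vector bundle with fiber $E_0^{x_0}\oplus\bigoplus_{k=1}^n E_1^{x_k}$, and any $c\in C^n_{lin}(V)$ decomposes as $c=c^{(0)}+\sum_{k=1}^n c^{(k)}$. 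The subbundle $V_n^0\subset V_n$ of chains starting with a zero arrow is cut out by $\{e_0=0,\xi_1=0\}$, so vanishing on $V_n^0$ kills all $c^{(k)}$ with $k\geq 2$. A direct computation of $v\cdot 0_g$ in the Grothendieck construction then reduces the equivariance $c(v)=c(v\cdot 0_g)$ to the identity $c^{(1)}(g_n,\dots,g_2,g_1)=c^{(1)}(g_n,\dots,g_2,g_1g)$ for every $g$ right-composable with $g_1$, forcing $c^{(1)}$ to factor through the face map $d_0: G_n\to G_{n-1}$.

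A projectable cochain is therefore equivalent to a pair $(c^{(0)},c^{(1)})$ where $c^{(0)}\in\Gamma(G_n, s^*E_0^*)$ and $c^{(1)}\in\Gamma(G_{n-1}, s^*E_1^*)$. This matches the bidegree decomposition $C^n(G,E^*)=\Gamma(G_n,t^*E_0^*)\oplus\Gamma(G_{n-1},t^*E_1^*)$ after transport by the dual representation from target-valued to source-valued sections. I would accordingly define $\lambda$ by the explicit formula using the inverses $R_1^{g^{-1}}$ appearing in Lemma \ref{lemma:d-dual}, converting target-valued $(c_0,c_1)$ into the source-valued pieces that pair with $e_0$ and $\xi_1$. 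Bijectivity onto $C_{proj}(V)$ is then immediate from the preceding analysis.

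The cochain map property reduces to comparing the simplicial coboundary $\delta$ on $V$, restricted to $C_{proj}(V)$, with the sum $d_0+d_1+d_2$ from Lemma \ref{lemma:d-dual}: the $d_0$ piece arises from the anchor $R_0:E_1\to E_0$ via compositions in $V$, the $d_1$ piece from the ordinary face maps of $G$ combined with the action $R_1$, and the $d_2$ piece from the higher homotopy $R_2$ appearing as the correction in the Grothendieck composition. The main obstacle will be notational: aligning sign conventions, source/target pullbacks, and the inverses built into the dual representation with the simplicial structure on $V$. The conceptual picture is clean, however—projectability isolates the two summands of $E$ at their correct simplicial positions, and equivariance is precisely what merges them into a coherent simplicial cochain compatible with all face maps of $G$.
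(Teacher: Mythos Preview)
Your approach is correct and is essentially the two-term specialization of the paper's own argument, which does not appear as a proof of this cited lemma but rather as the general Proposition~\ref{prop:lambda-morphism}. Both identify projectable cochains as those supported on the regular indices $\iota_0,\iota_1$ together with the equivariance condition (your computation that $V_n^0=\{e_0=0,\xi_1=0\}$ and that equivariance forces $c^{(1)}$ to factor through $d_0$ is exactly the two-term case of Lemma~\ref{lemma:characterization-projectable}), and both then match with $C(G,E^*)$ via chain inversion.

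One point of precision: the paper's $\lambda$ is defined purely by the inversion map $T_{i,j}(g)=g^{-1}\iota_i$ on chains in $G$, \emph{not} by applying $R_1^{g^{-1}}$ or any piece of the dual representation. The dual representation $R^*$ enters only through the differential on $C(G,E^*)$ (as in Lemma~\ref{lemma:d-dual}), not through $\lambda$ itself. Your phrase ``transport by the dual representation'' is therefore slightly misleading; the correct passage from target-valued to source-valued sections is $c\mapsto c\circ(\cdot)^{-1}$ at the level of $G$-simplices. If instead you tried to transport fiberwise by composing $R_1$ along the whole chain, you would run into the failure of strict associativity of $R_1$ and the map would not be a chain map. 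With $\lambda$ defined via chain inversion, your outline for the cochain-map check (matching $d_0,d_1,d_2$ against the anchor, the face maps plus $R_1$, and the curvature $R_2$) goes through and coincides with the computation the paper records as a direct application of Lemmas~\ref{lemma:d-dual} and~\ref{lemma:d-linear}.
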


Our formulation does not have a shift, contrary to the original one in \cite{gsm}, because we take the dual representations instead of the dual VB-groupoid. 
In the next section, we will extend the above result to more general representations up to homotopy. We will use the generalized semi-direct product, recently introduced in \cite{dht}, that we now recall. 
Write $\chi_i:[0]\to[n]$ for the $i$-th inclusion, and $x_i=(\chi_i)^*:G_n\to G_0$ for the induced {\bf vertex map}. 
Given $\beta:[l]\to[m]$, write $\beta':[l+1]\to[m+1]$ for the map given by $\beta'(0)=0$ and $\beta'(i+1)=\beta(i)+1$, $i>0$.

\begin{definition}[\cite{dht}]\label{def:sdp}
Given $E=\bigoplus_{n\geq0} E_n$ a (bounded) graded vector bundle over $G_0$, and given $R:G\action E$ a representation up to homotopy, the {\bf semi-direct product} $(G\ltimes_R E,d_i,s_j)$ is the simplicial vector bundle over the nerve $NG$ defined by
$$(G\ltimes_R E)_n=\bigoplus_{\substack{[k]\xto\alpha[n] \\ \alpha\text{ injective}\\\alpha(0)=0}} x_{\alpha(k)}^*E_k$$
Write $\pi_\beta:(G\ltimes_RE)_n\to E_l$ for the projection over $x_{\beta(l)}:G_n\to G_0$. The faces $d_i:(G\ltimes_R E)_n\to(G\ltimes_R E)_{n-1}$, $i>0$, and the degeneracies $s_j:(G\ltimes_R E)_n\to(G\ltimes_R E)_{n+1}$ are given by
$$\pi_\beta s_j=\pi_{\sigma_j\beta}\qquad\pi_\beta d_i=\pi_{\delta_i\beta}$$
The face $d_0:(G\ltimes_R E)_n\to(G\ltimes_R E)_{n-1}$ encodes the information of $R$ and is given by
$$\pi_\beta d_0=
\sum_{k=0}^{l+1} (-1)^{l}R_{l+1-k}^{g\beta'\tau_{l+1-k}}\pi_{\beta'\iota_k}
-\sum_{i=1}^{l}(-1)^i \pi_{\beta'\delta_i}$$ 
\end{definition}


The above formulas give a common generalization to the Dold-Kan construction for chain complexes of vector bundles over a manifold, and to the Grothendieck construction or semi-direct product of 2-term representations up to homotopy discussed in \cite{gsm,dho}. See \cite[\S 6]{dht} for further details.  


\begin{remark}\label{rmk:homogeneous-vectors}
Given $g\in G_n$, $\alpha:[k]\to[n]$ and $e\in E_k^{x_{\alpha(k)}(g)}$, we write $(e,\alpha,g)\in (G\ltimes_RE)_n$ for the corresponding homogeneous vector. Then we can rewrite the faces as follows:
$$d_i(e,\alpha,g)=
\begin{cases}
(e,\sigma_{i-1}\alpha,g\delta_i) & i\notin\alpha \\
\sum_{\beta/\beta'\iota_k=\alpha}(-1)^{l}(R_{l+1-k}^{g\beta'\tau_{l+1-k}}(e),\beta,g\delta_0)-\sum_{\beta/\beta'\delta_r=\alpha}(-1)^r(e,\beta,g\delta_0) & i=0\\
 0 & i\in\alpha,\ i>0
\end{cases}$$
When $i=0$, the first sum is over all injective maps $\beta:[l]\to[n-1]$ such that $\beta'\iota_k=\alpha$, and the second sum is over all $\beta$ such that $\beta'\delta_r=\alpha$.
We can visualize each $\alpha$ as a subset of $[n]$, and study how the support changes when applying $d_0$. If $\pi_\beta d_0(e,\alpha,g)\neq0$, then $\beta'$ is  obtained from $\alpha$ by either adding some elements above $\alpha(k)$, or by adding one element before $\alpha(k)$.
$$\begin{tikzpicture}
\draw[thick](6,0)node[below] {$0$} --(0,0) node[below] {$n$};
\draw[fill] (6,0.2) circle (1.8pt);
\draw (6.4,0.2) node {\footnotesize $\alpha$};
\draw[fill] (5.5,0.2) circle (1.8pt);
\draw[fill] (4.5,0.2) circle (1.8pt);
\draw[fill] (4,0.2) circle (1.8pt);
\draw (5.5,0) node[below] {$1$};
\draw (6,0.45) circle (1.8pt);
\draw (6.4,0.45) node {\footnotesize $\beta'$};
\draw[fill,gray] (5.5,0.45) circle (1.8pt);
\draw[fill,gray] (4.5,0.45) circle (1.8pt);
\draw[fill,gray] (4,0.45) circle (1.8pt);
\draw[fill,gray] (3.5,0.45) circle (1.8pt);
\draw (4,0) node[below] {\footnotesize $\alpha(k)$};
\draw[fill,gray] (3,0.45) circle (1.8pt);
\draw[fill,gray] (2,0.45) circle (1.8pt); \draw (2,0) node[below] {\footnotesize $\beta'(l+1)$};
\end{tikzpicture}
\qquad
\begin{tikzpicture}
\draw[thick](6,0)node[below] {$0$} --(0,0) node[below] {$n$};
\draw[fill] (6,0.2) circle (1.8pt);
\draw (6.4,0.2) node {\footnotesize $\alpha$};
\draw[fill] (5.5,0.2) circle (1.8pt);
\draw[fill] (4.5,0.2) circle (1.8pt);
\draw[fill] (4,0.2) circle (1.8pt);
\draw[fill] (3,0.2) circle (1.8pt);
\draw[fill] (2,0.2) circle (1.8pt); 
\draw (6,0.45) circle (1.8pt);
\draw (6.4,0.45) node {\footnotesize $\beta'$};
\draw[fill,gray] (5.5,0.45) circle (1.8pt);
\draw[fill,gray] (4.5,0.45) circle (1.8pt);
\draw[fill,gray] (4,0.45) circle (1.8pt);
\draw[fill,gray] (3.5,0.45) circle (1.8pt);
\draw[fill,gray] (3,0.45) circle (1.8pt);
\draw[fill,gray] (2,0.45) circle (1.8pt);
\draw (5.5,0) node[below] {$1$};
\draw (3.5,0) node[below] {\footnotesize $\beta'(i)$};
\draw (2,0) node[below] {\footnotesize $\alpha(k)$};
\end{tikzpicture}$$
In particular, if $\alpha(1)>1$, there are no $\beta$ such that $\beta'\iota_k=\alpha$, and $d_0(v,\alpha,g)=(v,\sigma_0\alpha,g\delta_0)$.
\end{remark}




Given $R: G\action E$ a representation up to homotopy, the semi-direct product $(G \ltimes_R E) \to G$ is a {\bf higher vector bundle}, namely a simplicial vector bundle that is also a simplicial fibration. Unlike the VB-groupoid case, higher vector bundles may not be isomorphic to semi-direct products, see \cite{dht}.


\section{Linear and projectable cochains}



In this section, we introduce linear and projectable cochains for higher vector bundles arising from a representation up to homotopy. The main result here is Proposition \ref{prop:lambda-morphism}, generalizing \cite[Thm 5.6]{gsm}, and giving the isomorphism a') mentioned in the introduction.

\medskip



\medskip




Let $G$ be a Lie groupoid, and let $V\to G$ be a higher vector bundle, namely a simplicial vector bundle over the nerve that is also a simplicial fibration. As in the VB-groupoid case, a cochain $f\in C^p(V)$ is {\bf linear} if it is fiber-wise linear. This determines a subcomplex $C_{lin}(V)\subset C(V)$.
The {\bf linearization map} $L_V:C(V)\to C_{lin}(V)$, $L(f)(v)=\odv{}{t}_{t=0}{f(tv)}$, with kernel $K_V$, is a retraction as in the VB-groupoid case, and we get the straightforward generalization of Lemma \ref{lemma:splitting-vb}:

\begin{lemma}\label{lemma:splitting}
Given $V\to G$ a higher vector bundle, the linearization map yields  natural direct sum decompositions:
$$C(V)=C_{lin}(V)\oplus K_V \qquad H(V)=H_{lin}(V)\oplus H(K_V).$$
\end{lemma}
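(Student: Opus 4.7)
The plan is to imitate the VB-groupoid argument of \cite[Prop 2.3]{cd}, since the only facts about a VB-groupoid used there are that each level is a vector bundle and that the simplicial structure maps are fiberwise linear—both of which are built into the definition of a higher vector bundle $V\to G$ used here.

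First I would verify that the pointwise formula $L_V(f)(v)=\odv{}{t}\big|_{t=0} f(tv)$ defines a smooth, fiberwise linear cochain on $V_n$. Smoothness follows from smoothness of $f$ together with the vector bundle structure on $V_n\to G_n$, and linearity in $v$ is a direct consequence of the chain rule. If $f$ is already linear then $f(tv)=tf(v)$, whence $L_V(f)=f$, so $L_V$ is an idempotent retraction onto $C_{lin}(V)$. This already gives the splitting $C(V)=C_{lin}(V)\oplus K_V$ with $K_V=\ker L_V$ as graded vector spaces.

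The one substantive step is to show that $L_V$ commutes with the simplicial differential. Since $\delta$ is a signed sum of face pullbacks $d_i^*$, it suffices to note that each face map $d_i\colon V_{n+1}\to V_n$ is a morphism of vector bundles (this is precisely part of the simplicial-vector-bundle condition), so $d_i(tv)=t\, d_i(v)$, and swapping $d_i$ with the derivative at $t=0$ yields $L_V\circ d_i^*=d_i^*\circ L_V$. Therefore $L_V$ is a cochain map, both $C_{lin}(V)$ and $K_V$ are subcomplexes of $C(V)$, and the decomposition descends to cohomology as $H(V)=H_{lin}(V)\oplus H(K_V)$. Naturality in $V$—which is part of what is claimed—is automatic, because any morphism of higher vector bundles $\phi\colon V'\to V$ is fiberwise linear and therefore intertwines the linearization maps, $\phi^*\circ L_V=L_{V'}\circ\phi^*$.

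I do not foresee a real obstacle; the proof is a formal consequence of the fact that higher vector bundles are by definition simplicial objects in the category of vector bundles. The only point requiring vigilance is to use linearity of every face map (not just $d_0$), and to check that the construction does not tacitly need an inverse or unit on the base, as will be the case in the more delicate Propositions \ref{prop:lambda-morphism} and \ref{prop:proj-lin} later on.
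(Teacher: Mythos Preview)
Your argument is correct and is exactly the intended one: the paper does not give a separate proof, presenting the lemma as ``the straightforward generalization of Lemma~\ref{lemma:splitting-vb}'', and your write-up supplies precisely the details behind that remark (idempotency of $L_V$, commutation with each $d_i^*$ via fiberwise linearity of the faces, and naturality).
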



We will not define projectable cochains for general higher vector bundles, but only those arising from a semi-direct product, as in Definition \ref{def:sdp}. Let $R:G\action E$ be a representation up to homotopy on $E=\oplus_{j=0}^N E_j$, and let $G\ltimes_R E$ be the semi-direct product. The linear cochains
$(C_{lin}(G\ltimes_R E),\delta)$ identify with sections of the dual vector bundles
$$C^{n}_{lin}(G\ltimes_R E)=
\Gamma(G_n,\bigg(\bigoplus_{\substack{[k]\xto\alpha[n] \\ \alpha(0)=0}} x_{\alpha(k)}^*E_k\bigg)^*)=
\bigoplus_{\substack{[k]\xto\alpha[n] \\ \alpha(0)=0}}\Gamma(G_n, x_{\alpha(k)}^*E_k^*).$$
Given $\beta:[j]\to[i+j]$ and $c\in\Gamma(G_{i+j}, x_{\beta(j)}^*E_j^*)$, we write $(c,\beta)\in C^{i+j}_{lin}(G\ltimes E)$ for the corresponding $\beta$-homogeneous linear cochain, and we say that $(c,\beta)$ has {\bf bi-degree} $(i,j)$. Thus,
$$C^{i,j}_{lin}(G\ltimes_RE)= \bigoplus_{\substack{[j]\xto\alpha[i+j] \\ \alpha(0)=0}}\Gamma(G_{i+j}, x_{\alpha(j)}^*E_j^*)$$
This natural bigrading allows us to decompose the differential as $\delta=\sum_{m\geq 0} \delta_{m}$, with 
$\delta_m:C^{i,j}_{lin}(G\ltimes_R E)\to C^{i+m,j-m+1}_{lin}(G\ltimes_R E)$.

\begin{lemma}\label{lemma:d-linear}
With the above notations, and writing $(g,v)=\sum_\alpha (g,\alpha,v_\alpha)\in (G\ltimes_RE)_{i+j+1}$,  the differential of a linear cochain is given by:
\begin{itemize}
    \item $\delta_m(c,\beta)(g,v)=(-1)^j c(d_0g)R_{m}^{g\beta'\tau_m}(v_{\beta'\iota_{j-m+1}})$ whenever $m\neq 1$
    \item $\delta_{1}(c,\beta)(g,v)=\sum_{r=1}^{i+j+1}(-1)^r c(d_rg)(v_{\delta_r\beta})+\sum_{r=1}^j (-1)^{r-1}c(d_0g)(v_{\beta'\delta_r})+$\\
    $\phantom{x}\hspace{250pt}+(-1)^j c(d_0g) R_{1}^{g\beta'\tau_1}(v_{\beta'\iota_{j}})$
\end{itemize}
\end{lemma}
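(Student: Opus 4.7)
The plan is to expand $\delta(c,\beta)(g,v)$ directly using the standard simplicial cochain differential together with the explicit face formulas for $G\ltimes_R E$ from Remark~\ref{rmk:homogeneous-vectors}, and then to sort the resulting terms by the bi-degree shift $(m,-m+1)$ that each one induces on $(i,j)$.

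I would first invoke the definition $\delta f(x) = (-1)^{n+1}\sum_{r=0}^{n+1}(-1)^r f(d_r x)$ with $f=(c,\beta)$ of total degree $n=i+j$ and $x=(g,v)\in(G\ltimes_R E)_{i+j+1}$. Because $(c,\beta)$ acts on a homogeneous pair $(h,w)$ as $c(h)(\pi_\beta w)$, the task reduces to computing $\pi_\beta\circ d_r$ for each $r$. For $r\geq 1$, Remark~\ref{rmk:homogeneous-vectors} tells us that $d_r(g,\alpha,v_\alpha)$ vanishes whenever $r\in\alpha([k])$, and otherwise equals $(v_\alpha,\sigma_{r-1}\alpha,d_r g)$. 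Imposing $\sigma_{r-1}\alpha=\beta$ with $r\notin\alpha$ forces $\alpha=\delta_r\beta$, hence $(c,\beta)(d_r(g,v))=c(d_r g)(v_{\delta_r\beta})$. These terms all shift the bi-degree by $(1,0)$ and will feed into $\delta_1$.

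For $r=0$ the computation is richer: Remark~\ref{rmk:homogeneous-vectors} gives
\[\pi_\beta\circ d_0 \,=\, \sum_{k=0}^{j+1}(-1)^j R_{j+1-k}^{g\beta'\tau_{j+1-k}}\pi_{\beta'\iota_k}\,-\,\sum_{i'=1}^{j}(-1)^{i'}\pi_{\beta'\delta_{i'}}.\]
Substituting $m=j+1-k$ identifies the summand carrying $R_m$ for each $m\in\{0,\dots,j+1\}$, while the second sum supplies further face-type terms of shift $(1,0)$. Applying $c(d_0 g)$ and multiplying by the prefactor $(-1)^{i+j+1}$ from the simplicial differential reproduces, after simplification, the $R_m$-summand appearing in the statement.

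To finish, I would organize all contributions by the bi-degree shift: for $m\neq 1$ the only source of a shift $(m,-m+1)$ is the unique $R_m$-term coming from $r=0$, which directly yields the first formula; for $m=1$, the $R_1$-term from $r=0$ combines with the no-$R$ terms from both $r=0$ and $r\geq 1$ to assemble $\delta_1$, matching the second formula. The main obstacle is sign bookkeeping: the prefactor $(-1)^{i+j+1}$, the alternating $(-1)^r$ from the simplicial sum, the $(-1)^j$ and $(-1)^{i'}$ from Remark~\ref{rmk:homogeneous-vectors}, and the reindexing $k\mapsto m$ must all combine to give precisely the $(-1)^j$, $(-1)^{r-1}$ and $(-1)^r$ prescribed in the statement; this is the only delicate point in what is otherwise a direct computation.
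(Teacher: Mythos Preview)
Your proposal is correct and follows essentially the same approach as the paper: expand $\delta(c,\beta)$ via the simplicial differential, plug in the face formulas for $G\ltimes_R E$ (Definition~\ref{def:sdp} / Remark~\ref{rmk:homogeneous-vectors}) to compute each $\pi_\beta\circ d_r$, and then group the resulting terms by the bi-degree shift. One small point: the paper's proof in fact writes $\delta(c)=\sum_{r}(-1)^r c\circ d_r$ without the global prefactor $(-1)^{n+1}$, so the sign bookkeeping is lighter than you anticipate; if you carry the prefactor from the earlier convention through, you will be off by a global $(-1)^{i+j+1}$ from the stated formulas.
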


\begin{proof}
By definition we have $\delta(c)=\sum_{r=0}^{n+1}(-1)^r c\circ d_n$. Using the explicit equations of the face maps of the semi-direct product we get
\begin{align*}
\delta(c,\beta)(g,v)=
\sum_{r=1}^{n+1}(-1)^r c(d_rg)(v_{\delta_r\beta})
&+\sum_{r=1}^l (-1)^{r-1}c(d_0g)(v_{\beta'\delta_r})+\\
&+(-1)^l\sum_{k=0}^{l+1}c(d_0g)R_{l+1-k}^{g\beta'\tau_{l+1-k}}(v_{\beta'\iota_{k}})
\end{align*}
The result follows by grouping the above terms with respect to the bi-grading.
\end{proof}


We introduce next a generalization of projectable cochains for higher vector bundles $V\to G$ which are semi-direct products. 
Given $c=(c_\alpha)_\alpha\in C^{n}_{lin}(G\ltimes_R E)=\bigoplus_{\alpha}\Gamma(G_n, x_{\alpha(k)}^*E_k^*)$, its {\bf support} is $\supp c=\{\alpha:c_\alpha\neq0\}$. we say that an index $\alpha$ is {\bf regular} if $\alpha=\iota_k$ for some $k$.

\begin{definition}\label{def:projectable}
A cochain $c=(c_\alpha)_\alpha\in C^{n}_{lin}(G\ltimes_R E)$ is {\bf projectable} if both $\alpha$ and $\delta(\alpha)$ are supported over regular indices.
\end{definition}

\begin{remark}
If $V\to G$ is a VB-groupoid with core $E_1\oplus E_0$, we can always pick a normal cleavage $\Sigma$, yielding an isomorphism $V\cong G\ltimes_R E$. Then $c\in C^n_{lin}(V)=C^n_{lin}(G\ltimes_RE)=
\Gamma(G_n, x_{0}^*E_0^*) \oplus\bigoplus_{j>0}\Gamma(G_n, x_{j}^*E_1^*)$ is such that $\supp c\subset\{\iota_0,\iota_1\}$ if and only if $c(v)$ only depends on the first arrow of $v$. This shows that our definition is a good generalization of the projectable cochains of VB-groupoids, see \cite[Section 4]{dho}, and the original \cite[Definition 5.1]{gsm}.
\end{remark}


It is clear from the definition that projectable cochains define a subcomplex $C_{proj}(G\ltimes_R E)\subset C_{lin}(G\ltimes_R E)$. The next lemma gives a useful alternative characterization of projectable cochains via an equivariant condition, analogously to the VB-groupoid case. 
Given $g\in G_n$, $(d_0s_k)^kg=g(\sigma_k\delta_0)^k$ is the chain obtained from $g$ by replacing the first $k$ arrows by identities:
$$(d_0s_k)^kg=(x_n\xfrom{g_n} \cdots \xfrom{g_{k+2}}x_{k+1}\xfrom{g_
{k+1}}x_k\xfrom{\id}x_k\xfrom\id\dots\xfrom\id x_k).$$

\begin{lemma}\label{lemma:characterization-projectable}
If $c=(c_\alpha)$ is supported over regular indices, then $\delta(c)$ also is if and only if
 $c_{\iota_k}\in\Gamma(G_n,x^*_kE_k^*)$ does not depend on the first $k$ arrows, namely $c_{\iota_k}(g)=c_{\iota_k}((d_0s_k)^kg)$ for all $0\leq k\leq n$ and $g\in G_n$.
\end{lemma}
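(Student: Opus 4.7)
The plan is to apply Lemma \ref{lemma:d-linear} componentwise, writing $c=\sum_{k=0}^n(c_{\iota_k},\iota_k)$ and $\delta=\sum_{m\geq 0}\delta_m$, and classifying the support of each $\delta_m(c_{\iota_k},\iota_k)$. Since $\iota_k'=\iota_{k+1}$ and $\iota_{k+1}\iota_{k-m+1}=\iota_{k-m+1}$, the explicit formulas show that $\delta_m(c_{\iota_k},\iota_k)$ is supported on the regular index $\iota_{k-m+1}$ for every $m\neq 1$; likewise the last summand of $\delta_1(c_{\iota_k},\iota_k)$ is supported on $\iota_{k+1}\iota_k=\iota_k$. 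Hence the only potential non-regular support of $\delta(c)$ at bi-degree $(n+1-k,k)$ comes from the first two summands of $\delta_1(c_{\iota_k},\iota_k)$.

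A direct simplicial computation yields $\delta_r\iota_k=\iota_{k+1}\delta_r$ as maps $[k]\to[n+1]$ for $r=1,\dots,k$, and $\delta_r\iota_k=\iota_k$ for $r=k+1,\dots,n+1$. So the non-regular support of $\delta(c)$ lies at the indices $\delta_r\iota_k$ with $r=1,\dots,k$, and combining the two coincident contributions, the coefficient at $\delta_r\iota_k$ equals
\[
(-1)^r c_{\iota_k}(d_rg)+(-1)^{r-1}c_{\iota_k}(d_0g)=(-1)^{r-1}\bigl(c_{\iota_k}(d_0g)-c_{\iota_k}(d_rg)\bigr).
\]
No $\delta_m(c_{\iota_l},\iota_l)$ with $l\neq k$ reaches these indices, so $\delta(c)$ is supported on regular indices if and only if $c_{\iota_k}(d_0g)=c_{\iota_k}(d_rg)$ for all $g\in G_{n+1}$, $r=1,\dots,k$ and all $k=0,\dots,n$.

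Finally I would show this vanishing is equivalent to the equivariance stated in the lemma. Assuming equivariance, for $r\leq k$ the chains $d_0g'$ and $d_rg'$ share the $k$-th vertex $x_{k+1}(g')$ and the arrows at positions $k+1,\dots,n$, so $(d_0s_k)^kd_0g'=(d_0s_k)^kd_rg'$ and hence $c_{\iota_k}(d_0g')=c_{\iota_k}(d_rg')$. Conversely, substituting $g'=s_kg$ into the condition and using the simplicial identity $d_ks_k=\id$, the case $r=k$ reads $c_{\iota_k}(d_0s_kg)=c_{\iota_k}(g)$; iterating the resulting invariance of $c_{\iota_k}$ under $d_0s_k$ a total of $k$ times yields $c_{\iota_k}(g)=c_{\iota_k}((d_0s_k)^kg)$.

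The main technical obstacle is the combinatorial bookkeeping in $\delta(c)$: one needs to correctly pair the two sources of potentially non-regular contributions (the first two summands of $\delta_1$) so that they combine with opposite signs into the clean difference $c_{\iota_k}(d_0g)-c_{\iota_k}(d_rg)$, which then translates transparently into the desired equivariance.
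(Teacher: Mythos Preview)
Your proof is correct and follows essentially the same route as the paper: both use Lemma~\ref{lemma:d-linear} to isolate the non-regular part of $\delta(c_{\iota_k},\iota_k)$ via the identity $\delta_r\iota_k=\iota_{k+1}\delta_r$ for $1\le r\le k$, reduce projectability to the condition $c_{\iota_k}(d_0g)=c_{\iota_k}(d_rg)$, translate this into the equivariance statement using the degeneracy $s_k$, and handle the non-homogeneous case by observing that the non-regular supports for different $k$ are disjoint. Your write-up is slightly more explicit than the paper's in spelling out the forward implication and in noting that $\delta_m$ for $m\neq 1$ lands on regular indices, but the argument is the same.
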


\begin{proof}
First, consider a homogeneous cochain $c=(c_k,\iota_k)$. By Lemma \ref{lemma:d-linear}, we have that $\delta(c_k,\iota_k)=c_k'+c''_k$, with $c'_k$ supported on the regular indices, and $c''_k$ given by  
$$c''_k(g,v)=\sum_{r=1}^{k}(-1)^r c_k(d_rg)(v_{\delta_r\iota_k})
+\sum_{r=1}^k (-1)^{r-1}c_k(d_0g)(v_{\iota_{k+1}\delta_r})$$
Then $c$ is projectable if and only if $c''_k=0$, and since $\delta_r\iota_k=\iota_{k+1}\delta_r$, $1\leq r\leq j$, this is the case if and only if 
$c(d_rg)=c(d_0g)$ for every $r$, $1\leq r\leq k$, and every $g\in G_{n+1}$. This is clearly the case if $c$ does not depend on the first $k$ arrows. Conversely, if $c(d_rg)=c(d_0g)$ for every $r$, $1\leq r\leq k$, and every $g\in G_{n+1}$, given $g\in G_n$, we have 
$c(g)=c(d_ks_kg)=c(d_0s_kg)$, and therefore, $c(g)=c((d_0s_k)^kg)$.

Consider now a general cochain $c$ supported on regular indices, so $c=\sum_k(c_{\iota_k},\iota_k)$. 
With the above notations, we have $\delta(c)=\sum_k c'_k+c''_k$. Each $c''_k$ is supported on indices $\alpha:[k]\to[n]$, so the supports of the $c''_k$ are disjoint, and $c$ is projectable if and only if each of the $c''_k$ vanishes. The result follows.
\end{proof}



Given $R:G\action E=\bigoplus_{n=0}^NE_n$ a representation up to homotopy, we will define a map 
$$\lambda:C(G,E^*)\to C_{lin}(G\ltimes_RE)$$
from the $E^*$-valued cochains into the linear cochains of the semi-direct product. This is going to be a chain map and it will preserve the bigrading. Given $i,j$, write $T_{i,j}:G_{i+j}\to G_i$ for the map $T_{i,j}(g)=g^{-1}\iota_i$ that discards the first $j$ arrows and inverts the others:
$$(g_{i+j},\dots,g_1)\in G_{i+j}\quad\overset{T_{i,j}}\mapsto\quad (g_{j+1}^{-1},\dots,g_{i+j}^{-1})\in G_i$$
Since $t\circ T_{i,j}=x_j$, there is a vector bundle isomorphism $T_{i,j}^*t^*E_j=x_j^*E_j$. 
Given $c\in C^{i,j}(G,E^*)=\Gamma(G_i,t^*E^*_j)$, we define $\lambda(c)=(T_{i,j}^*c, \iota_j)\in C_{lin}^{i,j}(G\ltimes_R E)$.
By definition, if $(g,v)\in(G\ltimes_RE)_{i+j}$, then $\lambda(c)(g,v)=c(g^{-1}\iota_i)(v_{\iota_j})$.

\begin{proposition}\label{prop:lambda-morphism}
The morphism $\lambda:C(G,E^*)\to C_{lin}(G\ltimes_R E)$ is injective, its image is the complex of projectable cochains, and it commutes with the differentials: 
$$C(G,E^*)\overset{\lambda}\cong C_{proj}(G\ltimes_RE) \qquad H(G,E^*)\overset{\lambda}\cong H_{proj}(G\ltimes_RE)$$
\end{proposition}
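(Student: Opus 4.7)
The plan is to verify injectivity, characterize the image as the projectable cochains, and check that $\lambda$ intertwines the differentials; the cohomological statement is then formal. Throughout, I would use that $\lambda(c)$ is a bidegree $(i,j)$ cochain supported on the regular index $\iota_j$, whose value $\lambda(c)(g,v)=c(g^{-1}\iota_i)(v_{\iota_j})$ depends only on the last $i$ arrows of $g\in G_{i+j}$.

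For injectivity, the map $T_{i,j}:G_{i+j}\to G_i$, $g\mapsto g^{-1}\iota_i$, is surjective: any $g'\in G_i$ equals $T_{i,j}(g)$ for the $g$ whose last $i$ arrows form the inverse chain $(g')^{-1}$ and whose first $j$ arrows are identities at $t(g')$. Since $v_{\iota_j}$ ranges freely over $E_j^{x_j(g)}=E_j^{t(g')}$, vanishing of $\lambda(c)$ forces $c=0$. For the image, the inclusion $\lambda(C(G,E^*))\subseteq C_{proj}(G\ltimes_RE)$ follows from Lemma~\ref{lemma:characterization-projectable} because $\lambda(c)_{\iota_j}(g)=c(g^{-1}\iota_i)$ depends only on the last $i$ arrows of $g$. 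Conversely, given a projectable cochain $(\tilde c_{\iota_j})_j$, the invariance $\tilde c_{\iota_j}(g)=\tilde c_{\iota_j}((d_0s_j)^jg)$ lets one factor $\tilde c_{\iota_j}$ through the projection $g\mapsto g\tau_i$ and then through inversion, producing a preimage in $C(G,E^*)$.

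The real work is showing $\lambda$ intertwines the differentials. I would compare the explicit formulas in Lemma~\ref{lemma:d-dual} for $d_m$ with those in Lemma~\ref{lemma:d-linear} for $\delta_m$ bidegree by bidegree. For $m\neq 1$, the comparison reduces to two simplicial identities: $T_{i+m,j-m+1}(g)\iota_i=T_{i,j}(d_0g)$, following from $\iota_{i+m}\iota_i=\iota_i$ as maps into $[i+j+1]$, and $(T_{i+m,j-m+1}(g)\tau_m)^{-1}=g\iota_{j+1}\tau_m$, which expresses that inversion swaps the front and back of a chain. For $m=1$ the same identities handle the $R_1$-term, and the remaining boundary terms match after identifying $d_r(T_{i+1,j}(g))=T_{i,j}(d_{i+j+1-r}g)$ for $0\leq r\leq i$ and reindexing $s=i+j+1-r$, which converts signs $(-1)^{i+j+1-s}$ into $(-1)^s$ via the prefactor $(-1)^{i+j+1}$.

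The main obstacle I anticipate is handling the cross terms in the $\delta_1$ formula corresponding to non-regular components $v_{\delta_r\iota_j}=v_{\iota_j'\delta_r}$ for $1\leq r\leq j$: these appear once in the first sum with coefficient $(-1)^r(T_{i,j}^*c)(d_rg)$ and once in the second sum with $(-1)^{r-1}(T_{i,j}^*c)(d_0g)$. What saves us is that for $r\leq j$ the face $d_rg$ does not touch the last $i$ arrows of $g$, so $(T_{i,j}^*c)(d_rg)=(T_{i,j}^*c)(d_0g)$ and the two contributions cancel exactly. This cancellation is really the projectability condition closing under $\delta$, and once it is accounted for $\delta_1(\lambda(c))$ is supported on the single regular index $\iota_j$ and matches $\lambda(d_1c)$ on the nose.
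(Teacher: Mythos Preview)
Your proposal is correct and follows essentially the same route as the paper. The paper argues injectivity and surjectivity onto $C_{proj}$ via the fibers of $T_{i,j}$ together with Lemma~\ref{lemma:characterization-projectable}, exactly as you do, and then says the compatibility with differentials is a straightforward application of Lemmas~\ref{lemma:d-dual} and~\ref{lemma:d-linear}; your bidegree-by-bidegree comparison, including the cancellation of the non-regular $v_{\delta_r\iota_j}$ terms for $1\leq r\leq j$ via the invariance of $T_{i,j}^*c$ under the first $j$ faces, is precisely what that computation amounts to.
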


\begin{proof}
Let $c\in C^{i,j}(G.E^*)=\Gamma(G_i,E^*_j)$.
The map $T_{i,j}:G_{i+j}\to G_i$ is a surjective submersion, and two simplices $g,g'\in G_{i+j}$ are in the same fiber if and only if they only differ on the first $j$-arrows. It follows from Lemma \ref{lemma:characterization-projectable} that $\lambda(c)=(T_{i,j}^*c,\iota_j)$ is projectable. Conversely, if $c'\in \Gamma(G_{i+j},x_j^*E^*_j)$ is projectable, it does not depend on the first $j$ arrows, and is a pullback $c'=T_{i,j}^*(c)$ of a section $c\in\Gamma(G_i,t^*E^*_j)$. This proves that $\lambda:C(G,E^*)\to C_{proj}(G\ltimes_RE)$ is an isomorphism. It remains to check the compatibility with the differentials. Given $c\in C^{i,j}(G,E^*)$, $(g,v)\in(G\ltimes E)_{i+j+1}$ and $m\geq0$, we need to show that $\lambda ( d_m(c))(g,v)=\delta_m(\lambda(c))(g,v)$. The computation is a straightforward application of the formulas in Lemmas \ref{lemma:d-linear} and \ref{lemma:d-dual}.
\end{proof}

\begin{remark}
Projectable cochains are defined intrinsically for any VB-groupoid, but we do not have a similar definition for higher vector bundles. This may be related with the fact that not every higher vector bundle is a semi-direct product, see \cite{dht}. Also, the use of the inversion in our formula for $\lambda$ would not be available in the more general setting of representations up to homotopy of higher Lie groupoids.
\end{remark}



\section{Filtrating by the regularity of the indices}


Let $G$ be a Lie groupoid, $E=\bigoplus_{n\geq0}E_n$ a (bounded) graded vector bundle over $G_0$, and $R:G\action E$ a representation up to homotopy. The main result of this section is Proposition \ref{prop:proj-lin}, proving that the inclusion 
$C_{proj}(G\ltimes E)\to C_{lin}(G\ltimes E)$
is a quasi-isomorphism, the b') of the introduction. The proof, which is quite technical, is inspired by the two-term case, originally proven in \cite[Lemma 3.1]{cd}.

\medskip


Recall that  $C^n_{lin}(G\ltimes E)=\bigoplus_{[k] \overset{\alpha}\to [n]}\Gamma(G_n,x^*_{\alpha(k)}E^*_k)$, where the sum is over the injective maps $\alpha:[k]\to [n]$ preserving 0. 
If $c\in C^n_{lin}(G\ltimes E)$, we write $c=\sum_\alpha (c_\alpha,\alpha)$, where $c_\alpha\in \Gamma(G_n,x^*_{\alpha(k)}E^*_k)$ for each $\alpha$.  We say that an index $\alpha$ is {\bf $m$-regular} if $\alpha(i)=i$ for every $i\leq \min(k,m)$. If $m\geq k$ this means that $\alpha=\iota_k:[k]\to[n]$ is the inclusion, and if $m\leq k$ this means that $\alpha(m)=m$, so $\alpha$ does not have a gap before $m+1$.
Every $\alpha$ is $0$-regular, and if $m\geq n-1$, then the only $m$-regular $\alpha$ are the regular ones of the previous section, namely the inclusions $\iota_k:[k]\to[n]$.

\begin{example}
When $n=4$, there are 16 indices $\alpha:[k]\to[4]$, which we can identify with subsets of $\{4,3,2,1,0\}$ containing $0$. We classify them according to their maximal regularity:
\begin{center}
\begin{tabular}{ |r|r|r|r| } 
 \hline
 3-regular & 2-regular & 1-regular & 0-regular \\ 
 \hline
 0 & 4210 & 310 & 20\\ 
 10 &  & 410  & 30\\ 
 210 & & 4310 & 40 \\ 
 3210 & & & 320 \\ 
 43210 & & & 420 \\ 
      & & & 430 \\ 
      & &   & 4320\\
 \hline
\end{tabular}
\end{center}
\end{example}


Write $C=C_{lin}(G\ltimes_RE)$ for short. 

\begin{definition}\label{def:m-projectable}
We say that $c\in C^n$ is {\bf $m$-regular} if it is supported on $m$-regular indices, and we say that $c$ is {\bf $m$-projectable} if both $c$ and $\delta(c)$ are $m$-regular. We write $F_mC^n\subset C^n$ for the subspace of $m$-regular $n$-cochains, and $\hat F_mC\subset C$ for the subcomplex of $m$-projectable cochains. 
\end{definition}


Note that $c\in F_mC^n$ if and only if $c(e,\alpha,g)=0$ for every $\alpha:[k]\to[n]$ that is not $m$-regular. 
Generalizing Lemma \ref{lemma:characterization-projectable}, we can characterize the $m$-projectable cochains as the $m$-regular cochains satisfying the following equivariant condition.

\begin{lemma}\label{lemma:characterization-m-projectable}
Let $c=\sum_{\substack{\alpha:[k]\to[n]\\ \alpha\text{ $m$-regular}}}(c_\alpha,\alpha)\in F_mC^n$. 
Then
$c\in \hat F_mC^n$, or equivalently $\delta(c)\in F_mC^{n+1}$, if and only if for every $\alpha$ the section
$c_\alpha$ does not depend on the first $\min(k,m)$ arrows. 
\end{lemma}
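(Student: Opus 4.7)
The plan is to generalize the two-step strategy used for Lemma~\ref{lemma:characterization-projectable}: isolate the non-$m$-regular contributions of $\delta(c)$, identify the matching pairs whose cancellation is equivalent to the desired equivariance, and check that such pairs cannot interfere between different homogeneous components of $c$.

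First, I decompose $c=\sum_\alpha (c_\alpha,\alpha)$ and analyse $\delta(c_\alpha,\alpha)$ separately for each $m$-regular index $\alpha:[k]\to[n]$ via the explicit formulas of Lemma~\ref{lemma:d-linear}. A direct index chase exploiting $\alpha(s)=s$ for $s\leq\min(k,m)$ yields three observations: (i) for $1\leq r\leq n+1$, the index $\delta_r\alpha$ is $m$-regular iff $r>\min(k,m)$; (ii) for $1\leq r\leq k$, the index $\alpha'\delta_r$ is $m$-regular iff $r>\min(k,m)$; (iii) the indices $\alpha'\iota_\ell$ that appear in the $R_1$ summand of $\delta_1$ and in $\delta_p$ for $p\geq 2$ are $m$-regular whenever $\alpha$ is, because $\alpha'(s)=s$ already holds for $s\leq\min(k,m)+1$. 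Consequently, the non-$m$-regular portion of $\delta(c_\alpha,\alpha)$ comes exclusively from the first two sums in $\delta_1(c_\alpha,\alpha)$, restricted to the range $1\leq r\leq\min(k,m)$.

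Next, I verify the identity $\delta_r\alpha=\alpha'\delta_r$ in that range by a position-by-position comparison using $\alpha(s)=s$ for $s\leq\min(k,m)$. This pairs the two non-$m$-regular contributions at a single index, giving
\[
(-1)^r c_\alpha(d_r g)(v_{\delta_r\alpha})+(-1)^{r-1}c_\alpha(d_0 g)(v_{\alpha'\delta_r})=(-1)^{r-1}\bigl[c_\alpha(d_0 g)-c_\alpha(d_r g)\bigr](v_{\delta_r\alpha}).
\]
To rule out cross-cancellation between different $\alpha$'s, I note that the pair $(\alpha,r)$ can be reconstructed from $\delta_r\alpha$ in this range: the unique jump with $(\delta_r\alpha)(r-1)=r-1$ and $(\delta_r\alpha)(r)=r+1$ pinpoints $r$, and $\alpha$ is then recovered. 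Hence $\delta(c)\in F_m C^{n+1}$ iff every such pair vanishes independently, i.e., iff $c_\alpha(d_r g)=c_\alpha(d_0 g)$ for all $m$-regular $\alpha:[k]\to[n]$, all $g\in G_{n+1}$, and all $1\leq r\leq\min(k,m)$.

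Finally, I translate this pairwise equivariance into the stated condition. Using the simplicial identity $d_r s_r=\id$ iteratively, the equalities $c_\alpha(d_r g)=c_\alpha(d_0 g)$ for $1\leq r\leq \min(k,m)$ imply $c_\alpha(h)=c_\alpha((d_0 s_k)^{\min(k,m)}h)$ for every $h\in G_n$, i.e., $c_\alpha$ is invariant under replacing the first $\min(k,m)$ arrows with units; the converse is immediate. This mirrors the concluding step of Lemma~\ref{lemma:characterization-projectable}. The main obstacle here is the bookkeeping in points (i)--(iii) and the reconstruction of $(\alpha,r)$ from $\delta_r\alpha$: once those combinatorial facts are settled, the argument is a conceptually direct extension of the VB-groupoid case.
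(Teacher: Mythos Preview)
Your proposal is correct and follows essentially the same approach as the paper's proof: both isolate the non-$m$-regular part of $\delta(c_\alpha,\alpha)$ as the two sums over $1\leq r\leq\min(k,m)$, use the identity $\delta_r\alpha=\alpha'\delta_r$ to pair them at a single index, argue that these supports are pairwise disjoint across different $m$-regular $\alpha$'s, and then reduce to the equivariance argument of Lemma~\ref{lemma:characterization-projectable}. Your explicit verification of points (i)--(iii) and of the reconstruction of $(\alpha,r)$ from $\delta_r\alpha$ just spells out what the paper leaves implicit.
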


\begin{proof}
The proof is completely analogous to that of Lemma \ref{lemma:characterization-projectable}. Given $c=(c,\alpha)$ an homogeneous $m$-regular $n$-cochain, so $\alpha:[k]\to[n]$ is $m$-regular, using the formulas in Lemma \ref{lemma:d-linear}, we see that $\delta(c,\alpha)=c'+c''$ with $c'\in F_mC$, and $c''$ is given by
$$c''(g,v) = \sum_{r=1}^{\min(k,m)}(-1)^r c(d_rg)(v_{\delta_r\alpha})+
\sum_{r=1}^{\min(k,m)}(-1)^{r-1} c(d_0g)(v_{\alpha'\delta_r}).$$
Note that $\delta_r\alpha=\alpha'\delta_r$ for each $1\leq r\leq \min(k,m)$. Then $c$ is $m$-projectable if and only if $c''=0$. The same arguments as in Lemma \ref{lemma:characterization-projectable} show that $c''=0$ if and only if $c$ does not depend on the first $\min(k,m)$ arrows. 
For non-homogeneous cochains, performing the above decomposition for each term $(c_\alpha,\alpha)$, we get $\delta(c)=c'+\sum_\alpha c''_\alpha$ with $c'\in F_mC$.
Moreover, since $\supp c''_\alpha=\{\delta_r\alpha:1\leq r\leq \min(k,m)\}$, we have that $\supp c''_\alpha\cap \supp c''_{\beta}=\emptyset$ whenever $\alpha,\beta$ are $m$-regular, namely these supports are pairwise disjoint. Then $c$ is $m$-projectable if and only if $c''_\alpha=0$ for all $\alpha$, or equivalently, if $c_\alpha$ does not depend on the first $k$ arrows for every $\alpha$. 
\end{proof}


We have $\hat F_0C^n=F_0C^n=C_{lin}^n(G\ltimes_RE)$ for every $n\geq 0$.
And if $m\geq n$, then $c\in\hat F_mC^n$ if and only if $c$ and $\delta(c)$ are supported on the inclusions $\iota_k$, so $\hat F_mC^n=C^n_{proj}(G\ltimes_RE)$. 
This way we have a decreasing filtration by subcomplexes of $C_{lin}(G\ltimes E)$ which stabilizes on each degree:
$$\hat F_0C=C_{lin}(G\ltimes E)\supset \hat F_1C\supset\dots\supset
\hat F_mC\supset\dots\supset C_{proj}(G\ltimes E)$$
The main result of this section is the following.

\begin{proposition}
\label{prop:proj-lin}
Each of the above inclusions is a quasi-isomorphism. In particular, the inclusion $C_{proj}(G\ltimes_RE)\to C_{lin}(G\ltimes_RE)$ is a quasi-isomorphism.
\end{proposition}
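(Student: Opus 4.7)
The plan is to establish the stronger statement that for every $m\geq 0$ the inclusion $\hat F_{m+1}C\hookrightarrow \hat F_mC$ is a quasi-isomorphism. Since any injective $\alpha:[k]\to[n]$ has $k\leq n$, the filtration stabilizes in each cohomological degree (namely $\hat F_mC^n=C^n_{proj}(G\ltimes_RE)$ once $m\geq n$), so composing finitely many such quasi-isomorphisms in each degree yields the proposition.

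To handle one step of the filtration, I would construct a deformation retraction of $\hat F_mC$ onto $\hat F_{m+1}C$: a retraction $\rho_m:\hat F_mC\to \hat F_{m+1}C$ (projecting out the components indexed by $\alpha$ that are $m$-regular with $\alpha(m+1)>m+1$) together with a chain homotopy $h_m$ of degree $-1$ satisfying $\text{id}-\iota\rho_m=\delta h_m+h_m\delta$. The natural candidate for $h_m$ is built from the degeneracy $s_m:G_{n-1}\to G_n$, which inserts an identity arrow at position $m$: given a homogeneous summand $(c_\alpha,\alpha)$ with $\alpha$ failing to be $(m+1)$-regular, Lemma \ref{lemma:characterization-m-projectable} guarantees that $c_\alpha$ does not depend on the first $m$ arrows, so the pull-back along $s_m$ is unambiguous and can be paired with an $m$-regular index $\tilde\alpha:[k-1]\to[n-1]$ obtained by removing the slot that produced the gap. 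The 2-term case \cite[Lemma 3.1]{cd} corresponds to $m=0$ and serves as the template: the degeneracy $s_0$ there plays the role of $s_m$ here.

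The main technical obstacle will be verifying the homotopy identity $\text{id}-\iota\rho_m=\delta h_m+h_m\delta$ in the presence of the elaborate $d_0$-face. By Definition \ref{def:sdp} and Lemma \ref{lemma:d-linear}, the differential on linear cochains couples the purely combinatorial boundary with the higher structure maps $R_k$, so $\delta h_m(c)$ and $h_m\delta(c)$ each produce a sizable collection of terms mixing pull-backs along $s_m$ with the operators $R_k^g$. Showing that all the extraneous contributions cancel should follow the same pattern as the defining structure equation (RH2) of a representation up to homotopy, once the sums are organized according to how a typical index is split by $s_m$ in Remark \ref{rmk:homogeneous-vectors}. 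In parallel, one must check that $h_m(c)$ is still $m$-regular and still satisfies the equivariance of Lemma \ref{lemma:characterization-m-projectable}, so that $h_m$ genuinely maps $\hat F_mC$ to itself rather than merely to $F_mC$. Once the homotopy formula is verified on each summand, the quasi-isomorphism follows by the standard zig-zag argument and the first paragraph completes the proof.
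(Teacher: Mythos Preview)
Your filtration strategy is right and matches the paper, but the homotopy you sketch is not well-defined and the paper's actual mechanism is different from what you describe. Concretely, you propose that $h_m$ send a summand indexed by $\alpha:[k]\to[n]$ to one indexed by $\tilde\alpha:[k-1]\to[n-1]$; but dropping $k$ to $k-1$ moves you from $E_k^*$ to $E_{k-1}^*$, and a pull-back along $s_m$ alone cannot do that. If you intended $\tilde\alpha:[k]\to[n-1]$ you need to say which map it is and why it is injective, and then the picture changes: the homotopy keeps the $E$-grading and only shifts the simplicial degree, which is what the paper does.

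The paper does not build a one-step deformation retraction. It defines $h:(G\ltimes E)_n\to(G\ltimes E)_{n+1}$ by $(e,\alpha,g)\mapsto \pm(e,\delta_{m+1}\alpha,g\eta)$, where $\eta:[n+1]\to[n]$ agrees with $\sigma_m$ except that $\eta(m)=n$; in particular $g\mapsto g\eta$ uses the groupoid inversion, not the degeneracy $s_m$. The induced chain map $I=\id+h^*\delta+\delta h^*$ is shown to preserve $\hat F_mC$ (Lemma~\ref{lemma:I-filtration}) and to strictly decrease a \emph{defect} $\ell(c)=\max\{\alpha(m+1)-(m+1):c_\alpha\neq 0\}$ (Proposition~\ref{prop:defect}); one then iterates $I$ roughly $n$ times to land in $\hat F_{m+1}C^n$. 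The cancellations behind both steps hinge on comparing pairs like $g\eta\delta_0$ versus $g$, or $g\delta_0\eta$ versus $g\eta\delta_{m+1}$, which differ only in their first $m$ arrows so that Lemma~\ref{lemma:characterization-m-projectable} applies; the choice $\eta(m)=n$ is exactly what forces this. If you want to push through a degeneracy-based homotopy instead, you would have to redo the analogue of Lemma~\ref{lemma:H} for your operator, check it still preserves $\hat F_mC$, and either prove the defect collapses in one step (unlikely) or accept an iteration as the paper does.
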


Let us fix $m$ for the rest of the section.
We need to show that the inclusion $\hat F_{m+1}C\to \hat F_mC$ is a quasi-isomorphism.  
This proof is quite technical, involves several definitions and computations, and will demand the next few pages.



For each $n>m$, write $\eta:[n+1]\to[n]$ for the function given by $\eta(i)=i$ if $i<m$, $\eta(m)=n$, and $\eta(i)=i-1$ if $i>m$. We extend this definition to $n\leq m$ by setting $\eta=\sigma_n:[n+1]\to[n]$, though we are mainly interested in the case $n>m$. We define a vector bundle map
$$h:(G\ltimes E)_n\to (G\ltimes E)_{n+1}
\qquad
h(e,\alpha,g)=\begin{cases}
(-1)^{m+1}(e,\delta_{m+1}\alpha,g\eta) & \alpha(k)>m\\
(0,g\eta) & \alpha(k)\leq m
\end{cases}$$
Note that when $n>m$ the base map $G_{n}\to G_{n+1}$, $g\mapsto g\eta$, makes sense for $G$ has an inversion. Thus, if $g=(g_n,\dots,g_1)$, then $g\eta=(g_n,\dots,g_{m+1},(g_n\cdots g_{m+1})^{-1},(g_n\dots g_m),g_{m-1},\dots,g_1)$.

The above vector bundle map yields a degree -1 operator $h^*:C^{n+1}_{lin}(G\ltimes_RE)\to C^{n}_{lin}(G\ltimes_RE)$, $h^*(c)(v)=c(h(v))$, 
which we use to twist the identity of $C=C_{lin}(G\ltimes_RE)$, so as to get a {\bf regularization map} $I=\id+h^*\delta+\delta h^*:C\to C$, a chain map homotopic to the identity:
$$I:C^n\to C^n \qquad I(c)=c+ h^*\delta(c)+\delta h^*(c)=c+ 
\sum_{r=0}^{n+1} (-1)^r c d_r h + \sum_{r=0}^n (-1)^r c h d_r$$

Iterations of $I$ will play the role of a homotopy inverse for the inclusion $\hat F_{m+1}C\to\hat F_{m}C$. As a preparation, the next lemma provides identities relating $h$, the face maps $d_r$, and the $m$-regular cochains $c$.



\begin{lemma}\label{lemma:H}
Let $c\in F_mC^n$ be an $m$-regular cochain.
\begin{enumerate}[a)]
	\item If $m+1<r$ then $c h d_r \in F_mC^n$ and if $m<r$ then $c d_r h\in F_mC^n$.
    \item If $m<r<n$ then $h d_r=d_{r+1} h$, and therefore $ch d_r = c d_{r+1} h$.
    \item If $0< r\leq m$ then $cd_rh =0$.
    \item If $0< r\leq m$ and $ch d_r(e,\alpha,g)\neq0$ then $n-1>m$, 
	$[m+1]\setminus\alpha=\{r\}$ and 
	$$ch d_r(e,\alpha,g)=(-1)^{m+1}c(e,\delta_{m+1}\sigma_{r-1}\alpha,g\delta_r\eta).$$
    \item  If $\alpha$ is not $m+1$-regular and $cd_0 h(e,\alpha,g)\neq 0$ then $n-1>m$, $\alpha$ is $m$-regular and 
$$c d_0 h(e,\alpha,g)= -c(e,\alpha,g\eta\delta_0).$$
    \item If $\alpha$ is not $m+1$-regular and $c h d_0(e,\alpha,g)\neq 0$ then $n-1>m$, $[m+1]\setminus\alpha=\{r\}$  and
$$c h d_0(e,\alpha,g)=(-1)^{m+r}c(e,\delta_{m+1}\sigma_{r-1}\alpha,g\delta_0\eta).$$
\end{enumerate}
\end{lemma}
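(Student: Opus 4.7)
The plan is to proceed by a systematic case analysis, anchored in the explicit formulas of Remark~\ref{rmk:homogeneous-vectors} for the faces $d_r$ on $(G\ltimes_RE)_n$ and the piecewise definition of $h$ (the branch $\alpha(k)>m$ versus $\alpha(k)\leq m$). The first step is to record the identities that will do the heavy lifting: $\sigma_{r-1}\delta_{m+1}=\delta_m\sigma_{r-1}$ for $r\leq m$, and $\sigma_r\delta_{m+1}=\delta_{m+1}\sigma_{r-1}$ for $r>m+1$; together with the equality of (non-order-preserving) functors $\eta\delta_{r+1}=\delta_r\eta$ for $m<r<n$, a direct check from the definition of $\eta$ that makes sense on $G$-simplices because $G$ is a groupoid. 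These are the combinatorial skeleton that makes every subsequent verification essentially mechanical.

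For parts (a) and (b), the observation is that when $r>m$ the face $d_r$ acts disjointly from the middle block $\{m,\dots,n\}$ that $h$ modifies. Thus $hd_r(e,\alpha,g)=d_{r+1}h(e,\alpha,g)$ will follow by applying the three identities above in each of the branches of $h$, yielding (b); and (a) is a matter of noting that both $\delta_{m+1}$ (from $h$) and $\sigma_{r-1}$ (with $r-1\geq m+1$, from $d_r$) fix indices $\leq m$, hence preserve $m$-regularity of the index. Parts (c)--(f), with $0\leq r\leq m$, split on whether $\alpha(k)>m$ (so the fiber part of $h$ is nontrivial) and on whether $r\in\alpha$ (so $d_r$ vanishes by Remark~\ref{rmk:homogeneous-vectors}). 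In (c) the case $\alpha(k)\leq m$ gives $h(e,\alpha,g)=(0,g\eta)$ and $cd_rh=0$ automatically; if instead $\alpha(k)>m$ with $\alpha$ $m$-regular, then $k\geq m+1$, and the resulting index $\sigma_{r-1}\delta_{m+1}\alpha=\delta_m\sigma_{r-1}\alpha$ omits $m$ from its image, hence is not $m$-regular and $c$ vanishes on it. Parts (d)--(f) follow the same template: the non-vanishing of the expression combined with the $m$-regular support of $c$ pins down $\alpha$ (forcing $[m+1]\setminus\alpha=\{r\}$ in (d) and (f), and $\alpha$ being $m$- but not $(m+1)$-regular in (e)), after which the explicit formula is read off by direct substitution.

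The main obstacle will be the sign and combinatorial bookkeeping in (e) and (f), where the face $d_0$ contributes a sum of terms indexed by $\beta$ with $\beta'\iota_k=\alpha$ or $\beta'\delta_r=\alpha$, each weighted by an instance of the representation $R$ and by signs from the $d_0$ formula of Remark~\ref{rmk:homogeneous-vectors}. Isolating, among all these contributions, the single surviving one compatible with the $m$-regular support of $c$ after the interaction with $h$, and balancing the three sources of signs (the $(-1)^r$ from $d_r$, the $(-1)^{m+1}$ built into $h$, and the $(-1)^l$ from $d_0$) to obtain the claimed $(-1)^{m+r}$ in (f), will require patient case tracking, though conceptually nothing beyond what is already implicit in the VB-groupoid case of \cite{cd}.
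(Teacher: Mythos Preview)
Your plan matches the paper's proof essentially step for step: both proceed by direct case analysis on $(e,\alpha,g)$ using the face formulas of Remark~\ref{rmk:homogeneous-vectors} and the two branches of $h$, with the simplicial identities you record doing the work that the paper leaves implicit; in particular your treatment of (b), (d), (e), (f) and the sign bookkeeping is exactly what the paper does (the paper also splits (e) and (f) into the subcases $m=0$ and $m>0$, which you will want to do when you write it out).

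One small gap to flag in your sketch of (c): you handle only the branch ``$\alpha(k)>m$ with $\alpha$ $m$-regular'', but $cd_rh=0$ must hold for \emph{every} $(e,\alpha,g)$, and nothing forces the input index $\alpha$ to be $m$-regular. The repair is immediate and uses the same identity you already wrote down: for any $\alpha$ with $\alpha(k)>m$ the output index is $\sigma_{r-1}\delta_{m+1}\alpha=\delta_m\sigma_{r-1}\alpha$, whose image omits $m$ while its top value is still $\alpha(k)>m\geq k$ in the case $k\leq m$; either way it fails $m$-regularity, so $c$ kills it. Your deduction ``$k\geq m+1$'' from $m$-regularity of $\alpha$ is therefore unnecessary, and dropping that hypothesis closes the case split.
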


\begin{proof}
We repeatedly use the formulas for the face maps of $G\ltimes_RE$ reviewed in Remark \ref{rmk:homogeneous-vectors}.
\begin{enumerate}[a)]
\item 
Consider $(e,\alpha,g)\in C^n$ with $\alpha$ not $m$-regular. Then $h d_r(e,\alpha,g)$ is either $0$ or supported on $\delta_{m+1}\sigma_{r-1}\alpha$, which is not $m$-regular, we have $ch d_r(e,\alpha,g)=0$. This proves that $ch d_r\in F_mC^n$. The proof that $cd_rh\in F_mC^n$ when $m<r$ is analogous.

\item Given $(e,\alpha,g)\in C^n$, if either $r\in\alpha$ or $\alpha(k)\leq m$ then $hd_r(e,\alpha,g)=0=d_{r+1}h (e,\alpha,g)$. If $r\notin\alpha$ and $\alpha(k)>m$ then
$h d_r(e,\alpha,g)=(-1)^{m+1}(e,\delta_{m+1}\sigma_{r-1}\alpha,g\delta_r\eta)$, which equals $d_{r+1}
h(e,\alpha,g)=(-1)^{m+1}(e,\sigma_{r}\delta_{m+1}\alpha,g\eta\delta_{r+1})$.

\item Let $(e,\alpha,g)\in C^n$. If $\alpha(k)\leq m$ then $h(e,\alpha,g)=0$. If $n\geq\alpha(k)>m$ then $d_r h(e,\alpha,g)$ is either 0 or supported on $\sigma_{r-1}\delta_{m+1}\alpha$, which is not $m$-regular, for it does not contain $m$. Thus $cd_r h(v,\alpha,g)=0$. 

\item Since $hd_r(e,\alpha,g)\neq 0$ we have that $r\notin\alpha$ and that $n-1\geq\sigma_{r-1}\alpha(k)>m$. Then $hd_r(e,\alpha,g)$ is supported on $\delta_{m+1}\sigma_{r-1}\alpha$, which has to be $m$-regular, from where $\sigma_{r-1}\alpha\supset[m]$, or equivalently, $[m+1]\setminus\alpha=\{r\}$. The formula follows from those of $d_r$ and $h$.

\item Since $h(e,\alpha,g)\neq0$ we have $n\geq\alpha(k)>m$ and $h(e,\alpha,g)=(-1)^{m+1}(e,\delta_{m+1}\alpha,g\eta)$. 
Suppose first that $m=0$. Since $\alpha$ is not 1-regular, we have $n>1$, $\delta_1\alpha$ is not 1-regular, and $d_0h(e,\alpha,g)= -d_0 (e,\delta_{1}\alpha,g\eta) =-(e,\alpha,g)$.
Suppose now that $m>0$. The support of $d_0h (e,\alpha,g)$ is contained in 
$$\{\beta:\beta'\iota_k=\delta_{m+1}\alpha\}\cup\{\beta:\beta'\delta_i=\delta_{m+1}\alpha\}$$
If $\alpha$ is not $m$-regular then none of these $\beta$ is $m$-regular, contradicting $cd_0h(e,\alpha,g)\neq 
0$. Then $\alpha$ is $m$-regular, and since it is not $m+1$-regular, $n-1>m$. The only $m$-regular $\beta$ is the one with $\beta'\delta_{m+1}=\delta_{m+1}\alpha$, and we can conclude that
$cd_0 h(e,\alpha,g)=(-1)^{m+1} c(d_0(e,\delta_{m+1}\alpha,g\eta))= -c(e,\alpha,g\eta\delta_0)$.

\item The proof is similar to the previous one. Since $hd_0(e,\alpha,g)\neq 0$ and $d_0(e,\alpha,g)\in C^{n-1}$, then $n-1>m$. Suppose first that $m=0$. Since $\alpha$ is not 1-regular, then $[1]\setminus\alpha=\{1\}$, so $r=1$, and $chd_0(e,\alpha,g)=ch(e,\sigma_0\alpha,g\delta_0) = -c(e,\delta_{1}\sigma_0\alpha,g\delta_0\eta)$. Suppose now that $m>0$. The support of $hd_0(e,\alpha,g)$ is contained in
$$\{\delta_{m+1}\beta:\beta'\iota_k=\alpha\}\cup\{\delta_{m+1}\beta:\beta'\delta_i=\alpha\}$$
If $[m+1]\setminus\alpha$ contains at least two elements then none of these $\beta$ is $m$-regular, contradicting $chd_0(e,\alpha,g)\neq0$. Then $[m+1]\setminus\alpha=\{r\}$ for some $r$. The only $m$-regular $\beta$ is the one with $\beta'\delta_r=\alpha$, which can be written as $\delta_{m+1}\sigma_{r-1}\alpha$, and we can conclude that
$ch d_0(e,\alpha,g)=(-1)^{m+r}c(e,\delta_{m+1}\sigma_{r-1}\alpha,g\delta_0\eta)$.
\end{enumerate}
\end{proof}



We now use Lemma \ref{lemma:H} to prove that the regularization map $I$ preserves the filtration $\hat F_mC$.

\begin{lemma}\label{lemma:I-filtration}
$I(\hat F_mC)\subset F_mC$, and therefore $I(\hat F_mC)\subset \hat F_mC$.
\end{lemma}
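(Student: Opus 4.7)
The plan is to prove the main inclusion $I(\hat F_m C)\subset F_m C$; the second inclusion $I(\hat F_m C)\subset \hat F_m C$ then follows automatically, because $I$ commutes with $\delta$ (being chain-homotopic to the identity) and $\delta c \in F_m C\subset \hat F_m C$ trivially via $\delta^2=0$, so $\delta I(c)=I(\delta c)\in F_m C$. For the main inclusion I would fix $c\in\hat F_m C$ and an $\alpha:[k]\to[n]$ that is not $m$-regular, and show
$$I(c)(e,\alpha,g)=c(e,\alpha,g)+\sum_{r=0}^{n+1}(-1)^r c d_r h(e,\alpha,g)+\sum_{r=0}^{n}(-1)^r c h d_r(e,\alpha,g)=0.$$

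The first term vanishes since $c\in F_m C$. For the first sum, Lemma \ref{lemma:H}(c) eliminates $0<r\leq m$, Lemma \ref{lemma:H}(a) eliminates $r>m$, and Lemma \ref{lemma:H}(e) eliminates $r=0$ (the formula there can only be nonzero when $\alpha$ is $m$-regular, against the standing assumption). For the second sum, Lemma \ref{lemma:H}(a) eliminates $r>m+1$; for $r=m+1$, a short direct computation using the identity $\delta_{m+1}\sigma_m\alpha=\alpha$ (valid whenever $m+1\notin\alpha$) reduces $c h d_{m+1}(e,\alpha,g)$ to $(-1)^{m+1}c(e,\alpha,g\delta_{m+1}\eta)$, which again vanishes because $c\in F_m C$ and $\alpha$ is not $m$-regular.

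What remains is a potential pair of nonzero terms at $r=r'\in\{1,\dots,m\}$ (via Lemma \ref{lemma:H}(d)) and at $r=0$ (via Lemma \ref{lemma:H}(f)), both requiring $[m+1]\setminus\alpha=\{r'\}$ to be a singleton. If this condition fails, the two contributions are already zero. If it holds, then both surviving terms are values of $c$ at the common index $\delta_{m+1}\sigma_{r'-1}\alpha$ but on different simplices $g\delta_{r'}\eta$ and $g\delta_0\eta$. Since $[m+1]\setminus\alpha=\{r'\}$ forces the domain $[k']$ of this common index to satisfy $k'\geq m$, Lemma \ref{lemma:characterization-m-projectable} tells us this component of $c$ does not depend on the first $m$ arrows. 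A direct unpacking of $\eta$ (which inserts $g_n\cdots g_m$ and $(g_n\cdots g_{m+1})^{-1}$ at positions $m$ and $m+1$) shows that $g\delta_0\eta$ and $g\delta_{r'}\eta$ coincide in all positions strictly greater than $m$. Thus the two $c$-values agree, and pairing the signs $(-1)^{r'+m+1}$ against $(-1)^{m+r'}$ yields exact cancellation.

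The main technical obstacle is precisely this last step, the simplicial bookkeeping needed to confirm that pre-composition with $\delta_{r'}$ for $1\leq r'\leq m$ versus $\delta_0$ only alters the arrows of $g$ at positions $1,\dots,m$ after reshuffling by $\eta$ — exactly the window where $m$-projectability delivers invariance. The subcase $r'=m$ gives a genuinely different position-$m$ expression from $r'<m$, but both stay inside that controlled window, so the equivariance takes care of everything in one stroke.
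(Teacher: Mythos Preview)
Your proof is correct and follows essentially the same approach as the paper's: kill the first sum via Lemma \ref{lemma:H} a), c), e), then in the second sum eliminate $r>m+1$ by a), dispose of $r=m+1$ directly, and use the singleton condition from d) and f) together with the equivariance of Lemma \ref{lemma:characterization-m-projectable} to cancel the two surviving terms. The only expository gap is that your treatment of $r=m+1$ silently assumes $m+1\notin\alpha$ and $\alpha(k)>m$; the complementary sub-cases ($m+1\in\alpha$ gives $d_{m+1}=0$, and $\alpha(k)\le m$ gives $h=0$) are trivial but should be mentioned.
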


\begin{proof}
Let $c\in \hat F_mC^n$, and let $(e,\alpha,g)\in (G\ltimes_RE)_n$ with $\alpha$ not $m$-regular. Let us prove that $I(c)(e,\alpha,g)=0$, from where $I(c)\in F_mC^n$. By definition,
$$I(c)(e,\alpha,g)=c(e,\alpha,g)+ 
\sum_{r=0}^{n+1} (-1)^r c d_rh(e,\alpha,g) + \sum_{r=0}^n (-1)^r c hd_r(e,\alpha,g).$$
Note that $c(e,\alpha,g)=0$ because $c\in F_mC^n$.
The first sum vanishes by Lemma \ref{lemma:H} a), c) and e).
We need to show that the second sum also vanishes. 
If $n-1\leq m$ then $h(v)=0$ for every $v\in (G\ltimes_RE)_{n-1}$ and we are done. Suppose then that $n-1>m$. We consider two cases.

$\bullet$ Case $[m+1]\setminus\alpha$ has more than one element: Then every term in the second sum vanishes by Lemma \ref{lemma:H} a), d) and f), except perhaps for $chd_{m+1}(e,\alpha,g)$. If either $m+1\in\alpha$ or $\sigma_{m}\alpha(k)\leq m$ then $hd_{m+1}(e,\alpha,g)=0$. If $m+1\notin\alpha$ and 
$\sigma_{m}\alpha(k)> m$ then $hd_{m+1}(e,\alpha,g)$ is supported on $\delta_{m+1}\sigma_m\alpha$, which is not $m$-regular, so $chd_{m+1}(e,\alpha,g)=0$.

$\bullet$ Case $[m+1]\setminus\alpha=\{r\}$ consists of a single element: Then $r\leq m$ because $\alpha$ is not $m$-regular. Since $m+1\in\alpha$ we have $d_{m+1}(e,\alpha,g)=0$. By Lemma \ref{lemma:H} a), d) and f) there are only two surviving terms, those corresponding to $0$ and $r$, so
$$I(c)(e,\alpha,g)=(-1)^{m+r+1}c(e,\delta_{m+1}\sigma_{r-1}\alpha,g\delta_r\eta)+(-1)^{m+r}c(e,\delta_{m+1}\sigma_{r-1}\alpha,g\delta_0\eta)$$
Since $c\in\hat F_mC^n$, this vanishes by Lemma \ref{lemma:characterization-m-projectable}, for $g\delta_r\eta$ and $g\delta_0\eta$ only differ on the first $m$ arrows.

Starting with $c\in \hat F_mC^n$, we showed that $I(c)\in F_mC^n$, but
$\delta(c)\in \hat F_mC^{n+1}$ too, and the same argument as above proves that $I\delta(c)=\delta I(c)\in F_mC^{n+1}$. We conclude that $c\in \hat F_mC^n$.
\end{proof}

Next, we show that iterations of the regularization map $I$  increase the regularity. To that end, given $c\in F_mC^n$ an $m$-regular $n$-cochain, 
we define its {\bf defect} as 
$$\ell(c)=\begin{cases}\max\{\alpha(m+1)-(m+1): c_\alpha\neq 0\} & c\notin F_{m+1}C^n\\ 0 & c\in F_{m+1}C^n\end{cases}$$
The defect $\ell(c)$ is the size of the largest gap after $m$ appearing in the support of $c$. Note that $0\leq \ell(c)\leq n-1$, and that $\ell(c)=0$ if and only if $c\in F_{m+1}C^n$. Thus, we can think of the defect $\ell(c)$ as measuring how far is $c\in F_mC^n$ from being in $F_{m+1}C^n$. By the previous Lemma \ref{lemma:I-filtration}, if $c\in\hat F_mC^n$, then $I(c)\in\hat F_mC^n$ and its defect $\ell(I(c))$ is also well-defined.

\begin{proposition}\label{prop:defect}
Given $c\in \hat F_mC^n$, the defect $\ell(c)$ decreases with $I$, namely $\ell(I(c))<\ell(c)$ if $\ell(c)>0$, and $\ell(I(c))=0$ if $\ell(c)=0$.
\end{proposition}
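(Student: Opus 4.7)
The plan is to show that if $c \in \hat F_m C^n$ and $\alpha:[k] \to [n]$ is an $m$-regular index with $\alpha(m+1) - (m+1) \geq L := \ell(c)$, then $I(c)(e, \alpha, g) = 0$ for every $(e, g)$. The first step is to use the identity $hd_r = d_{r+1}h$ for $m < r < n$ from Lemma \ref{lemma:H} b) to produce a pairwise cancellation between the terms $(-1)^r chd_r$ and $(-1)^{r+1} cd_{r+1}h$. Combined with Lemma \ref{lemma:H} c) and the vanishing in Lemma \ref{lemma:H} d) (which rules out $[m+1]\setminus\alpha = \{r\}$ for $r \leq m$ because $\alpha$ is $m$-regular), the expression $I(c)(e,\alpha,g)$ collapses to the six boundary contributions
\[
c(e,\alpha,g) + cd_0h(e,\alpha,g) + (-1)^{m+1}cd_{m+1}h(e,\alpha,g) + (-1)^{n+1}cd_{n+1}h(e,\alpha,g) + chd_0(e,\alpha,g) + (-1)^n chd_n(e,\alpha,g).
\]

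Next I would dispose of the two top-degree terms. After applying $h$, the index becomes $\delta_{m+1}\alpha$, whose gap at position $m+1$ is $L+1$, exceeding the defect $L$ of $c$. Hence $cd_{n+1}h(e,\alpha,g) = 0$ and $chd_n(e,\alpha,g) = 0$. For the four middle contributions, I would substitute the formulas of Lemma \ref{lemma:H} e) and f) (with $r = m+1$, forced by $[m+1]\setminus\alpha = \{m+1\}$ and $\delta_{m+1}\sigma_m\alpha = \alpha$), and apply the simplicial identity $\sigma_m \delta_{m+1} = \id$ to the $cd_{m+1}h$ term so that its $\alpha$-support is $\alpha$ itself. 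This yields
\[
I(c)(e,\alpha,g) = c(e,\alpha,g) - c(e,\alpha,g\eta\delta_0) + c(e,\alpha,g\eta\delta_{m+1}) - c(e,\alpha,g\delta_0\eta).
\]

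The conclusion then relies on two cancellations, both supplied by the $m$-projectability of $c$ via Lemma \ref{lemma:characterization-m-projectable}, which asserts that $c_\alpha$ depends only on the last $n - m$ arrows of the base chain. Using the paper's explicit expansion of $g\eta$, the chains $g$ and $g\eta\delta_0$ agree at positions $m+1, \ldots, n$ and differ only in the first $m$ positions, so the first two terms cancel. More subtly, both $g\eta\delta_{m+1}$ and $g\delta_0\eta$ carry the arrow $(g_n \cdots g_{m+2})^{-1}$ at position $m+1$ and agree with $g$ at positions $m+2, \ldots, n$, so $c$ assigns them the same value and the last two terms cancel as well. This proves $\ell(I(c)) < L$ when $L > 0$. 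For $\ell(c) = 0$, the same reduction applies to any non-$m+1$-regular $\alpha$, and every surviving term automatically vanishes since it evaluates $c$ either on $\alpha$ itself or on $\delta_{m+1}\alpha$, both of which lie outside $\mathrm{supp}(c) \subset F_{m+1}C$. The main obstacle is the explicit bookkeeping of the three twisted chains $g\eta\delta_0$, $g\eta\delta_{m+1}$, $g\delta_0\eta$, which must be computed using the inversion map in $G$ to verify both cancellation identities; the critical coincidence at position $m+1$ of the latter two is what ultimately drives the whole argument.
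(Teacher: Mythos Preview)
Your proposal is correct and follows essentially the same route as the paper: reduce $I(c)(e,\alpha,g)$ to the six boundary terms via Lemma~\ref{lemma:H} b)--d), cancel the pair $c+cd_0h$ and the pair $(-1)^{m+1}cd_{m+1}h+chd_0$ using the $m$-projectability criterion of Lemma~\ref{lemma:characterization-m-projectable}, and kill the two top-degree terms by noting that the resulting index has gap $L+1>\ell(c)$. One small imprecision: for the top-degree terms the relevant index after applying both $h$ and the face map is $\sigma_n\delta_{m+1}\alpha$ (not $\delta_{m+1}\alpha$ itself), and you also need the easy case split on whether $n\in\alpha$; the paper handles this explicitly, and you should too, together with the trivial case $m\geq n-1$.
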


\begin{proof}

If $m\geq n-1$ then $\hat F_mC^n=\hat F_{m+1}C^n$ and therefore $\ell(c)=\ell(I(c))=0$. Suppose then that $m<n-1$. 
Given $(e,\alpha,g)$ with $\alpha:[k]\to[n]$ an $m$-regular index such that $\alpha(m+1)-m+1\geq\max\{\ell(c),1\}$, we need to show that $I(c)(e,\alpha,g)=0$. 
We have
$$I(c)(e,\alpha,g)=c(e,\alpha,g)+ 
\sum_{r=0}^{n+1} (-1)^r c d_rh(e,\alpha,g) + \sum_{r=0}^n (-1)^r c hd_r(e,\alpha,g).$$
By Lemma \ref{lemma:H} b), c) and d) several cancellations and vanishing occur, and the above sum reduces to only six terms, namely:
\begin{align*}
I(c)(e,\alpha,g) & = 
c(e,\alpha,g)+ cd_{0}h(e,\alpha,g)+\\
& \qquad \qquad + (-1)^{m+1}cd_{m+1}h(e,\alpha,g)+chd_0(e,\alpha,g)+\\
& \qquad \qquad \qquad \qquad +
(-1)^{n+1}cd_{n+1}h(e,\alpha,g)+
 (-1)^nchd_n(e,\alpha,g)
\end{align*}


We claim that each of these lines vanishes.
Regarding the first line, by Lemma \ref{lemma:H} e) we have that $cd_0h(e,\alpha,g)= -c(e,\alpha,g\eta\delta_0)$, and this cancels out with $c(e,\alpha,g)$, for $g\eta\delta_0$ and $g$ only differ on the first $m$ arrows, see Lemma \ref{lemma:characterization-m-projectable}.
The argument for the second line is analogous: by Lemma \ref{lemma:H} f), with $r=m+1$, we have $chd_0(e,\alpha,g)= -c(e,\delta_{m+1}\sigma_{m}\alpha,g\delta_0\eta)= -c(e,\alpha,g\delta_0\eta)$, and this cancels out with 
$(-1)^{m+1}cd_{m+1}h(e,\alpha,g)=c(e,\sigma_m\delta_{m+1} \alpha,g\eta\delta_{m+1})=
c(e,\alpha,g\eta\delta_{m+1})$, 
for $g\delta_0\eta$ and $g\eta\delta_{m+1}$ only differ on the first $m$ arrows.
Finally, each term in the third line vanishes. If $n\in\alpha$ then $d_{n+1}h(e,\alpha,g)$ and $hd_n(e,\alpha,g)$ vanish, 
and if $n\notin\alpha$ then $d_{n+1}h(e,\alpha,g)$ and $hd_n(e,\alpha,g)$ are both supported on $\sigma_n\delta_{m+1}\alpha$, which has a gap after $m$ larger than $\ell(c)$.
\end{proof}


We have everything in place to show the main result of the section, 
Proposition \ref{prop:proj-lin}, which is an immediate corollary of the following.

\begin{proposition}
The inclusion $\hat F_{m+1}C\to \hat F_mC$ is a quasi-isomorphism.
\end{proposition}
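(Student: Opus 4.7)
The plan is to iterate the regularization map $I = \id + h^*\delta + \delta h^*$ and use it as a homotopy equivalence $\hat F_m C \simeq \hat F_{m+1} C$. Since $I$ is a chain map with $I - \id = \delta h^* + h^*\delta$, a telescoping argument (using that $I$ commutes with $\delta$) yields
$$I^N - \id = \delta H_N + H_N \delta, \qquad H_N := \sum_{k=0}^{N-1} h^* I^k.$$
I would first verify that $h^*$, and hence each $H_N$, preserves the subcomplexes $\hat F_m C$ and $\hat F_{m+1} C$. The key observation is that when $\alpha$ is not $j$-regular and $\alpha(k)>m$, the index $\delta_{m+1}\alpha$ is likewise not $j$-regular, so $h^*(c)(e,\alpha,g) = c(h(e,\alpha,g))$ vanishes whenever $c \in F_j C$; combining this with the relation $\delta h^* = I - \id - h^*\delta$ and Lemma \ref{lemma:I-filtration} promotes the conclusion to $\hat F_j C$, for $j \in \{m, m+1\}$.

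For surjectivity on cohomology: given a cocycle $c \in \hat F_m C^n$, I would apply Proposition \ref{prop:defect} iteratively to obtain $\ell(I^N(c)) \leq \max(\ell(c)-N,0)$, so $I^N(c) \in F_{m+1} C^n$ as soon as $N \geq n-1$. Since $\delta c = 0$, also $\delta I^N(c) = I^N \delta(c) = 0$, and hence $I^N(c) \in \hat F_{m+1} C^n$. The identity $c = I^N(c) - \delta H_N(c)$ then shows $[c] = [I^N(c)]$ in $H(\hat F_m C)$, exhibiting the desired preimage in $H(\hat F_{m+1} C)$.

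For injectivity: suppose $c \in \hat F_{m+1} C^n$ is a cocycle with $c = \delta b$ for some $b \in \hat F_m C^{n-1}$. I would set $b' := I^N(b) - H_N(c)$ for $N$ large, and a direct computation using $\delta c = 0$ would yield
$$\delta b' = I^N \delta(b) - \delta H_N(c) = I^N(c) - (I^N(c) - c) = c.$$
To check $b' \in \hat F_{m+1} C^{n-1}$: for $N$ large enough, $I^N(b) \in F_{m+1} C$ by Proposition \ref{prop:defect}; since $c \in F_{m+1} C$ has defect zero, $I^k(c) \in F_{m+1} C$ for every $k$, and hence $H_N(c) \in F_{m+1} C$; furthermore $\delta I^N(b) = I^N(c)$ and $\delta H_N(c) = I^N(c) - c$ also lie in $F_{m+1} C$, giving $b' \in \hat F_{m+1} C^{n-1}$ as required.

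The main technical obstacle will be the careful bookkeeping required to show that $h^*$ preserves each filtration stage $\hat F_j C$ --- essentially a refinement of the case analysis in Lemma \ref{lemma:H}, tracking how $\delta_{m+1}$ interacts with $j$-regularity for $j \in \{m, m+1\}$. Once this is in place, iteration of $I$ drives the defect to zero in finitely many steps, and the standard chain-homotopy argument closes the proof, mirroring the VB-groupoid case treated in \cite{cd, dho}.
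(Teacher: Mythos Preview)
Your proposal is correct and follows essentially the same strategy as the paper: iterate the regularization map $I$ to drive the defect to zero, then use the chain homotopy $h^*$ (telescoped into $H_N$) to identify $[c]$ with $[I^N(c)]$; your version is in fact more explicit than the paper's about why the homotopy stays inside $\hat F_mC$ and $\hat F_{m+1}C$, which is exactly what is needed to make the injectivity step rigorous. One small correction: for the case $j=m+1$, Lemma~\ref{lemma:I-filtration} alone only yields $I(c)\in\hat F_mC$, so to conclude $I(c)\in F_{m+1}C$ (and hence $\delta h^*(c)\in F_{m+1}C$) you must also invoke the $\ell(c)=0$ clause of Proposition~\ref{prop:defect}, which you already use later anyway.
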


\begin{proof}
Fix $n\geq 0$. We want to show that $H^n(\hat F_{m+1}C)\to H^n(\hat F_{m}C)$ is an isomorphism. Given $c\in \hat F_{m}C^n$, the defects of $\ell(c)$ and of $\ell(\delta(c))$ are bounded by $n-1$ and $n$, respectively. Then, by Proposition \ref{prop:defect}, $I^n(c)\in \hat F_{m+1}C^n$ is $m+1$-projectable. The result easily follows from here: If $c\in \hat F_mC^n$ is closed, then $[c]=[I^n(c)]$, and since $I^n(c)\in \hat F_{m+1}C^n$, $H^n(\hat F_{m+1}C)\to H^n(\hat F_mC)$ is an epimorphism; If $c\in \hat F_{m+1}C$ is exact, so $c=\delta(c')$ for some $c'\in \hat F_{m}C^{n-1}$, then $[c]=[I^n(c)]=[\delta(I^n(c'))]$, and since $I^n(c')\in \hat F_{m+1}C^{n-1}$, $H^n(\hat F_{m+1}C)\to H^n(\hat F_mC)$ is a monomorphism. 
\end{proof}


\section{Cohomology of higher Lie groupoids}

This final section, which can be read independently, reviews the basic notions on higher Lie groupoids and hypercovers, following \cite{henriques,zhu,getzler,wolfson}. Our treatment provides some simplifications and improvements, as in Proposition \ref{prop:hom} on the smoothness of spaces of diagrams, Lemma \ref{lemma:nice-family} on the base-change and composition of hypercovers, and Theorem \ref{thm:higher-descent}, the main result, on the invariance of cohomology by hypercovers, for which we provide a geometric proof. As a simple corollary we get the claim c') of the introduction, the last piece of the puzzle.

\medskip


Given $X$ a simplicial manifold, we write $X^K=\hom(K,X)$ for the set of simplicial maps $K\to X$. We say that $X^K$ is {\bf smooth} if the limit $X^K=\lim_{\Delta^n\xto\alpha K}X_n$, which always exists in $\Top$, also exists in $\Man$, and it has the right topology. 
If $K$ is spanned by the simplices $\sigma_i\in K_{n_i}$, $i=1,\dots,r$, then $X^K$ is smooth if and only if $X^K\to \prod_{i=1}^r X_{n_i}$, $\phi\mapsto (\phi(\sigma_i))_i$, is an embedded submanifold. 
Note that $X^{\Delta^n}=X_n$ is always smooth. 
The {\bf horn-space} $X^{\Lambda^n_k}$ is the space of maps $\Lambda^n_k\to X$, and the {\bf matching-space} $X^{\partial\Delta^n}$ is the space of maps $\partial\Delta^n\to X$. Since $\Lambda^0_0=\partial\Delta^0=\emptyset$ we have $X^{\Lambda^0_0}=X^{\partial\Delta^0}=\ast$.

\begin{definition}
A {\bf higher Lie groupoid} $X$ is a simplicial manifold such that the horn spaces $X^{\Lambda^n_k}$ are smooth and the restriction maps $d_{n,k}:X_n=X^{\Delta^n}\to X^{\Lambda^n_k}$ are surjective submersions. It is {\bf $m$-strict} or a {\bf Lie $m$-groupoid} if $d_{n,k}$ is bijective when $n>m$. It is {\bf acyclic} if the matching spaces $X^{\partial\Delta^n}$ are smooth and the restriction maps $\partial_n:X_n\to X^{\partial\Delta^n}$ are surjective submersions. 
\end{definition}



\begin{example}\begin{enumerate}[a)]
    \item A Lie 0-groupoid $M$ is a constant simplicial manifold. A morphism $f:M\to N$ between Lie 0-groupoids is the same as a smooth map. $M$ is acyclic if and only if $M=\ast$.  
    \item A Lie 1-groupoid $G$ is the nerve of a Lie groupoid, as classically defined, see e.g. \cite{bdh}. A morphism of Lie 1-groupoids is the same as a morphism between the corresponding Lie groupoids. A Lie groupoid is acyclic if and only if it is a pair groupoid.
    %
    \item The Dold-Kan construction on a chain complex $((E_n)_{n\geq 0},\partial)$ of vector bundles over a manifold $M$ is a higher Lie groupoid, it is a Lie $m$-groupoid if $E_n=0_M$ for every $n>m$, and it is acyclic if its homology is trivial.
\end{enumerate}
\end{example}


Given $q:X\to Y$ a map of simplicial manifolds, and given $i:K\to L$ an inclusion of simplicial sets, we write $X^K\times_{Y^K}Y^L$ for the space of commutative diagrams as below, and we say that it is smooth if this iterated limit, which always exists in $\Top$, also exists in $\Man$, and it has the right topology.
$$\xymatrix{K \ar[r]^\phi \ar[d]_i & X \ar[d]^q \\ L \ar[r]^\psi & Y}$$
If $K$ is spanned by
the simplices $\sigma_i\in K_{n_i}$, $i=1,\dots,r$, and $L$ is spanned by the $\sigma_i$ and the $\tau_j\in L_{n_j}\setminus K$, $j=1,\dots,s$, then $X^K\times_{Y^K}Y^L$ is smooth if and only if  $X^K\times_{Y^K}Y^L\to \prod_{i=1}^r X_{n_i}\times\prod_{j=1}^s Y_{n_j}$, $(\phi,\psi)\mapsto (\phi(\sigma_i)_i,(\tau_j)_j)$,  is an embedded submanifold. The space $X^K\times_{Y^K}Y^L$ can be smooth even when $X^K$, $Y^L$ or $Y^L$ are not. 
The {\bf relative horn-space} is $X^{\Lambda^n_k}\times_{Y^{\Lambda^n_k}}Y_n$ and the {\bf relative matching space} is $X^{\partial \Delta^n}\times_{Y^{\partial\Delta^n}}Y_n$. 


\begin{definition}
A {\bf fibration} $f:X\to Y$ is a map of simplicial manifolds such that the relative horn spaces $X^{\Delta^n_k}\times_{Y^{\Delta^n_k}}Y_n$ are smooth and the restriction maps $d_{n,k}:X_n\to X^{\Delta^n_k}\times_{Y^{\Delta^n_k}}Y_n$ are surjective submersions. It is {\bf $m$-strict} if $d_{n,k}$ is bijective when $n>m$. A {\bf hypercover} is a map such that the relative matching spaces $X^{\partial \Delta^n}\times_{Y^{\partial\Delta^n}}Y_n$ are smooth and the restriction maps $\partial_n:X_n\to X^{\partial \Delta^n}\times_{Y^{\partial\Delta^n}}Y_n$ are surjective submersions.
\end{definition}


Since $\partial\Delta^0=\Lambda^0_0=\emptyset$, we have 
$X^{\Delta^0_0}\times_{Y^{\Delta^0_0}}Y_0= 
X^{\partial\Delta^0}\times_{Y^{\partial\Delta^0}}Y_0=Y_0$, and the restriction maps $d_{0,0}$, $\partial_0$ identify with $f_0:X_0\to Y_0$. 
A simplicial manifold $X$ is a higher Lie groupoid if and only if the projection $X\to\ast$ is a fibration, and it is acyclic if and only if $X\to\ast$ is a hypercover. 
Set-theoretically, a fibration is a Kan fibration, and a hypercover is an acyclic fibration.
Our fibrations are called {\it stacks} in \cite{wolfson}, with a slight difference, they require $n>0$, so in their stacks $f_0$ need not be a submersion. 


\begin{example}\begin{enumerate}[a)]
    \item A map $f:M\to N$ between smooth manifolds, viewed as a morphism of Lie 0-groupoids, is a fibration if and only if $f$ is a surjective submersion, and it is a hypercover if and only if $f$ is a diffeomorphism.
    \item A Lie groupoid morphism $f:G\to H$, viewed as a morphism between their nerves, is a fibration if and only if it is so in the usual sense, and it is a hypercover if and only if it is also Morita, see e.g. \cite{bdh,dh}.
    \item A chain map $f:E\to E'$ of complexes of vector bundles over $M$ yields a fibration if and only if $f_n:E_n\to E'_n$ is an epimorphism for all $n$, and it is a hypercover if moreover is a quasi-isomorphism.
\end{enumerate}
\end{example}



The following key result has some similar versions in the literature, see \cite[\S 2]{henriques} and \cite[\S 2]{wolfson}. 
Given $L$ a simplicial set, recall that $K\subset L$ is an elementary collapse of $L$, notation $L\searrow^e K$, if there is an isomorphism $L\cong K\cup_{\Lambda^n_k}\Delta^n$, and that $K\subset L$ is a collapse of $L$, notation $L\searrow K$, if $K$ can be obtained from $L$ by a sequence of elementary collapses. For instance $\Delta^n\searrow^e \Lambda^n_k\searrow\ast$.

\begin{proposition}\label{prop:hom}
Let $i:K\to L$ be an inclusion of finite simplicial sets and $f:X\to Y$ a map of simplicial manifolds. Suppose that $Y^L$ is smooth and that one of the following holds:
\begin{enumerate}[a)]
    \item $f$ is a hypercover, or
    \item $f$ is a fibration and $L\searrow K\searrow \ast$.
\end{enumerate}
Then $X^K\times_{Y^K}Y^L$ and $X^L$ are smooth, and $X^L\to X^K\times_{Y^K}Y^L\to Y^L$ are surjective submersions.
\end{proposition}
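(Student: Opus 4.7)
My plan is to prove the proposition by induction on $d=\dim L$, building $L$ by attaching one non-degenerate simplex at a time, first from a basepoint up to $K$, then from $K$ up to $L$. In case a) I would start from $\emptyset$ and add simplices in order of increasing dimension, so that at every step the full boundary of the new simplex already lies in the current subset and the attaching is of the form $\partial\Delta^n\hookrightarrow\Delta^n$. In case b) I would use the prescribed collapses $K\searrow\ast$ and $L\searrow K$ to obtain a sequence of elementary expansions along horns $\Lambda^n_k\hookrightarrow\Delta^n$, starting from $\Delta^0$.

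Setting $Z_M := X^M\times_{Y^M}Y^L$ for intermediate $M\subset L$, each elementary attaching yields a pullback square
$$Z_{M\cup\sigma} \;=\; Z_M \times_{A} X_n,$$
where $A = X^{\partial\Delta^n}\times_{Y^{\partial\Delta^n}}Y_n$ in case a) and $A = X^{\Lambda^n_k}\times_{Y^{\Lambda^n_k}}Y_n$ in case b). The map $X_n\to A$ is a surjective submersion by the hypercover or fibration hypothesis, so once $A$ is known smooth, $Z_{M\cup\sigma}$ is smooth and $Z_{M\cup\sigma}\to Z_M$ is a surjective submersion. Starting from $Z_\emptyset = Y^L$ in case a), or from $Z_{\Delta^0} = X_0\times_{Y_0}Y^L$ in case b) (smooth because $f_0$ is a surjective submersion), and composing along the filtration, I obtain the smoothness of $Z_K = X^K\times_{Y^K}Y^L$ and $Z_L = X^L$ together with the desired surjective submersions $X^L \to X^K\times_{Y^K}Y^L \to Y^L$.

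The main obstacle is verifying smoothness of the attaching data $A$ at every step, since this is itself an instance of the proposition applied to $(\partial\Delta^n,\Delta^n)$ or $(\Lambda^n_k,\Delta^n)$, with $Y^{\Delta^n}=Y_n$ automatically smooth. For $n<d$ this is given directly by the primary inductive hypothesis. The critical case $n=d$ I would handle by a nested filtration: both $\partial\Delta^d$ and $\Lambda^d_k$ have dimension $d-1$, and can themselves be filtered from $\emptyset$ (respectively from $\Delta^0$, using $\Lambda^d_k\searrow\ast$) by attaching simplices of dimension at most $d-1$; every attaching datum that appears in this sub-filtration then lives in dimension strictly less than $d$ and is smooth by the inductive hypothesis, closing the recursion without circularity.
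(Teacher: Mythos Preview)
Your proposal is correct and follows essentially the same strategy as the paper: filter from $\emptyset$ (or $\ast$) through $K$ up to $L$ by cell attachments (case a) or elementary expansions (case b), and observe that each step $Z_{M\cup\sigma}\to Z_M$ is the pullback of the relative matching map $X_n\to X^{\partial\Delta^n}\times_{Y^{\partial\Delta^n}}Y_n$ or the relative horn map $X_n\to X^{\Lambda^n_k}\times_{Y^{\Lambda^n_k}}Y_n$, hence a surjective submersion.

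The one difference is your nested induction on $\dim L$ to establish smoothness of the attaching space $A$. In the paper this is unnecessary: smoothness of the relative matching spaces (resp.\ relative horn spaces) is built into the \emph{definition} of hypercover (resp.\ fibration), so $A$ is smooth by hypothesis and the single filtration argument suffices. Your extra layer is not wrong---in fact it recovers precisely the redundancy noted in the paper's subsequent Remark, that smoothness of $A$ in level $n$ already follows from the surjective-submersion conditions in levels $<n$---but it is not needed for the proposition as stated.
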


\begin{proof}
We consider a decreasing  filtration as follows: 
$$L=F_a\supset F_{a-1}\supset\dots\supset F_b=K\supset\dots\supset F_0=\ast\supset F_{-1}=\emptyset$$ 
In case a), we demand each step to be the adjunction of a cell, namely $F_r=F_{r-1}\cup_{\partial\Delta^{n_r}}\Delta^{n_r}$. 
In case b), we demand each step to be an elementary collapse, namely $F_r=F_{r-1}\cup_{\partial\Lambda^{n_r}_{k_r}}\Delta^{n_r}$.
This leads to a tower of spaces
$$X^L=Y^L\times_{Y^{F_{a}}}X^{F_{a}} \to Y^L\times_{Y^{F_{a-1}}}X^{F_{a-1}}\to\dots\to 
Y^L\times_{Y^K}X^K\to \dots\to Y^L\times_{Y^{F_{-1}}}X^{F_{-1}}=Y^L$$
Each step in the tower is the base-change of a surjective submersion, in case a) of the map $X_{n_r}\to X^{\partial\Delta^{n_r}}\times_{Y^{\partial\Delta^{n_r}}}Y_{n_r}$, in case b) of the map $X_{n_r}\to X^{\Lambda^{n_r}_{k_r}}\times_{Y^{\Lambda^{n_r}_{k_r}}}Y_{n_r}$:
$$\xymatrix{ 
Y^L\times_{Y^{F_r}}X^{F_r} \ar[r] \ar[d] & 
Y^L\times_{Y^{F_{r-1}}}X^{F_{r-1}} \ar[d] \\
X_{n_r} \ar[r] &X^{\partial\Delta^{n_r}}\times_{Y^{\partial\Delta^{n_r}}}Y_{n_r}}
\qquad 
\xymatrix{ 
Y^L\times_{Y^{F_r}}X^{F_r} \ar[r] \ar[d] & 
Y^L\times_{Y^{F_{r-1}}}X^{F_{r-1}} \ar[d] \\
X_{n_r} \ar[r] & X^{\Lambda^{n_r}_{k_r}}\times_{Y^{\Lambda^{n_r}_{k_r}}}Y_{n_r}}$$
It follows inductively that $Y^L\times_{Y^K}X^K$ and that $X^L$ are smooth, and that $X^L\to X^K\times_{Y^K}Y^L$ is a surjective submersion.
\end{proof}



\def\u{\underline}

\begin{corollary}
\begin{enumerate}[a)]
    \item A hypercover $f:X\to Y$ is always a  fibration.
    \item In a fibration every map $f_n:X_n\to Y_n$ is a surjective submersion.
    \item If $X$ is a higher Lie groupoid and $K\searrow \ast$ then $X^K$ is smooth. 
    \item If $X$ is a higher Lie groupoid and $L\searrow K\searrow\ast$ then $X^L\to X^K$ is a surjective submersion.
\end{enumerate}
\end{corollary}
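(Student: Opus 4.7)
Each statement in the corollary is an application of Proposition \ref{prop:hom}, so my plan is simply to identify the correct inclusion $K \subset L$ of finite simplicial sets for each case and verify the mild hypotheses.

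For (a), I apply case a) of the proposition with $K = \Lambda^n_k$ and $L = \Delta^n$; the hypothesis that $Y^L = Y_n$ be smooth holds because $Y$ is a simplicial manifold, and since $f$ is a hypercover, the conclusion yields both the smoothness of $X^{\Lambda^n_k} \times_{Y^{\Lambda^n_k}} Y_n$ and the surjective submersion property of the restriction $X_n \to X^{\Lambda^n_k} \times_{Y^{\Lambda^n_k}} Y_n$, which is precisely the content of being a fibration. For (b), I apply case b) with $K = \Delta^{\{0\}}$ a vertex and $L = \Delta^n$. The collapsibility $\Delta^n \searrow \ast$ is standard, and the proposition explicitly states that the composition $X^L \to X^K \times_{Y^K} Y^L \to Y^L$ is a surjective submersion; under the present identifications this composition coincides with $f_n : X_n \to Y_n$. (The extreme case $n = 0$ is already part of the definition of a fibration, since $\Lambda^0_0 = \emptyset$ forces $d_{0,0}$ to agree with $f_0$.)

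For (c) and (d), I apply case b) to the terminal projection $f : X \to \ast$, which is a fibration precisely because $X$ is a higher Lie groupoid, and for which the hypothesis $Y^L = \ast$ smooth is automatic. In (c), I take the proposition's $L$ to be the given $K$ and its $K$ to be $\ast$, so the required condition $L \searrow K \searrow \ast$ reduces to the given $K \searrow \ast$; the conclusion then gives $X^K$ smooth, as well as a surjective submersion down to $X_0$ as a free bonus. In (d), I apply the proposition directly with the given $K \subset L$, and since $X^K \times_{Y^K} Y^L = X^K \times_\ast \ast = X^K$, the conclusion reads that $X^L \to X^K$ is a surjective submersion.

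No real obstacle arises: the argument is essentially bookkeeping once Proposition \ref{prop:hom} is available. The only point requiring care is the renaming of the simplicial sets when passing between the statement of the corollary and that of the proposition, and (for (b)) recognizing that the composition appearing in the proposition's conclusion is exactly the structure map $f_n$ of the simplicial map $f$.
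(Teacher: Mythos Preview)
Your proposal is correct and follows essentially the same route as the paper: each item is obtained from Proposition~\ref{prop:hom} by the same specializations (in particular $K=\Lambda^n_k$, $L=\Delta^n$ for (a); $K=\ast$, $L=\Delta^n$ for (b); $Y=\ast$ for (c) and (d)). Your added remarks on the $n=0$ case and on the renaming of variables are accurate and do not depart from the paper's argument.
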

 
\begin{proof}
a) follows by taking $L=\Delta^n$ and $K=\Lambda^n_k$. b) follows by taking $L=\Delta^n$ and $K=\ast$. c) follows by taking $Y=\ast$ and $K=\ast$. d) follows by using c) and taking $Y=\ast$. 
\end{proof}


\begin{remark}
It also follows from the proof of Proposition \ref{prop:hom} that if the relative horn spaces $X^{\Lambda^n_k}\times_{Y^{\Lambda^n_k}}Y_n$ are smooth and the restriction maps $d_{n,k}:X_n\to X^{\Lambda^n_k}\times_{Y^{\Lambda^n_k}}Y_n$ are surjective submersions for $n<m$, then the relative horn-spaces $X^{\Lambda^m_k}\times_{Y^{\Lambda^m_k}}Y_m$ are also smooth. This shows some redundancy in the definition of fibrations. The situation with hypercovers is analogous. See \cite[Corollary 2.5]{henriques}, \cite[Lemmas 2.10, 2.15]{wolfson} and \cite[Lemma 2.4]{zhu}.
\end{remark}


A {\bf differentiable $n$-stack} is the class of a higher Lie groupoid under hypercovers. More precisely, if $X,Y$ are higher Lie groupoids, we say that they are Morita equivalent if there is a third higher Lie groupoid and hypercovers $X\from Z\to Y$. The next Lemma shows that this is indeed an equivalence relation. A property of differentiable $n$-stacks is a property of Lie $n$-groupoids that is invariant under hypercovers.

\begin{lemma}\label{lemma:nice-family}
Fibrations and hypercovers between higher Lie groupoids are closed under both base-change and composition.
\end{lemma}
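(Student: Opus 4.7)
My plan is to handle all four cases (base-change and composition, for fibrations and hypercovers) in parallel, using two main tools: the level-wise identity $(X\times_Y Y')^K = X^K\times_{Y^K}(Y')^K$, which rewrites the relative horn or matching space of a pulled-back morphism as an iterated pullback; and the factorization principle that if $k$ and $h\circ k$ are surjective submersions in $\Man$, then so is $h$.

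For the base-change of a fibration $f: X\to Y$ along $g: Y'\to Y$, the pullback $X'=X\times_Y Y'$ exists level-wise since $f_n$ is a surjective submersion by the corollary to Proposition \ref{prop:hom}. I would identify the relative horn space of $f'$ as $(X^{\Lambda^n_k}\times_{Y^{\Lambda^n_k}}Y_n)\times_{Y_n}Y'_n$ and, applying the factorization principle to $X_n\to X^{\Lambda^n_k}\times_{Y^{\Lambda^n_k}}Y_n\to Y_n$, conclude that the right projection is a surjective submersion. Base-changing along $g_n$ then makes this space smooth and identifies the restriction map of $f'$ with the base-change of the surjective submersion $X_n\to X^{\Lambda^n_k}\times_{Y^{\Lambda^n_k}}Y_n$. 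The hypercover base-change case is structurally identical, replacing $\Lambda^n_k$ by $\partial\Delta^n$ throughout and using that the relative matching spaces are smooth by hypothesis. For the composition $g\circ f$ of fibrations, the restriction $X_n\to X^{\Lambda^n_k}\times_{Z^{\Lambda^n_k}}Z_n$ factors through $X^{\Lambda^n_k}\times_{Y^{\Lambda^n_k}}Y_n$: the first arrow is a surjective submersion by $f$, and the second fits into a cartesian square whose bottom row is the surjective submersion $Y_n\to Y^{\Lambda^n_k}\times_{Z^{\Lambda^n_k}}Z_n$ from $g$, making the second arrow a surjective submersion by base-change. The target $X^{\Lambda^n_k}\times_{Z^{\Lambda^n_k}}Z_n$ is automatically smooth here, being a pullback of smooth horn spaces along the surjective submersion $Z_n\to Z^{\Lambda^n_k}$.

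The main obstacle is the composition of hypercovers, where the target $X^{\partial\Delta^n}\times_{Z^{\partial\Delta^n}}Z_n$ is no longer automatically smooth because the matching spaces need not be. My plan here is to invoke descent along the surjective submersion $Y_n\to C:=Y^{\partial\Delta^n}\times_{Z^{\partial\Delta^n}}Z_n$ coming from $g$: set-theoretically the target is the quotient of the smooth manifold $A:=X^{\partial\Delta^n}\times_{Y^{\partial\Delta^n}}Y_n$ by the equivalence relation whose graph is the smooth pullback $A\times_{Y_n}(Y_n\times_C Y_n)$, with source and target projections to $A$ that are surjective submersions. A Godement-type argument then equips the target with a unique smooth structure making the relevant square cartesian in $\Man$, and the second leg of the factorization above becomes a surjective submersion as the base-change of $Y_n\to C$.
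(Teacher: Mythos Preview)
Your base-change arguments and your composition argument for fibrations are correct and, modulo phrasing, match the paper: both you and the paper identify the relative horn/matching space of the pulled-back map as a further pullback and then base-change the restriction map. Your use of the factorization principle to see that $X^{K}\times_{Y^{K}}Y_n\to Y_n$ is a surjective submersion is a clean substitute for the paper's direct appeal to Proposition~\ref{prop:hom}.

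For the composition of hypercovers, however, you take a genuinely different route. The paper treats fibrations and hypercovers uniformly: writing $K=\Lambda^n_k$ or $\partial\Delta^n$ and $L=\Delta^n$, it filters $L$ by cells (as in Proposition~\ref{prop:hom}) and builds a tower
\[
X^L\to\cdots\to Y^L\times_{Y^K}X^K\to\cdots\to Z^L\times_{Z^K}X^K\to\cdots\to Z^L\times_{Z^K}Y^K\to\cdots\to Z^L,
\]
each elementary step being the base-change of a relative matching map of $f$ or $g$. This manufactures the smoothness of $Z^L\times_{Z^K}X^K$ inductively from $Z^L=Z_n$, without ever needing the matching spaces $X^{\partial\Delta^n}$ or $Z^{\partial\Delta^n}$ themselves to be smooth, and it yields the surjective-submersion claim for free.

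Your Godement/descent strategy can be made to work, but as written it has gaps. First, Godement's criterion requires the graph $R$ to be a \emph{closed embedded} submanifold of $A\times A$; you construct $R$ abstractly as a pullback $A\times_{Y_n}(Y_n\times_C Y_n)$ with submersive projections, but you still owe the verification that $(s,t):R\to A\times A$ is a closed embedding. Second, and more seriously, Godement hands you a manifold $A/R$, but the paper's notion of ``smooth'' demands that the iterated limit exist in $\Man$ with the correct topology, i.e.\ that $A/R\hookrightarrow\prod X_{n-1}\times Z_n$ be an embedded submanifold. This is not automatic from Godement and needs an extra argument (for instance, pulling back a test cone $M\to C$ along the submersion $Y_n\to C$ to get a cone into $A$, then descending). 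The paper's cell-by-cell filtration sidesteps both issues entirely.
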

\begin{proof} 
{\it Base-change:}
Let $f:X\to Y$ be either a fibration or a hypercover, and let $K=\Lambda^n_k$ or $K=\partial\Delta^n$, respectively, and $L=\Delta^n$. Given $g:Y'\to Y$ a simplicial map, since each $X_n\to Y_n$ is a surjective submersion, the fibered product $X'=Y'\times_YX$ is a well-defined simplicial manifold. 
The space $Y'^L\times_{Y'^K}X'^K=Y'^L\times_{Y^L}(Y^L\times_{Y^K}X^K)$ is smooth, for the projection $Y^L\times_{Y^K}X^K\to Y^L$ is a surjective submersion by Proposition \ref{prop:hom}.
The restriction map $X'^L\to Y'^L\times_{Y'^K}X'^K$ identifies with
$Y'^L\times_{Y^L}(X^L)\to Y'^L\times_{Y^L}(Y^L\times_{Y^K}X^K)$, which is the base-change of a surjective submersion, again by Proposition \ref{prop:hom}.

{\it Composition:}
Again, let $f:X\to Y$ and $g:Y\to Z$ be either fibrations or hypercovers, and consider $K=\Lambda^n_k$ or $K=\partial\Delta^n$, respectively, while $L=\Delta^n$. 
Pick a filtration of $L$ as in Proposition \ref{prop:hom}.  This gives rise to a tower as follows, 
$$X^L\to \dots \to Y^L\times_{Y^K}X^K\to \dots \to Z^L\times_{Z^K}X^K\to\cdots\to Z^L\times_{Z^K}Y^K\to\cdots\to Z^L$$
We need to show that $Z^L\times_{Z^K}X^K$ is smooth and that $x^L\to Z^L\times_{Z^K}X^K$ is a surjective submersion.
Each of the four segments is decomposed in elementary steps, which are the base-change of a surjective submersion, which shows the result.
\end{proof}




The size of a hypercover can be measured by the fibers of the relative matching maps $\partial_n: X_n\to X^{\partial\Delta^n}\times_{Y^{\partial\Delta^n}}Y_n$. We say that a hypercover $f:X\to Y$ is {\bf $m$-simple} if $\partial_n$ is a diffeomorphism for every $n\neq m$. 

\begin{lemma}\label{lemma:simple}
If $f:X\to Y$ is an $m$-simple hypercover then $X_n\cong X^{\partial\Delta^n}\times_{Y^{\partial\Delta^n}}Y_n=Y_n$ for $n<m$, $\partial_m=f_m:X_m\to M_m(X/Y)\cong Y_m$, and for $n>m$ we have a fibered product
$$\xymatrix{ X_n \ar[r] \ar[d] & \prod X_m \ar[d] \\
Y_n \ar[r] & \prod Y_m}$$
\end{lemma}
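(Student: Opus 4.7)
The plan is to induct on $n$, using that $\partial_n$ is a diffeomorphism whenever $n \neq m$ together with the recursive character of the matching space: $\partial \Delta^n$ is generated by simplices of dimension strictly less than $n$, so the space $X^{\partial \Delta^n}$ is completely determined by the lower-dimensional $X_k$.

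First I would treat the range $n \leq m$ by induction on $n$. The base case $n=0$ is immediate: $\partial \Delta^0 = \emptyset$, so $M_0(X/Y) = Y_0$ and $\partial_0 = f_0$; being $m$-simple with $0 \neq m$ makes $\partial_0$ a diffeomorphism, hence $X_0 \cong Y_0$. For the inductive step with $n < m$, assuming $X_k \cong Y_k$ compatibly via $f_k$ for all $k < n$, the diagram of manifolds and structure maps computing $X^{\partial \Delta^n}$ coincides with the one computing $Y^{\partial \Delta^n}$, so $X^{\partial \Delta^n} \cong Y^{\partial \Delta^n}$ over $f$. Consequently $M_n(X/Y) \cong Y_n$, and since $\partial_n$ is a diffeomorphism, $X_n \cong Y_n$ with $\partial_n$ identified to $f_n$. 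The case $n=m$ is then handled by exactly the same identification of the matching space, because $\partial \Delta^m$ only involves simplices of dimension $< m$, without now claiming invertibility of $\partial_m = f_m$.

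For $n > m$ the matching map $\partial_n$ is again a diffeomorphism, so $X_n \cong M_n(X/Y)$. The plan here is a secondary induction on $n$ to collapse the iterated matching data. A lift of $\partial y \colon \partial \Delta^n \to Y$ to $X$ amounts to lifts on each $(n-1)$-face of $\partial \Delta^n$; by the inductive hypothesis each such lift is itself a tuple of $X_m$-lifts indexed over the $m$-faces of that $(n-1)$-simplex, matching on common lower-dimensional faces. Gluing across all $(n-1)$-faces of $\partial \Delta^n$, and noting that each $m$-face of $\partial \Delta^n$ sits inside multiple $(n-1)$-faces, the data reduces to one $X_m$-lift per $m$-face of $\partial \Delta^n$, with agreement conditions on the faces of dimension $< m$. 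The latter conditions are automatic because $X_k \cong Y_k$ in that range, so $M_n(X/Y)$ is exactly the displayed fibered product.

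The main obstacle is the bookkeeping in the last case: one must verify carefully that iterated compatibility data, a priori present at every dimension between $m$ and $n-1$, really collapses to a single compatibility statement on the $m$-faces of $\partial \Delta^n$ inside $Y_n$. A conceptual shortcut is to observe that above dimension $m$ the simplicial manifold $X$ is the $m$-coskeleton of its $m$-truncation relative to $Y$, from which the displayed fibered product follows formally; alternatively, the inductive unfolding sketched above can be pushed through with more combinatorial care.
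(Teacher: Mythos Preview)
Your proposal is correct and follows essentially the same approach as the paper. The paper's proof is three sentences: induction for $n<m$, the identification $M_m(X/Y)\cong Y_m$ from the previous step, and for $n>m$ a reference to the tower argument in the proof of Proposition~\ref{prop:hom}. Your treatment of $n\leq m$ is the same induction spelled out in more detail. For $n>m$, the paper's tower filters $\Delta^n$ by cells and observes that, for an $m$-simple hypercover, only the $m$-cell attachments contribute, each being a base-change of $X_m\to Y_m$; this is exactly your relative-coskeleton shortcut phrased slightly differently, and is a cleaner way to package the secondary induction you sketch first.
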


\begin{proof}
For $n<m$ the claim follows inductively. Since $X_{m-1}\cong Y_{m-1}$, we have $M_m(X/Y)\cong Y_m$. The final claim is easy, and it follows for instance from the proof of the hom-proposition. 
\end{proof}


Given $X$ a simplicial set and $m\geq 0$, recall that  the {\bf coskeleton} $\cosk_mX$ is the simplicial set whose $n$-simplices are maps $(\cosk_mX)_n=X^{\sk_m\Delta^n}=\hom(\sk_m\Delta^n,X)$ \cite{gj}. 
A simplicial set $X$ is {\bf $m$-coskeletal} if $X=\cosk_mX$. 
If $X$ is an $m$-groupoid, then it is $m+1$-coskeletal. 
Given $f:X\to Y$ and $m$, the {\bf relative coskeleton} is the simplicial set $\cosk_m(X/Y)=\cosk_m X\times_{\cosk_m Y}Y$. Its $n$-simplices are 
$X^{\sk_m\Delta^n}\times_{Y^{\sk_m\Delta^n}}Y^{\Delta^n}$, or in other words, commutative squares
$$\xymatrix{
\sk_m\Delta^n \ar[r] \ar[d] & X \ar[d] \\
\Delta^n \ar[r] & Y
}$$
If $m\geq n$ then $\cosk_m(X/Y)_n=X_n$ and if $m=-1$ then $\cosk_m(X/Y)_n=Y_n$.
Note that a hypercover $f:X\to Y$ is $m$-simple if and only if $\cosk_m(X/Y)=X$ and $\cosk_{m-1}(X/Y)=Y$.
If $X$ and $Y$ are $m$-groupoid, they are $m+1$-coskeletal, and the {\bf coskeleton tower} is 
$$X=\cosk_{m+1}(X/Y)\to \cosk_{m}(X/Y)\to \dots\to \cosk_{1}(X/Y)\to \cosk_{0}(X/Y)\to \cosk_{-1}(X/Y)=Y$$


The following is yet another corollary of Proposition \ref{prop:hom}.

\begin{proposition}\label{prop:cosk}
If $f$ is a hypercover and $m\geq0$ then the relative coskeleton $\cosk_m(X/Y)_n$ is smooth, so $\cosk_m(X/Y)$ is a simplicial manifold.
    Moreover, each map in the relative coskeletal tower $r:\cosk_m(X/Y)\to\cosk_{m-1}(X/Y)$ is a simple hypercover.
\end{proposition}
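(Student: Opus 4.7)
The plan is to apply Proposition \ref{prop:hom} twice. For the first assertion, since $\cosk_m(X/Y)_n=X^{\sk_m\Delta^n}\times_{Y^{\sk_m\Delta^n}}Y^{\Delta^n}$ by definition, the smoothness is immediate from Proposition \ref{prop:hom}(a) applied to the inclusion $\sk_m\Delta^n\hookrightarrow \Delta^n$, using that $f$ is a hypercover and that $Y^{\Delta^n}=Y_n$ is smooth. The faces and degeneracies of $\cosk_m(X/Y)$ are induced by simplicial maps between the $\Delta^n$, hence smooth, so $\cosk_m(X/Y)$ is a simplicial manifold.

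For the second assertion, I would use the adjunction $\hom(K,\cosk_m Z)=\hom(\sk_m K, Z)$ to obtain the formula $\cosk_m(X/Y)^K=X^{\sk_m K}\times_{Y^{\sk_m K}}Y^K$ for any finite simplicial set $K$. Writing $A=\cosk_m(X/Y)$ and $B=\cosk_{m-1}(X/Y)$ and unwinding the definition, the relative matching space of $r$ at level $n$,
$$A^{\partial\Delta^n}\times_{B^{\partial\Delta^n}}B_n,$$
gets identified with $X^{K_n}\times_{Y^{K_n}}Y^{\Delta^n}$, where $K_n\subset\Delta^n$ is the pushout $\sk_m\partial\Delta^n\cup_{\sk_{m-1}\partial\Delta^n}\sk_{m-1}\Delta^n$. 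This identification, and the smoothness of the relative matching space, will follow from Proposition \ref{prop:hom}(a) once we understand $K_n$.

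The computation of $K_n$ splits into three cases, exploiting the combinatorial fact that $\sk_j\Delta^n=\sk_j\partial\Delta^n$ whenever $j<n$. For $n<m$ one finds $K_n=\Delta^n$, so the relative matching map is the identity on $X_n=A_n$; for $n=m$ all three pieces collapse to $\partial\Delta^m$, and the relative matching map is precisely $\partial_m:X_m\to X^{\partial\Delta^m}\times_{Y^{\partial\Delta^m}}Y_m$, which is a surjective submersion because $f$ is a hypercover; for $n>m$ one gets $K_n=\sk_m\Delta^n$, so the relative matching map is the identity on $A_n=\cosk_m(X/Y)_n$. This shows at once that $r$ is a hypercover and that $\partial_n^{r}$ is a diffeomorphism for $n\neq m$, i.e.\ that $r$ is $m$-simple.

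The main obstacle is the bookkeeping in the second paragraph: recognizing the iterated fibered product as maps out of the pushout $K_n$, and carefully identifying what $K_n$ is in each range of $n$. Once the combinatorics of skeleta and boundaries are sorted out, the rest of the argument is an application of tools already established, namely the adjunction defining $\cosk_m$ and Proposition \ref{prop:hom}(a) for the smoothness and submersion claims.
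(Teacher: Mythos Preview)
Your proof is correct and follows essentially the same approach as the paper: both identify the relative matching space of $r$ at level $n$ with $X^{K_n}\times_{Y^{K_n}}Y_n$ for $K_n=\sk_m\partial\Delta^n\cup_{\sk_{m-1}\partial\Delta^n}\sk_{m-1}\Delta^n$, and both reduce to the combinatorial observation that $K_n=\sk_m\Delta^n$ when $n\neq m$ and $K_n=\partial\Delta^m$ when $n=m$. Your use of the adjunction $\hom(K,\cosk_m Z)=\hom(\sk_m K,Z)$ to derive the formula $\cosk_m(X/Y)^K=X^{\sk_m K}\times_{Y^{\sk_m K}}Y^K$ makes the identification more transparent than the paper's diagram chase, but the content is the same.
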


\begin{proof}
The smoothness of $\cosk_m(X/Y)_n$ follows from Proposition \ref{prop:hom}, by taking $K=\sk_m\Delta^n$ and $L=\Delta^n$. Since $\cosk_m(X/Y)_n\subset \prod X_m\times \prod Y_n$ is embedded, the simplicial maps are smooth, and $\cosk_m(X/Y)$ is a well-defined simplicial manifold.
It remains to show that each step in the coskeletal tower is a hypercover.

A point in the relative matching space $\cosk_m(X/Y)^{\partial\Delta^n}\times_{\cosk_{m-1}(X/Y)
^{\partial\Delta^n}}\cosk_{m-1}(X/Y)_n$ is a diagram as below, 
$$\xymatrix@R=0pt{ & \sk_{m-1}\Delta^n \ar[rrd] \ar[ddd] \ar@/^/[rrrd] & & & \\
\sk_{m-1}\partial\Delta^n \ar[ru] \ar[rrd] \ar[ddd]& & & \sk_m\Delta^n \ar@{-->}[r] \ar[ddd]& X \ar[ddd] \\
& & \sk_m\partial\Delta^n \ar[ddd]\ar[ru] \ar@/_/[rru] & \\
& \Delta^n \ar[rrd] \ar@/^/[rrrd] & & & \\
\partial\Delta^n \ar[ru] \ar[rrd] & & & \Delta^n \ar@{-->}[r] & Y \\
& & \partial\Delta^n \ar[ru] \ar@/_/[rru]&
}$$
while a point in $\cosk_m(X/Y)_n$ is a dotted filling. If $n\neq m$ then $\sk_{m-1}\Delta^n\cup\sk_m\partial\Delta^n=\sk_m\Delta^n$ and every diagram has a unique filling, so the matching map $\partial_n$ is an isomorphism. And if $n=m$, the matching map $\partial_m$ identifies with $X_n\to X^{\partial\Delta^n}\times_{Y^{\partial\Delta^n}} Y_n$, hence the result.
\end{proof}





The next theorem shows that a hypercover gives an isomorphism in cohomology. This appears implicitly in \cite{getzler}. We present here an alternative geometric proof.

\begin{theorem}\label{thm:higher-descent}
If $f:X\to Y$ is a hypercover then $f^*:H^\bullet(Y)\to H^\bullet(X)$ is an isomorphism.
\end{theorem}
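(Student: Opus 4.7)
The plan is to reduce to simple hypercovers via the relative coskeletal tower, then handle the simple case by \v{C}ech descent.

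\textbf{Step 1: Reduction.} By Proposition~\ref{prop:cosk} every hypercover $f:X\to Y$ sits in a tower of simple hypercovers $r_m:\cosk_m(X/Y)\to\cosk_{m-1}(X/Y)$ converging to $f$. Since $r_m$ is the identity on $k$-simplices for $k<m$, it induces the identity on $C^k$ for $k<m$, so at a fixed cohomological degree $n$ only the finitely many $r_m$ with $m\le n+1$ can possibly differ from an isomorphism on $H^n$. It therefore suffices to prove the theorem when $f$ itself is simple.

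\textbf{Step 2: Simple case.} Assume $f:X\to Y$ is $m$-simple, so $f_k$ is a diffeomorphism for $k<m$ and $\pi=f_m:X_m\to Y_m$ is a surjective submersion. By Lemma~\ref{lemma:simple}, each $X_n$ with $n>m$ is a fiber product of $Y_n$ with copies of $X_m$ over copies of $Y_m$ indexed by the non-degenerate $m$-simplices of $\Delta^n$; in particular every $f_n:X_n\to Y_n$ is a surjective submersion. Form the bisimplicial manifold $W_{p,q}=X_q\times_{Y_q}\cdots\times_{Y_q}X_q$ ($p+1$ factors), whose columns $W_{\bullet,q}$ are the \v{C}ech nerves of $f_q$. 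Classical \v{C}ech descent for a surjective submersion --- a partition-of-unity argument on $Y_q$ --- shows that the augmented column $C^\infty(Y_q)\to C(W_{\bullet,q})$ is exact, so the column spectral sequence collapses and $\mathrm{Tot}(C(W))\simeq C(Y)$. An extra-degeneracy argument in the other direction, powered by the uniform fiber-product description of $W_{p,\bullet}$ given by Lemma~\ref{lemma:simple}, produces the complementary quasi-isomorphism $C(X)=C(W_{0,\bullet})\hookrightarrow\mathrm{Tot}(C(W))$, and composing the two yields $f^*:H(Y)\cong H(X)$.

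\textbf{Main obstacle.} The delicate point is the second quasi-isomorphism $C(X)\hookrightarrow\mathrm{Tot}(C(W))$: this is where the simple structure of $f$, rather than just its being a hypercover, is used in an essential way, since it lets the higher rows $W_{p,\bullet}$ be described uniformly as fiber products over $Y$ of the same shape as $X$ itself. Constructing an explicit chain-level contraction compatible with both simplicial directions requires careful bookkeeping of the combinatorics of the non-degenerate $m$-simplices of $\Delta^n$ and must respect the face and degeneracy maps of $X$ spelled out in Lemma~\ref{lemma:simple}. Alternatively one can proceed by induction on $m$, the base case $m=0$ being ordinary \v{C}ech descent for a surjective submersion $X_0\to Y_0$ lifted simplicially, with the inductive step reducing to a subtler descent for the \v{C}ech nerve of $\pi$ viewed as an object over $Y$.
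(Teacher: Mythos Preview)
Your reduction in Step~1 via the relative coskeletal tower is the same as the paper's, and the observation that only finitely many $r_m$ matter at a fixed degree is correct.

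Step~2 is where the proposal diverges from the paper and also where the gap lies. The column argument is sound: for each $q$ the augmented \v{C}ech complex of the surjective submersion $f_q$ is exact (though the contraction comes from integration against a fiberwise density rather than a partition of unity in the usual sense), so the column filtration gives $H(\Tot C(W))\cong H(Y)$. The problem is the other direction. An ``extra degeneracy'' in the \v{C}ech index would require a simplicial section of $f$, which does not exist globally; the uniform fiber-product description from Lemma~\ref{lemma:simple} tells you that each row $W_{p,\bullet}$ has the same shape as $X$ with $X_m$ replaced by the $(p{+}1)$-fold fiber product $X_m^{\times_{Y_m}(p+1)}$, but this does not by itself furnish a contracting homotopy. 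The natural candidate---observing that each projection $W_{p,\bullet}\to X$ is again an $m$-simple hypercover and invoking the theorem---is circular, and the suggested ``induction on $m$'' does not break the circle because these projections remain $m$-simple. Your own ``Main obstacle'' paragraph is an honest admission that the key step is missing.

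The paper avoids this entirely by a direct geometric construction. After the same reduction to the $m$-simple case, it fixes a unital vertical density $\mu$ on $X_m\to Y_m$, which by Lemma~\ref{lemma:simple} induces compatible vertical densities on every $X_n\to Y_n$. Integration along the fiber then defines a chain map $\mu_*:C(X)\to C(Y)$ with $\mu_* f^*=\id$. To show $f^*\mu_*\simeq\id$, the paper introduces the relative path object $P(X/Y)$, equips the source map $s:P(X/Y)\to X$ with the induced vertical density, and uses the tautological homotopy between the source and target maps together with a Fubini computation to produce an explicit chain homotopy. This sidesteps the bisimplicial bookkeeping altogether: rather than proving two edge maps are quasi-isomorphisms, it builds the quasi-inverse and the homotopy by hand.
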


\begin{proof}
By the coskeleton tower of Proposition \ref{prop:cosk} we can assume that $f$ is simple. Fix $\mu^m$ an unital vertical density on $X_m\to Y_m$. This yields a system of vertical densities $\mu^n$ on $X_n\to Y_n$, $n\geq0$. If $n<m$ then $X_n\cong Y_n$ and $\mu^n$ is trivial. If $n>m$ then $\mu^n$ is the pullback vertical density of $\prod \mu^m$ on the cartesian square of  Lemma \ref{lemma:simple}:
$$\xymatrix@C=50pt{X_n \ar[d] \ar[r] & \prod X_m \ar[d] \\ 
Y_n \ar[r]_{y\mapsto \prod_\alpha \alpha^* y} & \prod Y_m}$$

Define $\mu_*:C^\infty(X_n)\to C^\infty(Y_n)$,
$\mu_*(\phi)(y) = \int_{f^{-1}(y)} \phi(x)\mu^n(x)$, the integration along the fiber. 
By Fubini's Theorem, $\mu_*d_i^*=d_i^*\mu_*$, so $\mu_*$ is a cochain map.
We claim that it is a quasi-inverse for $f^*$. Clearly  $\mu_*f^* = \id_{C^\infty(Y)}$. Let us show that
$f^*\mu_*$ and $\text{id}_{C^\infty(X)}$ are homotopic.


Write $PX$ for the {\bf path simplicial manifold}, whose $n$-simplices are given by $PX_n=X^{\Delta^n\times \Delta^1}$, and write 
$P(X/Y)=PX\times_{PY}Y$. Proposition \ref{prop:hom} shows that $PX_n$ is smooth and that $PX_n\to PY_n$ is a surjective submersion, so $PX$ and $P(X/Y)$ are well-defined and smooth. A simplex $\tilde x\in P(X/Y)_n$ is
given by a prism $\tilde x:\Delta^n\times I\to X$ such that $f\tilde x=y\pi:\Delta^n\times I\to Y$ for some $y\in Y_n$. Such a prism is given by its bottom face $x=\tilde x|_{\Delta^n\times 0}$ and the liftings of the non-degenerate $m$-simplices in $\Delta^n\times I\setminus\Delta^n\times 0$. In other words, we have a fibered product
$$\xymatrix{ P(X/Y)_n \ar[r] \ar[d]_s & \prod_{\substack{\alpha\in (\Delta^n\times I)_m\setminus\Delta^n\times 0\\\alpha\text{ non degenerate}}} X_m \ar[d] \\
X_n \ar[r] & \prod_{\substack{\alpha\in (\Delta^n\times I)_m\setminus\Delta^n\times 0\\\alpha\text{ non degenerate}}} Y_m}$$
We endow $s$ with the pullback vertical density induced by $\mu$, and define $\hat\mu_*:C(P(X/Y))\to C(X)$ by integration along the $s$-fibers. Clearly $\hat\mu_*s^*=\id:C(X)\to C(X)$. And by Fubini's Theorem,
\begin{align*}
\hat\mu_* t^*(\phi)(x) 
&=\int_{s^{-1}(x)} \phi(t(\tilde x)) d\mu(\tilde x)\\
&=\int_{f^{-1}(f(x))}\left(\int_{s^{-1}(x)\cap t^{-1}(x')} \phi(x') d\mu(\tilde x)\right)d\mu(x')\\
&=\int_{f^{-1}(f(x))} \phi(x') d\mu(x')=f^*\mu_*(\phi)(x) 
\end{align*}

The tautological simplicial homotopy $H:s\cong t:P(X/Y)\to X$ yields a chain homotopy $H^ *:s^*\cong t^*:C(X)\to C(P(X/Y))$. Explicitly, if $f\in C^n(X)$ and $\tilde x\in P(X/Y)_{n-1}$, then $H^*(f)(\tilde x)=\sum_j (-1)^j f(\tilde x P_j)$, where $P_j:\Delta^n\to\Delta^{n-1}\times I$, $P_j(i)=(i,0)$ if $i\leq j$, $P_j(i)=(i-1,1)$ if $i>j$. Finally $\hat\mu_* H^*:\hat\mu_*s=\id\cong\hat\mu_*t=f^*\mu_*:C(X)\to C(X)$, hence the result.
\end{proof}

\begin{corollary}\label{coro:cohomology-hypercover}
If $\phi:G\to H$ is a hypercover between Lie groupoids, and $V\to H$ is a higher vector bundle, then $\phi^*:C_{lin}(V)\to C_{lin}(\phi^*V)$ is a quasi-isomorphism.
\end{corollary}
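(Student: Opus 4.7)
The plan is to reduce the statement to Theorem \ref{thm:higher-descent} by means of the splitting furnished by the linearization map in Lemma \ref{lemma:splitting}.

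First I would verify that $\phi^*V\to V$ is itself a hypercover between higher Lie groupoids. The map $V\to H$ is a simplicial fibration, so composing with $H\to\ast$ (a fibration since $H$ is a Lie groupoid) and using the closure of fibrations under composition in Lemma \ref{lemma:nice-family}, one concludes that $V$ is a higher Lie groupoid; the same reasoning, via $G$, shows that $\phi^*V$ is a higher Lie groupoid. The pullback $\phi^*V=V\times_H G$ is exactly the base-change of the hypercover $\phi:G\to H$ along $V\to H$, and base-changes of hypercovers are hypercovers by Lemma \ref{lemma:nice-family}. Since the pullback of a simplicial vector bundle is again a simplicial vector bundle, $\phi^*V\to G$ is a higher vector bundle and the projection $\phi^*V\to V$ is fiberwise linear.

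Applying Theorem \ref{thm:higher-descent} to the hypercover $\phi^*V\to V$ yields that $\phi^*:C(V)\to C(\phi^*V)$ is a quasi-isomorphism on the full cochain complexes. To extract the linear part, I would observe that the linearization map $L(f)(v)=\odv{}{t}_{t=0}{f(tv)}$ is defined pointwise and $\phi^*V\to V$ is fiberwise linear, so $L$ commutes with $\phi^*$. Consequently $\phi^*$ respects the direct-sum decompositions $C(V)=C_{lin}(V)\oplus K_V$ and $C(\phi^*V)=C_{lin}(\phi^*V)\oplus K_{\phi^*V}$ of Lemma \ref{lemma:splitting}, acting diagonally. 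A diagonal map between direct sums of cochain complexes is a quasi-isomorphism if and only if each diagonal component is, so $\phi^*:C_{lin}(V)\to C_{lin}(\phi^*V)$ must itself be a quasi-isomorphism.

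There is no substantive obstacle: the corollary is a straightforward assembly of Theorem \ref{thm:higher-descent}, Lemma \ref{lemma:splitting}, and the stability properties of Lemma \ref{lemma:nice-family}. The only minor bookkeeping is to recognize that the base-change of a hypercover of Lie groupoids along the projection of a higher vector bundle lands in the framework of higher Lie groupoids and their hypercovers, which follows at once from the closure properties already established.
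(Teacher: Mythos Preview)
Your proof is correct and follows essentially the same approach as the paper: invoke Lemma \ref{lemma:nice-family} to see that $\phi^*V\to V$ is a hypercover, apply Theorem \ref{thm:higher-descent}, and then use the natural splitting of Lemma \ref{lemma:splitting} to extract the linear summand. You spell out a bit more carefully than the paper why $\phi^*$ respects the decomposition (via commutation with the linearization map), which is a welcome clarification but not a different strategy.
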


\begin{proof}
The morphism $\phi^*V\to V$ is a hypercover by Lemma \ref{lemma:nice-family}, and by Theorem \ref{thm:higher-descent}, it yields a quasi-isomorphism
$C(V)=C_{lin}(V)\oplus K_V \to C(\phi^*V)=C_{lin}(\phi^*V)\oplus K_{\phi^*V}$, where the natural splitting is that of Lemma \ref{lemma:splitting}. The result follows.
\end{proof}


\begin{remark}
The Bott-Shulman complex $\Omega(X)=\Omega^q(X_p)$ of a simplicial manifold is the double complex computing its de Rham cohomology $H_{dR}(X)$. The previous proof can be adapted to show that a hypercover $f:X\to Y$ also yields an isomorphism $H_{dR}(Y)\cong H_{dR}(X)$, see \cite{getzler}. If $f:X\to Y$ is an $m$-simple hypercover, then one can define local sections of $X_m\to Y_m$ whose images contain the degenerate simplices. This reduces the problem to $m$-simple hypercovers with $X_m\to Y_m$ an open cover. Then $\mu$ is just a partition of 1, and the integration along the fiber of differential forms makes sense, it is just pasting the local pieces via $\mu$.
\end{remark}

\frenchspacing

\footnotesize{

}

\

\noindent
Matias del Hoyo; Universidade Federal Fluminense, Departamento de Geometria; Rua Prof. Marcos W. de Freitas Reis s/n, campus Gragoatá, 24210-201 Niterói, RJ, Brazil; mldelhoyo@id.uff.br

\

\noindent
Cristian Ortiz; Universidade de São Paulo, Instituto de Matemática e Estatística; Rua do Matão 1010, 05508-090, São Paulo, SP, Brazil; cortiz@ime.usp.br

\

\noindent
Fernando Studzinski; Universidade de São Paulo, Instituto de Matemática e Estatística; Rua do Matão 1010, 05508-090, São Paulo, SP, Brazil; ferstudz@usp.br

\end{document}